\DeclareMathOperator{\Aut}{Aut}
\DeclareMathOperator{\Hom}{Hom}
\DeclareMathOperator{\Pic}{Pic}
\DeclareMathOperator{\Sym}{Sym}
\DeclareMathOperator{\conj}{conj}
\DeclareMathOperator{\Per}{Per}
\DeclareMathOperator{\Real}{Re}
\DeclareMathOperator{\sign}{sign}
\DeclareMathOperator{\adj}{adj}
\DeclareMathOperator{\rank}{rank}
\DeclareMathOperator{\Mat}{Mat}
\theoremstyle{plain}
\newtheorem{thm}{Theorem}[section]
\newtheorem{lem}[thm]{Lemma}
\newtheorem{prop}[thm]{Proposition}
\newtheorem{cor}[thm]{Corollary}
\theoremstyle{definition}
\newtheorem{dfn}[thm]{Definition}
\newtheorem{ex}[thm]{Example}
\newtheorem{rmk}[thm]{Remark}
\newtheorem*{ack}{Acknowledgements}
\begin{document}

\colorlet{LightGray}{gray!30}

\tikzstyle{circ}=[draw,circle,inner sep=1mm]
\tikzstyle{rect}=[draw,rectangle,minimum size=2mm]
\tikzstyle{cross}=[draw,cross out]
\tikzstyle{greycirc}=[draw,circle,fill=black!15,inner sep=1mm]
\tikzstyle{blackcirc}=[draw,circle,fill=black,inner sep=1mm]
\tikzstyle{c}=[draw,circle]

\tikzset{vert/.style={path picture={ 
      \draw[black]
       (path picture bounding box.north) -- (path picture bounding box.south) ;       
       }}}       
       
\tikzset{oplus/.style={path picture={ 
      \draw[black]
       (path picture bounding box.south) -- (path picture bounding box.north) 
       (path picture bounding box.west) -- (path picture bounding box.east);
      }}} 

\title{Hyperbolic Geometry and Moduli
of Real Curves of Genus Three}
\author{Gert Heckman and Sander Rieken}

\maketitle

\abstract{The moduli space of smooth real plane quartic curves consists of six connected components. We prove that each of these components admits a real hyperbolic structure. These connected components correspond to the six real forms of a certain hyperbolic lattice over the Gaussian integers. We will study this Gaussian lattice in detail. For the connected component that corresponds to maximal real quartic curves we obtain a more explicit description. We construct a Coxeter diagram that encodes the geometry of this component.}

\section{Introduction}


Recently there has been a great deal of progress in the construction of period maps from moduli spaces to ball quotients. This allows for a new approach to the study of questions of reality for these moduli spaces. The main example of this in the literature is the work of Allcock, Carlson and Toledo on the moduli space of cubic surfaces. In \cite{ACTcubiccomplex} they construct a period map from this moduli space to a ball quotient of dimension four. The question of reality for this period map is studied in \cite{ACTrealcubic}. One of the five connected components of this real moduli space, the one where all $27$ lines on the smooth real cubic surface surface are real, was previously studied by Yoshida \cite{YoshidaCubic} using the period map of \cite{ACTcubiccomplex}. The moduli space of real hyperelliptic curves of genus three has been studied by Chu \cite{Chu} using the period map of Deligne and Mostow \cite{DeligneMostow}. 

In this article we will focus mostly on smooth nonhyperelliptic curves of genus three. The canonical map of such a curve is an embedding onto a smooth plane quartic. For the moduli space of smooth plane quartic curves there is a period map due to Kondo \cite{Kondo1}. It maps the moduli space to a ball quotient of dimension six. We will study the question of reality for this period map. 

The classification of smooth real plane quartic curves is classical. The set of real points of such a curve consists of up to four ovals in the real projective plane. There are six possible configuration of the ovals. Each of them determines a connected component in the space of smooth real plane quartic curves. This is the projective space $P_{4,3}(\mathbb{R})=\mathbb{P}\Sym^4(\mathbb{R}^3)$ of dimension $14$ without the discriminant locus $\Delta(\mathbb{R})$, that represents singular quartics. Since the group $PGL_3(\mathbb{R})$ is connected, the moduli space
\[ \mathcal{Q}^\mathbb{R} =  PGL_3(\mathbb{R}) \backslash \left( P_{4,3}(\mathbb{R})-\Delta(\mathbb{R}) \right) \]
also consists of six components which we denote by $\mathcal{Q}_j^\mathbb{R}$ with $j=1,\ldots,6$. The correspondence between these components and the topological types of the set of real points of the curves is shown in Figure \ref{6concomp}.
\begin{table}
\centering
\begin{tabular}{m{1cm}m{1cm}m{1cm}m{1cm}m{1cm}m{1cm}}
\toprule
$\mathcal{Q}_1^\mathbb{R}$ & $\mathcal{Q}_2^\mathbb{R}$ & $\mathcal{Q}_3^\mathbb{R}$ &$\mathcal{Q}_4^\mathbb{R}$& $\mathcal{Q}_5^\mathbb{R}$& $\mathcal{Q}_6^\mathbb{R}$\\
\midrule
\begin{tikzpicture}\draw (45:10pt) circle [radius=5pt];\draw (135:10pt) circle [radius=5pt];\draw (-135:10pt) circle [radius=5pt];\draw (-45:10pt) circle [radius=5pt];\end{tikzpicture} &
\begin{tikzpicture}\draw (0:8pt) circle [radius=5pt];\draw (120:8pt) circle [radius=5pt];\draw (240:8pt) circle [radius=5pt];\end{tikzpicture} &
\begin{tikzpicture}\draw (0:7pt) circle [radius=5pt]; \draw (180:7pt) circle [radius=5pt];\end{tikzpicture} &
\begin{tikzpicture}\draw (0:0) circle [radius=5pt];\end{tikzpicture} &
\begin{tikzpicture}\draw (0,0) circle [radius=8pt];\draw (0,0) circle [radius=4pt];\end{tikzpicture}&$\emptyset $ 
 \\
\bottomrule
\end{tabular}
\caption{The topological types of representative curves $C(\mathbb{R})$ for the six components of $\mathcal{Q}_i^\mathbb{R} \subset \mathcal{Q}^\mathbb{R}$ for $i=1,\ldots,6$.}
\label{6concomp}
\end{table}

In this article we will prove that each of the components $\mathcal{Q}_j^\mathbb{R}$ is isomorphic to a divisor complement in an arithmetic real ball quotient. In order to formulate this more precisely we introduce some notation on Gaussian lattices. Let $\mathcal{G}=\mathbb{Z}[i]$ be the Gaussian integers and let $\Lambda_{1,6}$ be the Gaussian lattice $\mathcal{G}^7$ equipped with the Hermitian form $h(\cdot,\cdot)$ defined by the matrix
\begin{equation}\label{intromatrix} H = \begin{pmatrix} -2&1+i\\1-i&-2 \end{pmatrix}^{\oplus 3}\oplus (2). \end{equation}
We denote the group of unitary transformations of this lattice by $\Gamma=U(\Lambda)$. The lattice $\Lambda_{1,6}$ has hyperbolic signature $(1,6)$ and determines a complex ball of dimension six by the expression
\begin{equation} 
\mathbb{B}_6 = \mathbb{P} \{ z\in \Lambda_{1,6} \otimes_\mathcal{G} \mathbb{C} \ ; \ h(z,z)>0 \}. 
\end{equation}
A root is an element $r\in \Lambda_{1,6}$ such that $h(r,r)=-2$ and for every root $r$ we define its root mirror to be the hypersurface $H_r = \{ z\in \mathbb{B}_6 \ ; \ h(r,z)=0 \}$. We denote by $\mathbb{B}_6^\circ$ the complement in $\mathbb{B}_6$ of all root mirrors. Our main result is the following theorem.

\begin{thm}\label{introtheoremhyperbolic}There are six projective classes of antiunitary involutions $[\chi_j]$ with $j=1,\dots,6$ of the lattice $\Lambda_{1,6}$ up to conjugation by $P\Gamma$. Each of them determines a real ball $\mathbb{B}_6^{\chi_j}\subset \mathbb{B}_6$ and there are isomorphisms of real analytic orbifolds
\begin{equation}
\mathcal{Q}_j^\mathbb{R} \longrightarrow P\Gamma^{\chi_j} \backslash \left( \mathbb{B}_6^{\chi_j}  \right)^\circ.
\end{equation}
The group $P\Gamma^{\chi_j}$ is the stabilizer of the real ball $\mathbb{B}_6^{\chi_j}$ in $P\Gamma$. It is an arithmetic subgroup of $PO(\Lambda_{1,6}^{\chi_j})$ for each $j=1,\ldots,6$.
\end{thm}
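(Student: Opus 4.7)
The plan is to deduce Theorem~\ref{introtheoremhyperbolic} from Kondo's complex period map \cite{Kondo1}, which identifies the complex moduli space $\mathcal{Q}^{\mathbb{C}}$ with $P\Gamma\backslash\mathbb{B}_6^\circ$ by assigning to a smooth plane quartic $C$ the K3 surface $X\to\mathbb{P}^2$ given by the fourfold cyclic cover branched along $C$. The Gaussian action of $\mathcal{G}$ on the primitive transcendental cohomology of $X$ is induced by the deck transformation of order four, and Kondo's theorem is that the resulting period map is an isomorphism of complex analytic orbifolds. The extra input needed is that this construction is functorial with respect to complex conjugation: a real structure on $C$ lifts canonically to an antiholomorphic involution $\sigma$ on $X$ which conjugates the order-four deck transformation to its inverse, so the induced map on cohomology acts antilinearly on the Gaussian lattice structure and produces an antiunitary involution $\chi$ of $\Lambda_{1,6}$, well defined up to $P\Gamma$-conjugation once a marking is chosen.

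My first step is then to classify the projective classes of antiunitary involutions $[\chi]$ of $\Lambda_{1,6}$ up to $P\Gamma$. Since $\chi$ is an antilinear involution, its fixed part $\Lambda_{1,6}^{\chi}$ is a real $\mathbb{Z}$-lattice of signature $(1,6)$ and rank $7$, and together with the Gaussian structure on $\Lambda_{1,6}$ it determines $\chi$ completely, its $(-1)$-eigenspace being $i\Lambda_{1,6}^{\chi}$. Applying Nikulin's theory of primitive embeddings to the inclusion $\Lambda_{1,6}^{\chi}\oplus i\Lambda_{1,6}^{\chi}\hookrightarrow \Lambda_{1,6}$, viewed as a rank-$14$ integral lattice, the problem reduces to a finite enumeration of admissible discriminant-form gluings compatible with the Gaussian action. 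I expect this lattice-theoretic enumeration to be the main technical obstacle; it should produce exactly six conjugacy classes $[\chi_j]$. Matching each $[\chi_j]$ to the topological type $\mathcal{Q}_j^{\mathbb{R}}$ is then carried out by computing invariants of $\Lambda_{1,6}^{\chi_j}$ such as its rank and discriminant form and comparing them with topological invariants of the real K3 surface $X(\mathbb{R})\subset X(\mathbb{C})$, which in turn are read off from the configuration of ovals of $C(\mathbb{R})$ via the branched cover structure.

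Finally, the six isomorphisms of orbifolds will follow from equivariance of Kondo's period map. The period of any real quartic of topological type $j$ lies in the real ball $\mathbb{B}_6^{\chi_j}$, and conversely the complex Torelli statement together with the smoothness condition $z\notin \bigcup_r H_r$ produces from any period in $(\mathbb{B}_6^{\chi_j})^\circ$ a real K3 surface whose Gaussian quotient is a smooth real plane quartic of type $j$. That the resulting map
\begin{equation*}
\mathcal{Q}_j^\mathbb{R}\longrightarrow P\Gamma^{\chi_j}\backslash (\mathbb{B}_6^{\chi_j})^\circ
\end{equation*}
is a bijection of orbifolds uses that antiholomorphic automorphisms of $(X,\sigma)$ covering the identity on $\mathbb{P}^2$ contribute exactly the stabilizer $P\Gamma^{\chi_j}$, and that the Kondo map is already known to intertwine the relevant orbifold structures on the complex side. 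For arithmeticity, $P\Gamma^{\chi_j}$ preserves $\Lambda_{1,6}^{\chi_j}$ and therefore embeds into $PO(\Lambda_{1,6}^{\chi_j})$; since $\Lambda_{1,6}^{\chi_j}\oplus i\Lambda_{1,6}^{\chi_j}$ has finite index in $\Lambda_{1,6}$ as a $\mathbb{Z}$-lattice, a standard commensurability argument shows that this embedding has finite index, which yields arithmeticity.
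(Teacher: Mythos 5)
Your overall architecture (Kondo's period map plus real structures inducing antiunitary involutions) matches the paper, but your plan has two genuine gaps, and the first is at the heart of the count ``exactly six.'' You propose to classify the projective classes $[\chi]$ by a Nikulin-style enumeration of gluings for $\Lambda_{1,6}^{\chi}\oplus i\Lambda_{1,6}^{\chi}\hookrightarrow\Lambda_{1,6}$ and ``expect'' six classes; this step is left entirely unexecuted, and the invariants you name cannot finish it as stated. Concretely, the fixed lattice is \emph{not} a complete invariant: in the paper $\Lambda_{1,6}^{\chi_5}\cong\Lambda_{1,6}^{\chi_6}\cong (2)\oplus A_1^2\oplus D_4(2)$, yet $[\chi_5]$ and $[\chi_6]$ are non-conjugate, separated only by the unordered pair $(\Lambda^{\chi},\Lambda^{i\chi})$ (after scaling by $\tfrac12$, $\Lambda^{i\chi_5}$ is odd while $\Lambda^{i\chi_6}$ is even, Lemma \ref{ClassCriterion} and Theorem \ref{sixinvolutions}); conversely, even equality of all such lattice data would not yield $P\Gamma$-conjugacy without extension and surjectivity results over $\mathcal{G}$ that you would have to prove separately. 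The paper deliberately avoids this enumeration: it exhibits six pairwise distinct classes (distinguished by reduction modulo $(1+i)$ to involutions in $W(E_7)^+$ via Wall--Richardson, plus the fixed-lattice pair for $\chi_5,\chi_6$), and then gets the upper bound of six from the \emph{surjectivity of the real period map} together with the classical fact that $\mathcal{Q}^\mathbb{R}$ has six connected components (Proposition \ref{allsixinvolutions}). So in the paper the completeness of the classification is a consequence of the geometry, not an input to it; your route inverts this dependence and leaves its hardest step as a conjecture.

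The second gap is in your converse direction. That a period point $z\in(\mathbb{B}_6^{\chi_j})^\circ$ is fixed by $\chi_-$ only gives an antiunitary involution on $L_-\cong\Lambda_{1,6}$; it does not by itself make the corresponding K3 surface real. One needs an involution $\chi$ of the full K3 lattice $L$ restricting to $\chi_-$ whose fixed lattice $L^{\chi}$ is hyperbolic, so that Silhol's criterion (Theorem \ref{realK3}) applies. This is exactly the content of Lemma \ref{extend}, which requires the discriminant-form machinery: Nikulin's gluing (Proposition \ref{gluing}), the identification $O(A_{L_+})\cong W(E_7)$ (Proposition \ref{mapL+}), and the choice $\chi_+=(\pm u,-1)$ sending the canonical class $k\mapsto -k$ so that $L_+^{\chi_+}$ is negative definite. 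Your phrase ``the complex Torelli statement together with the smoothness condition produces a real K3 surface'' skips this entirely. Finally, a smaller but real misstatement: $P\Gamma^{\chi_j}$ does \emph{not} preserve $\Lambda_{1,6}^{\chi_j}$ in general, since elements of type II carry $\Lambda^{\chi}$ to $\Lambda^{i\chi}$; only the subgroup $P\Gamma^{\chi}_I$ of index at most two embeds in $PO(\Lambda_{1,6}^{\chi})$, and the paper obtains finite index by showing $P\Gamma^{\chi}_I$ contains a principal congruence subgroup $\{[M] \ ; \ M\equiv 1 \bmod{(\det B)}\}$ of $PO(\Lambda^{\chi})$ -- your ``standard commensurability argument'' can be repaired along these lines, but as written the embedding claim is false.
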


In fact we obtain more information on the lattices $\Lambda_{1,6}^{\chi_j}$ and the groups $P\Gamma^{\chi_j}$ for $j=1,\ldots,6$. They are finite index subgroups of hyperbolic Coxeter groups of finite covolume and we determine the Coxeter diagrams for these latter groups using Vinberg's algorithm. 

For the group $P\Gamma^{\chi_1}$ that corresponds to the component $\mathcal{Q}_1^\mathbb{R}$ of maximal quartic curves we obtain a very explicit description: it is the semidirect product of a hyperbolic Coxeter group of finite covolume by its group of diagram automorphisms. The fundamental domain of this Coxeter group is a convex hyperbolic polytope $C_6$ whose Coxeter diagram is shown in Figure \ref{CoxDiag}. Its group of diagram automorphisms is the symmetric group $S_4$. The locus of fixed points in $C_6$ of this group is a hyperbolic line segment. It corresponds to a pencil of smooth real quartic curves that was previously studied by W.L. Edge \cite{Edge}. It consist of four ovals with an $S_4$-symmetry and we determine this family explicitly.

The walls of the polyhedron $C_6$ represent either singular quartics or hyperelliptic curves. The Coxeter diagram $C_5$ of the wall representing hyperelliptic curves is shown in Figure \ref{CoxDiag} on the right. It is the Coxeter diagram that corresponds to the connected component of the moduli space of real binary octics where all eight points are real. This component is described by Chu in \cite{Chu}. We complement this work by explicitly computing the Coxeter diagram of $C_5$. The automorphism group of this diagram is isomorphic to $D_8$ and there is a unique fixed point in $C_5$. It correspond to the isomorphism class of the binary octic where the zeroes are image of the eighth roots of unity under a Cayley transform $z \mapsto i(1-z)/(1+z)$.  

\begin{ack}
The results of this article are contained in the PhD thesis of the second author. This research was supported by NWO free competition grant number 613.000.909. The authors would like to thank Professor Allcock and Professor Kharlamov for useful comments. 
\end{ack}

\begin{figure}[H]
\centering
\begin{tabular}{m{8cm}m{3cm}}
\begin{tikzpicture}[scale=0.8]
    \node[c] (1) at ( 0,0) {};
    \node[c]             (2) at ( 6,0) {};    
    \node[c,] (3) at ( 8,0) {};
    \node[c]            (4) at ( 5.5,-1.25) {};
    \node[c] (5) at ( 4,-2) {};
    \node[c]             (6) at ( 2,-1) {};    
    \node[c] (7) at ( 4,4) {};
    \node[c]             (8) at ( 4,2) {};    
    \node[c]             (9) at ( 6.25,1.75) {};
    \node[c]             (10) at (1.5,1.5) {};
    
    \node[c] (x1) at (4.5,1.5) {};
    \node[c] (x2) at (4.5,0.5) {};
    \node[c] (x3) at (3,0.5) {};
    
    \draw [-,double distance=2pt] (1) -- (2) -- (3);
    
    \draw [-,ultra thick] (x1) -- (x2) -- (x3) -- (x1);
    \draw [-,ultra thick] (2) -- (x1) -- (8);
    \draw [-,ultra thick] (4)  --(x3) -- (10);
    \draw [-,draw=white,line width=4pt] (6) -- (x2) -- (9);
    
    \draw [-,ultra thick] (6) -- (x2) -- (9);

    \draw [-,draw=white,line width=5pt] (5) -- (8) -- (7);
    \draw [-,double distance=2pt] (5) -- (8) -- (7)  ;
    
    \draw [-,double distance=2pt] (5) -- (4) -- (3) ;
    \draw [-,double distance=2pt] (5) -- (6) -- (1);
    \draw [-,double distance=2pt] (1) -- (10) -- (7);
    \draw [-,double distance=2pt] (3) -- (9) -- (7);   
\end{tikzpicture}%
&
\begin{tikzpicture}
    \node[c] (1) at ( 0,-1) {};
    \node[c] (2) at ( 1,-1) {};    
    \node[c] (3) at ( 2,-1) {};
    \node[c] (4) at ( 2,0) {};
    \node[c] (5) at ( 2,1) {};
    \node[c] (6) at ( 1,1) {};
    \node[c] (7) at ( 0,1) {};
    \node[c] (8) at ( 0,0) {}; 
    \draw [-,double distance = 2pt] (1) -- (2) -- (3) -- (4) -- (5) -- (6) -- (7) -- (8) -- (1);
\end{tikzpicture}
\end{tabular}
\caption{The Coxeter diagram of the hyperbolic Coxeter polytope $C_6$ (left) and the wall that corresponds to $C_5$ (right). The nodes represent the walls and a double edge connecting two nodes means their walls meet at an angle of $\pi/4$, a thick edge means they are parallel and no edge means they are orthogonal.}
\label{CoxDiag}
\end{figure}
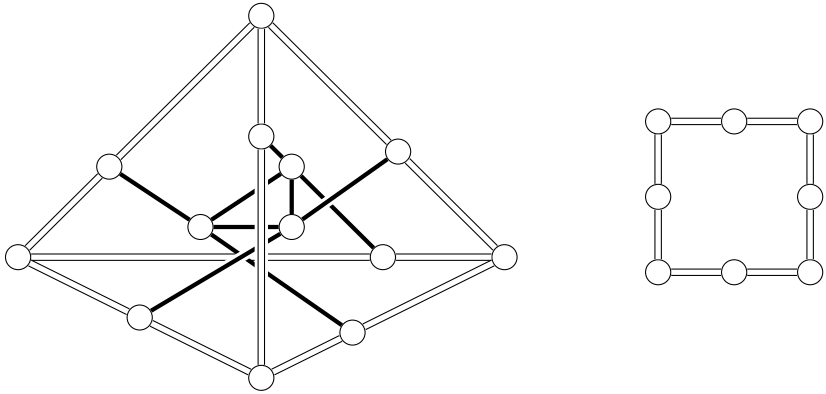

\section{Lattices}\label{Lattices}

A lattice is a pair $(L,(\cdot,\cdot))$ with $L$ a free $\mathbb{Z}$-module of finite rank $r$ and $(\cdot,\cdot)$ a nondegenerate, symmetric bilinear form on $L$ taking values in $\mathbb{Z}$. This bilinear form extends naturally to a bilinear form $(\cdot,\cdot)_\mathbb{Q}$ on the rational vector space $L\otimes_\mathbb{Z} \mathbb{Q}$ and its signature $(r_+,r_-)$ is called the signature of $L$. The dual of $L$ is the group $L^\vee=\Hom(L,\mathbb{Z})$ and the lattice $L$ is naturally embedded in $L^\vee$ by the assignment $x\mapsto (x,\cdot)$.  The group $L^\vee$ is naturally embedded in the vector space $L\otimes_\mathbb{Z} \mathbb{Q}$ by the identification
\[ L^\vee = \{ x\in L\otimes_\mathbb{Z} \mathbb{Q} \ ; \ (x,y)_\mathbb{Q} \in \mathbb{Z} \text{ for all } y\in L \}.\]
Note that the induced bilinear form on $L^\vee$ need not be integer valued, but by abuse of language we still call $L^\vee$ a lattice. An isomorphism between lattices $L_1$ and $L_2$ is a group isomorphism $\phi:L_1\rightarrow L_2$ that preserves the bilinear forms of $L_1$ and $L_2$. If $\{e_1,\ldots,e_r\}\subset L$ is a basis for $L$ then the matrix 
\[B=\begin{pmatrix}(e_1,e_1)&\cdots& (e_1,e_r)\\
\vdots &\ddots & \vdots \\
(e_r,e_1)&\cdots& (e_r,e_r)\end{pmatrix}\] 
is called the Gram matrix. Its determinant $d(L)$ is an invariant called the discriminant of the lattice. A lattice is called unimodular if $d(L)=\pm 1$ or equivalently if $L^\vee=L$. A lattice $L$ is called even if $(x,x)\in 2\mathbb{Z}$ for all $x\in L$, otherwise it is called odd.  We denote the automorphism group of a lattice $L$ by $O(L)$. An important class of automorphisms of a lattice $L$ of signature $(r_+,r_-)$ with $r_+ \leq 1$ is given by its reflections. For $r \in L$ primitive (that is $q\cdot r \in L$ for $q\in \mathbb{Q}$ only if $q\in \mathbb{Z}$) and of negative norm $(r,r)$ we define the reflection $s_r$ in $r$ by the formula
\begin{equation}s_r(x) = x-2\frac{(r,x)}{(r,r)}r.\end{equation}
This reflection is an automorphism of the lattice $L$ if and only if $2(r,x)\in (r,r)\mathbb{Z}$ for all $x\in L$. In that case we call the negative norm vector $r$ a root in $L$. Since conjugation by an element of $O(L)$ of a reflection is again a reflection, the reflections in roots generate a normal subgroup $W(L)\triangleleft O(L)$. 

Let $L$ be an even lattice. The quotient $A_L=L^\vee / L$ is called the discriminant group of $L$. It is a finite abelian group of order $d(L)$. We denote the minimal number of generators of $A_L$ by $l(A_L)$. If $A_L\cong (\mathbb{Z}/2\mathbb{Z})^a$ for some $a\in \mathbb{N}$ then $L$ is called $2$-elementary.    

\begin{prop}[Nikulin \cite{NikulinSym}, Thm. 3.6.2]\label{2invariants}An indefinite, even $2$-elementary lattice with $r_+>0$ and $r_->0$ is determined up to isomorphism by the invariants $(r_+,r_-,a,\delta)$. The invariant $\delta$ is defined by
\[ \delta = 
\begin{cases}0 & \text{ if } (x,x)_\mathbb{Q} \in \mathbb{Z} \text { for all } x\in L^\vee \\
1 & \text{ else }\end{cases}
\]
\end{prop}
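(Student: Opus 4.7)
The plan is to proceed via discriminant forms. For an even lattice $L$ the discriminant bilinear form on $A_L$ refines to a quadratic form $q_L\colon A_L \to \mathbb{Q}/2\mathbb{Z}$ defined by $q_L(x+L) = (x,x)_\mathbb{Q} \bmod 2\mathbb{Z}$. When $L$ is $2$-elementary the form $q_L$ takes values in $\tfrac12\mathbb{Z}/2\mathbb{Z}$, and the invariant $\delta$ records exactly whether all of these values already lie in $\mathbb{Z}/2\mathbb{Z}$. The strategy is then to combine this reduction with Nikulin's general uniqueness theorem: an indefinite even lattice $L$ with $\rank L \geq l(A_L) + 2$ is determined up to isomorphism by its signature $(r_+,r_-)$ together with the discriminant quadratic form $q_L$.

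First I would classify even $2$-elementary quadratic forms on $(\mathbb{Z}/2)^a$. The basic rank-one building blocks are the forms $w_\pm$ with $q(1)=\pm \tfrac12$ (these account for the $\delta=1$ contributions, coming from $\langle \pm 2\rangle$), together with the two rank-two $\delta=0$ forms $u$ and $v$ arising as discriminant forms of $U(2)$ and $D_4$ respectively. Using the standard stable relations (for instance $w_+\oplus w_-\cong $ a form equivalent to $u$ up to signature, and $w_\pm^{\oplus 4}\cong u\oplus v$) one reduces every such form to exactly one of a short canonical list, and that list is parametrized precisely by the pair $(a,\delta)$.

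For existence of a lattice realizing a prescribed admissible quadruple $(r_+,r_-,a,\delta)$ I would write down explicit orthogonal sums built from $U$, $E_8(\pm 1)$, $U(2)$, $D_4$, and $\langle \pm 2\rangle$: a few case distinctions on $\delta$ and the parity of $a$ suffice, and the signature is adjusted freely by copies of $U$ and $E_8(\pm 1)$, which contribute nothing to $A_L$. For uniqueness, Nikulin's theorem applies whenever $r_++r_-\geq a+2$; the borderline small-rank cases where $\rank L$ equals $a$ or $a+1$ are handled by a direct inspection, noting that under the indefiniteness hypothesis only a handful of lattices can arise and they are visibly distinguished by $(r_+,r_-,a,\delta)$.

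The principal obstacle is the bookkeeping in the second step: producing all the stable relations between $w_\pm$, $u$, and $v$ and verifying that after applying them $\delta$ really is a complete invariant of the residual equivalence class. One must in particular check that $\delta$ is intrinsic to $q_L$ (hence an isomorphism invariant of $L$), and that two even $2$-elementary discriminant forms with the same $(a,\delta)$ coincide rather than merely share numerical data. Combined with the borderline check for Nikulin's rank inequality, this is where the real content of the proposition lies; the rest is a careful case analysis.
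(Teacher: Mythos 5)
First, a point of orientation: the paper gives no proof of Proposition \ref{2invariants} at all --- it is imported verbatim from Nikulin --- so the only benchmark is Nikulin's original argument, whose broad strategy (classify $2$-elementary discriminant quadratic forms, then invoke uniqueness results for indefinite even lattices) your sketch does follow. But two of your concrete steps fail. The sample relations you propose are false, and they are refuted by the very invariant $\delta$ at issue: $\delta$ is intrinsic to the finite quadratic form ($\delta=0$ exactly when $q$ takes values in $\mathbb{Z}/2\mathbb{Z}\subset\mathbb{Q}/2\mathbb{Z}$), the forms $w_\pm$ (value $\pm\tfrac12$ on the generator) have $\delta=1$, while $u$ and $v$ have $\delta=0$; hence $w_+\oplus w_-$ is not isomorphic to $u$ in any sense, and $w_\pm^{\oplus 4}\not\cong u\oplus v$. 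Correct relations in this calculus are, for instance, $u\oplus w_\epsilon\cong w_\epsilon^{\oplus 2}\oplus w_{-\epsilon}$ and $v\oplus w_\epsilon\cong w_{-\epsilon}^{\oplus 3}$. More importantly, the reduced canonical list is \emph{not} parametrized by the pair $(a,\delta)$: one has $w_+\not\cong w_-$ and $u\not\cong v$, although each pair shares $(a,\delta)=(1,1)$ and $(2,0)$ respectively. The missing third invariant is the signature mod $8$ (Brown/Milgram), and the proposition is saved only by Milgram's formula $\sigma(q_L)\equiv r_+-r_-\pmod{8}$, which shows that this residue is already determined by $(r_+,r_-)$. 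Without that step your claim that $\delta$ ``really is a complete invariant of the residual equivalence class'' is simply false, so the genus computation --- the part you dismissed as bookkeeping --- has to be redone with the correct invariant triple $(a,\delta,\sigma \bmod 8)$.

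Second, the borderline ranks are not a finite inspection. For $\rank L=a$ the argument does close up: $L=2L^\vee$ forces $L\cong M(2)$ with $M$ unimodular and indefinite, such $M$ are classified by signature and parity, and parity corresponds to $\delta$. But for $\rank L=a+1$ there is no visible ``handful'': for every admissible quadruple $(r_+,r_-,a,\delta)$ in this range one must still prove that the genus contains a single isomorphism class, and the uniqueness theorem you quote is unavailable precisely because it requires $\rank L\geq l(A_L)+2$. Nikulin's treatment of these boundary cases is the genuinely hard part of his Theorem 3.6.2 (via spinor genera, Eichler's theorem for indefinite lattices of rank at least $3$, and refinements of his results in \S 1.13), and your proposal, as you concede in your last paragraph, defers exactly this. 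So the outline identifies the right road map, but with the false relations, the missing mod-$8$ invariant, and the unproven boundary uniqueness it does not yet amount to a proof.
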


The discriminant quadratic form $q_L$ on $A_L$ takes values in $\mathbb{Q}/2\mathbb{Z}$ and is defined by the expression
\begin{align*}
q_L(x+L)  &\equiv (x,x)_\mathbb{Q} \bmod{2\mathbb{Z}} \quad \text{for } x\in L^\vee. \\
\end{align*} 
The group of automorphisms of $A_L$ that preserve the discriminant quadratic form $q_L$ is denoted by $O(A_L)$ and there is a natural homomorphism: $O(L)\rightarrow O(A_L)$. If $\phi_L \in O(L)$ then we denote by $q(\phi_L)\in O(A_L)$ the induced automorphism of $A_L$. 

\begin{thm}[Nikulin, \cite{NikulinSym}, Thm. 3.6.3]\label{Nik363}Let $L$ be an even, indefinite $2$-elementary lattice. Then the natural homomorphism $O(L) \rightarrow O(A_L)$ is surjective.
\end{thm}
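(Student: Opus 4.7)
The plan is to combine Proposition \ref{2invariants} with an explicit lifting construction: use the classification to put $L$ into a standard model, pick a manageable generating set for $O(A_L)$, and then realise each generator by an automorphism of $L$.

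First, by Proposition \ref{2invariants} I may fix an explicit model $L$ as an orthogonal sum of standard pieces: copies of $U$, $U(2)$, $\langle \pm 2 \rangle$, $E_8$, and possibly $E_8(2)$, with multiplicities chosen to realise the prescribed invariants $(r_+, r_-, a, \delta)$. Because $L$ is indefinite I can arrange that the decomposition contains at least one hyperbolic summand $U$; this summand is the source of flexibility that will be exploited repeatedly when modifying representatives in $L^\vee$.

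Second, I translate the problem into a question on $\mathbb{F}_2$. Since $L$ is $2$-elementary, $A_L \cong \mathbb{F}_2^a$ and $q_L$ is a non-degenerate $\mathbb{Q}/2\mathbb{Z}$-valued quadratic form on this $\mathbb{F}_2$-vector space; $O(A_L)$ is its orthogonal group. By the standard generation theorems for orthogonal groups over $\mathbb{F}_2$ (Dieudonn\'e), this group is generated by transvections $\bar{x} \mapsto \bar{x} + B(\bar{v}, \bar{x})\,\bar{v}$ attached to anisotropic classes $\bar{v} \in A_L$, together with swaps of hyperbolic pairs in the $\delta = 0$ case. I would first recall or reprove this generation statement and then lift the generators one at a time.

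Third, I produce the lifts. Permutation isometries of the model decomposition lift every swap of isomorphic summands to an honest element of $O(L)$ inducing the corresponding automorphism of $A_L$. For a transvection attached to $\bar{v}$, I pick a representative $v \in L^\vee$. Using that $L$ is $2$-elementary we have $2L^\vee \subseteq L$, so $r = 2v \in L$. Modifying $v$ by a vector in $L$ coming from the hyperbolic summand $U$, I can arrange $(v,v)$ to be a negative integer small enough that the integrality condition $2(r,x) \in (r,r)\mathbb{Z}$ holds for all $x \in L$; the resulting reflection $s_r \in O(L)$ acts on $A_L$ precisely as the transvection in $\bar{v}$, since $s_r(x) - x \in \mathbb{Z} \cdot v$ computes to $B(\bar v, \bar x)\,\bar v$ modulo $L$.

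The main obstacle is this last adjustment: when $\delta = 1$ not every class of $A_L$ admits a representative of integral norm, so for certain $\bar v$ the construction of $s_r$ must be replaced either by a composition of two such reflections whose non-integral parts cancel, or by combining the reflection with an auxiliary symmetry of the hyperbolic summand. Carrying this out uniformly across the four families of invariants is the technical core of the proof; once done, Dieudonn\'e's generation of $O(A_L)$ yields surjectivity of $O(L) \to O(A_L)$.
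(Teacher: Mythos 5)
You should first be aware that the paper does not prove this statement at all: it is imported verbatim from Nikulin (Thm.\ 3.6.3 of \cite{NikulinSym}), whose argument runs through the classification of finite quadratic forms and $2$-adic lattices, not through an explicit model with lifted reflections. So your sketch has to stand on its own, and it does not, for a structural reason: the opening claim that indefiniteness lets you ``arrange that the decomposition contains at least one hyperbolic summand $U$'' is false. Any lattice $M(2)$ with $M$ even unimodular indefinite --- e.g.\ $U(2)$, $U(2)\oplus U(2)$, or $U(2)\oplus E_8(2)$, which occurs in this very paper as the fixed lattice with invariants $(10,10,0)$ --- is even, indefinite and $2$-elementary, yet every vector has norm divisible by $4$, so no $U$ summand exists. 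These are exactly the cases where your norm-adjustment mechanism for representatives $v\in L^\vee$ has no room to operate, and where surjectivity amounts to the nontrivial classical fact that $O(M)\rightarrow O(M/2M,q)$ is onto (for instance $W(E_8)\twoheadrightarrow O_8^+(\mathbb{F}_2)$); that is the technical core of the theorem, and your proposal explicitly defers it.

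The group-theoretic reduction is also misaligned. The form $q_L$ takes values in $\tfrac{1}{2}\mathbb{Z}/2\mathbb{Z}$, not in $\mathbb{F}_2$, so Dieudonn\'e's theorem does not apply as stated: $O(q_L)$ is a subgroup of $O(b_L)$, and a transvection $\bar{w}\mapsto \bar{w}+b(\bar{v},\bar{w})\bar{v}$ preserves $q_L$ if and only if $q_L(\bar{v})\equiv 1 \bmod 2\mathbb{Z}$, since $q(\bar{w}+\bar{v})=q(\bar{w})+q(\bar{v})+2b(\bar{v},\bar{w})$. Consequently the classes with $q=\pm\tfrac{1}{2}$ that you single out as the problematic ones do not define elements of $O(q_L)$ at all (indeed, reflections in norm $\pm 2$ vectors of $L$ act trivially on $A_L$, as the paper itself notes in the proof of Proposition \ref{mapL+}), while the generators you actually need are the transvections in $q\equiv 1$ classes, lifted by reflections in $r=2v$ with $(v,v)=\pm 1$ --- note the integrality condition $2(r,x)\in(r,r)\mathbb{Z}$ forces $(v,v)=\pm 1$ exactly, not merely ``small.'' Two substantive statements then remain unproven: that such transvections (plus whatever auxiliary elements) generate $O(q_L)$ --- a genuine issue, since even for $\mathbb{F}_2$-valued forms Dieudonn\'e's theorem has its unique exception at $O_4^+(\mathbb{F}_2)$, which is precisely $O(q_L)$ for $L=U(2)\oplus U(2)$ --- and that each relevant class contains a representative of norm $\pm 1$, a coset-representation fact requiring genuine lattice-theoretic input. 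As written, the proposal is a plausible plan with the hard steps missing, not a proof.
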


\begin{prop}[Nikulin, \cite{NikulinSym}, Prop. 1.6.1]\label{gluing}Let $L$ be an even unimodular lattice and $M$ a primitive sublattice of $L$ with orthogonal complement $M^\perp=N$. There is a natural isomorphism $\gamma:A_M\rightarrow A_N$ for which $q_N\circ \gamma = -q_M$. Let $\phi_M \in O(M)$ and $\phi_N\in O(N)$. The automorphism $(\phi_M , \phi_N)$ of $M\oplus N$ extends to $L$ if and only if $q(\phi_N) \circ \gamma =\gamma \circ q(\phi_M)$. 
\end{prop}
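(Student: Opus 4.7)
The plan is to construct $\gamma$ as the graph of the natural overlattice $L/(M\oplus N)$ sitting inside $A_M\oplus A_N$, and then to interpret the extension criterion as preservation of this graph. I would proceed in four steps.

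First, I would build the chain $M\oplus N \hookrightarrow L \hookrightarrow M^\vee \oplus N^\vee$, using that the unimodularity of $L$ identifies $L$ with $L^\vee$, and that every $\ell\in L$ gives orthogonal projections $\pi_M(\ell)\in M^\vee$ and $\pi_N(\ell)\in N^\vee$ via $m\mapsto(\ell,m)$ and $n\mapsto(\ell,n)$. The kernel of $\pi_M:L\to M^\vee$ is exactly $N=M^\perp$. I would then argue surjectivity of $\pi_M$: because $M$ is primitive, $L/M$ is torsion free, so applying $\Hom(-,\mathbb{Z})$ to $0\to M\to L\to L/M\to 0$ yields the short exact sequence $0\to(L/M)^\vee\to L^\vee\to M^\vee\to 0$, which under $L=L^\vee$ is $0\to N\to L\to M^\vee\to 0$. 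Composing with $M^\vee\twoheadrightarrow A_M$ and noting $\pi_M^{-1}(M)=M+N=M\oplus N$ (the sum is direct because the forms are nondegenerate on $M$ and $N$), I get an isomorphism $H:=L/(M\oplus N)\xrightarrow{\sim}A_M$, and symmetrically $H\xrightarrow{\sim}A_N$. Thus $H\subset A_M\oplus A_N$ is the graph of an isomorphism $\gamma\colon A_M\to A_N$.

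Next I would verify $q_N\circ\gamma=-q_M$. For $[x]\in A_M$ with $x\in M^\vee$, its preimage in $H$ is $[x+y]$ with $y\in N^\vee$ and $x+y\in L$; since $M\perp N$ rationally,
\begin{equation*}
(x+y,x+y)_\mathbb{Q}=(x,x)_\mathbb{Q}+(y,y)_\mathbb{Q},
\end{equation*}
and the left-hand side lies in $2\mathbb{Z}$ because $L$ is even. Reducing modulo $2\mathbb{Z}$ gives $q_M([x])+q_N(\gamma([x]))\equiv 0$, as required.

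For the extension statement, note that $\phi_M$ and $\phi_N$ extend automatically to $M^\vee$ and $N^\vee$ by duality, so $(\phi_M,\phi_N)$ acts on $M^\vee\oplus N^\vee$ preserving the subgroup $M\oplus N$. Hence $(\phi_M,\phi_N)$ extends to $L$ if and only if it preserves $L/(M\oplus N)=H$ inside $A_M\oplus A_N$. The induced action on $A_M\oplus A_N$ is $(q(\phi_M),q(\phi_N))$, and preservation of the graph $\{(a,\gamma(a))\,:\,a\in A_M\}$ reads $q(\phi_N)(\gamma(a))=\gamma(q(\phi_M)(a))$ for all $a$, which is exactly $q(\phi_N)\circ\gamma=\gamma\circ q(\phi_M)$.

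The only place where care is needed is the argument that $\pi_M\colon L\to M^\vee$ is surjective; this relies on both unimodularity of $L$ and primitivity of $M$, and it is the real engine of the proposition. Everything else is formal manipulation of the commutative diagram of lattices and their duals.
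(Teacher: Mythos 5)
Your proof is correct: the paper itself gives no argument for this statement (it is imported verbatim from Nikulin), and your glue-group construction --- realizing $H=L/(M\oplus N)$ as the graph of $\gamma$ inside $A_M\oplus A_N$, reading off $q_N\circ\gamma=-q_M$ from evenness of $L$, and translating extendability of $(\phi_M,\phi_N)$ into preservation of that graph --- is exactly the standard argument underlying Nikulin's Proposition 1.6.1. You also correctly isolate the one genuinely nontrivial step, the surjectivity of $\pi_M\colon L\to M^\vee$ via the dualized sequence $0\to N\to L\to M^\vee\to 0$, which is precisely where unimodularity of $L$ and primitivity of $M$ enter.
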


\begin{thm}[Nikulin, \cite{NikulinSym}, Thm. 1.14.4]\label{primembed}Let $M$ be an even lattice of signature $(s_+,s_-)$ and let $L$ be an even unimodular lattice of signature $(r_+,r_-)$. There is a unique primitive embedding of $M$ into $L$ provided the following hold:
\begin{enumerate}
\item $s_+<r_+$
\item $s_-<r_-$
\item $l(A_M) \leq \rank(L) - \rank(M)-2$
\end{enumerate}
\end{thm}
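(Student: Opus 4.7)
The plan is to use Proposition \ref{gluing} as the structural backbone: a primitive embedding of $M$ into $L$ is equivalent to specifying an even lattice $N$ of signature $(r_+-s_+, r_- - s_-)$ together with an anti-isometry of discriminant forms $\gamma\colon A_M\to A_N$ satisfying $q_N\circ \gamma = -q_M$. The two pieces of the theorem then become (a) existence of such an $(N,\gamma)$, and (b) uniqueness up to the $O(L)$-action on embeddings, which via \ref{gluing} will be reduced to extending automorphisms across the gluing.

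For existence, I would first note that hypotheses 1 and 2 force the prospective signature $(r_+-s_+, r_--s_-)$ of $N$ to be strictly indefinite, and in particular $\rank(N)=\rank(L)-\rank(M)\geq l(A_M)+2$ by hypothesis 3. Under exactly this rank condition, Nikulin's existence theorem for even lattices with prescribed discriminant form (Theorem 1.10.1 of \cite{NikulinSym}) produces an even lattice $N$ of the required signature whose discriminant form is $-q_M$; any isomorphism $\gamma\colon A_M\to A_N$ identifying $q_N$ with $-q_M$ then yields a primitive embedding $M\hookrightarrow L$ via \ref{gluing}.

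For uniqueness, suppose we have two primitive embeddings with complements $N_1$ and $N_2$. Both have the same signature and the same discriminant form (up to sign), so they lie in a common genus. The indefiniteness of $N$ combined with $l(A_N)=l(A_M)\leq \rank(N)-2$ places us squarely in the range where Nikulin's uniqueness-in-the-genus result applies, giving an isometry $\psi\colon N_1\to N_2$. Transporting $N_1$ to $N_2$ by $\psi$, the two embeddings now share both $M$ and $N$, and differ only by the gluing datum: the two isomorphisms $\gamma_1,\gamma_2\colon A_M\to A_N$ differ by some $\beta=\gamma_2\gamma_1^{-1}\in O(A_N)$ (which automatically preserves $q_N$). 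Invoking surjectivity of $O(N)\to O(A_N)$, we lift $\beta$ to an isometry $\phi_N\in O(N)$, and by \ref{gluing} the pair $(\mathrm{id}_M,\phi_N)$ extends to an element of $O(L)$ conjugating one embedding to the other.

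The main obstacle is that the three facts this argument leans on — existence of an even lattice with prescribed indefinite signature and discriminant form, uniqueness in the genus for indefinite lattices of sufficient rank, and surjectivity of $O(N)\to O(A_N)$ in the general (not necessarily $2$-elementary) setting — all require the full strength of Nikulin's machinery, whose underpinnings are local-global strong approximation arguments over $\mathbb{Q}$ and a case-by-case $p$-adic analysis of discriminant forms. The hypothesis $l(A_M)\leq \rank(L)-\rank(M)-2$ is precisely the threshold that makes all three inputs simultaneously available, so the real content of the theorem lies not in the gluing bookkeeping sketched above but in verifying that this single numerical hypothesis suffices to trigger each of those deeper classification results.
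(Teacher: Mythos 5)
The first thing to note is that the paper does not prove this statement at all: it is imported verbatim as Theorem 1.14.4 of \cite{NikulinSym}, so there is no internal argument to compare yours against. That said, your sketch correctly reconstructs the standard derivation (essentially Nikulin's own): encode a primitive embedding by its orthogonal complement $N$ together with an anti-isometry $\gamma\colon A_M\to A_N$, obtain existence of $(N,\gamma)$ from the existence theorem for even lattices with prescribed signature and discriminant form, and obtain uniqueness from uniqueness-in-the-genus plus surjectivity of $O(N)\rightarrow O(A_N)$ (Theorem 1.14.2 of \cite{NikulinSym}); hypotheses 1--3 are exactly the blanket conditions making $N$ indefinite with $\rank(N)\geq l(A_N)+2$, so both of those inputs apply. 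Your uniqueness bookkeeping --- transport $N_1$ to $N_2$ by an isometry from the genus statement, absorb the discrepancy $\beta=\gamma_2\gamma_1^{-1}$ of gluing data by a lift through $O(N)\rightarrow O(A_N)$, and extend by Proposition \ref{gluing} --- is the right argument.

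Two steps are left implicit and should be flagged. First, Proposition \ref{gluing} as stated in the paper only goes from a primitive sublattice to the gluing datum; for existence you need the converse construction: form the overlattice $L'=\{(x,y)\in M^\vee\oplus N^\vee \ ; \ \gamma(\bar{x})=\bar{y}\}$, check that it is even and unimodular of signature $(r_+,r_-)$, and then \emph{identify} $L'$ with the given $L$. That identification is not free: it uses the fact that an indefinite even unimodular lattice is determined by its signature, and it is precisely hypotheses 1 and 2 (forcing $r_+\geq 1$ and $r_-\geq 1$) that make this available --- for definite $L$ the argument, and indeed the theorem, breaks at this step. Second, applying the existence theorem to produce $N$ with discriminant form $-q_M$ requires the signature congruence $\sign(-q_M)\equiv(r_+-s_+)-(r_--s_-)\pmod{8}$; this is automatic from Milgram's formula for $M$ together with $r_+-r_-\equiv 0\pmod{8}$ for even unimodular $L$, but it is one of the hypotheses of Nikulin's Theorem 1.10.1 and has to be verified rather than passed over. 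With these two points supplied, your outline is a complete reduction to Theorems 1.10.1 and 1.14.2 of \cite{NikulinSym}, which is, as your closing paragraph honestly says, where the real content of the theorem lives.
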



We denote by $L(n)$ the lattice $L$ where the bilinear form is scaled by a factor $n\in \mathbb{Z}$ and we write $U$ for the even unimodular hyperbolic lattice of rank $2$ with Gram matrix $\left( \begin{smallmatrix}0&1\\1&0\end{smallmatrix} \right)$. Furthermore we denote by $A_i,D_j,E_k$ with $i,j\in \mathbb{N}$, $j\geq 4$ and $k=6,7,8$ the lattices associated to the negative definite Cartan matrices of this type. For example
\[
A_2 = \begin{pmatrix}-2&1\\1&-2\end{pmatrix} \ ,  \ A_1\oplus A_1(2) = \begin{pmatrix}-2&0\\0&-4\end{pmatrix} \ , \ D_4 = \begin{pmatrix}-2&1&0&0\\1&-2&1&1\\0&1&-2&0\\0&1&0&-2\end{pmatrix}.
\]

Determining if two lattices are isomorphic can be challenging. In the following lemma we describe some isomorphic lattices that we will encounter frequently when studying Gaussian lattices.
\begin{lem}\label{simplifylattices}
There are isomorphisms of hyperbolic lattices
\begin{align}
(4) \oplus A_1 &\cong (2) \oplus A_1(2) \\
U(2) \oplus A_1  &\cong (2) \oplus A_1^2 \\
(2) \oplus A_1(2) \oplus D_4(2) &\cong (2) \oplus A_1^2 \oplus A_1(2)^3
\end{align}
\end{lem}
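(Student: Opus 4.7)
The plan is to prove each of the three isomorphisms by exhibiting an explicit change of basis; in each case the verification reduces to checking the new Gram matrix and that the transition matrix has determinant $\pm 1$. For (1), working in $(4) \oplus A_1$ with basis $e, f$ of norms $4$ and $-2$, I would take $v_1 = e + f$ and $v_2 = e + 2f$, which have norms $2$ and $-4$, are orthogonal, and whose transition matrix $\left(\begin{smallmatrix}1 & 1\\1 & 2\end{smallmatrix}\right)$ has determinant $1$. For (2), in $U(2) \oplus A_1$ with hyperbolic basis $e_1, e_2$ satisfying $(e_1, e_2) = 2$ and $f$ of norm $-2$, I would take $v_0 = e_1 + e_2 + f$, $v_1 = e_1 + f$, $v_2 = e_2 + f$, which are pairwise orthogonal of norms $2, -2, -2$ and whose transition matrix again has determinant $\pm 1$.

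For (3), the cleanest approach is to first establish
\[(2) \oplus A_1(2) \oplus D_4(2) \cong U(2) \oplus A_1 \oplus A_1(2)^3,\]
and then apply (2) to the first two summands on the right-hand side to reach the desired $(2) \oplus A_1^2 \oplus A_1(2)^3$. Realizing $D_4$ as the even sublattice of $\mathbb{Z}^4$ with standard basis $\epsilon_1, \dots, \epsilon_4$, and writing $g_0, h$ for the generators of $(2)$ and $A_1(2)$, I would propose the following six vectors: $x = 2g_0 + h + \epsilon_1 + \epsilon_2$ and $y = 2g_0 + h + \epsilon_1 + \epsilon_3$ (both isotropic with $(x, y) = 2$, spanning a copy of $U(2)$); $v = g_0 + h$ of norm $-2$ (an $A_1$); and $w_1 = 2g_0 + h + \epsilon_1 + \epsilon_2 + \epsilon_3 + \epsilon_4$, $w_2 = 2g_0 + h + \epsilon_1 + \epsilon_2 + \epsilon_3 - \epsilon_4$, $w_3 = 2g_0 + h + 2\epsilon_1$, each of norm $-4$ (three copies of $A_1(2)$), with all remaining inner products vanishing. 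A direct computation verifies these inner products, and one then checks that the change-of-basis matrix has determinant $\pm 1$.

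The main obstacle is identifying the right copy of $U(2) \oplus A_1$ inside the left-hand side of (3): several naive choices of an orthogonal norm $-2$ pair yield an orthogonal complement containing no vector of norm $2$, obstructed by a congruence modulo $4$. Once the vectors above are in hand, however, all the verifications are mechanical.
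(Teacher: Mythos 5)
Your proposal is correct, and the computations check out: with $g_0$, $h$, $\epsilon_i$ as you set them up, all the stated norms and inner products are as claimed, and the transition matrix from $\{x,y,v,w_1,w_2,w_3\}$ to the basis $\{g_0,h,\epsilon_1-\epsilon_2,\epsilon_2-\epsilon_3,\epsilon_3-\epsilon_4,\epsilon_3+\epsilon_4\}$ of $(2)\oplus A_1(2)\oplus D_4(2)$ has determinant $1$. In fact you can avoid computing that determinant altogether: your six vectors visibly lie in the lattice, and their Gram determinant $(-4)\cdot(-2)\cdot(-4)^3=-512$ equals the discriminant $2\cdot(-4)\cdot 64=-512$ of the ambient lattice, so they automatically span a sublattice of index $1$. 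Your route differs from the paper's in two of the three cases. For the first isomorphism both arguments are explicit base changes (the paper uses the columns of a slightly different unimodular matrix). For the second, the paper instead invokes Nikulin's classification of indefinite even $2$-elementary lattices (Proposition \ref{2invariants}): both sides have invariants $(r_+,r_-,a,\delta)=(1,2,3,1)$, hence are isomorphic; your explicit orthogonal basis $v_0,v_1,v_2$ is more elementary and constructive, at the cost of a small computation, while the paper's argument is shorter but leans on a classification theorem. For the third, the paper simply exhibits a single unimodular $6\times 6$ base-change matrix $B$ with no indication of its provenance; your factorization through the intermediate lattice $U(2)\oplus A_1\oplus A_1(2)^3$ — locating a $U(2)$ on the isotropic pair $x,y$ and then reusing the second isomorphism — is more structured, and in effect explains where such a matrix comes from. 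Your closing remark about naive choices failing is consistent with the fact that every norm in $A_1(2)\oplus D_4(2)$ is divisible by $4$, which constrains where norm $2$ and norm $-2$ vectors can sit; but since your final configuration works, that observation plays no logical role in the proof.
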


\begin{proof}
For the first isomorphism we explicitly determine a base change:
\[ \begin{pmatrix}1&-1\\-1&2\end{pmatrix}^t \begin{pmatrix}4&0\\0&-2\end{pmatrix} \begin{pmatrix}1&-1\\-1&2\end{pmatrix} =\begin{pmatrix}2&0\\0&-4\end{pmatrix}.\]
For the second isomorphism we calculate the invariants $(r_+,r_-,a,\delta)$ of Proposition \ref{2invariants}. They are easily seen to be $(1,2,3,1)$ for both lattices so that the lattices are isomorphic. The third isomorphism is the least obvious. We also determine an explicit base change:
\[ B^t \left( (2) \oplus A_1^2 \oplus A_1(2)^3 \right)B = (2) \oplus A_1(2) \oplus D_4(2) \]
where $B$ is the unimodular matrix:
\[ B = \begin{pmatrix}3&2&1&0&1&1\\-1&0&-1&1&-1&-1\\-1&0&0&-1&0&0\\-1&-1&0&0&0&-1\\-1&-1&0&0&-1&0\\-1&-1&-1&0&0&0\end{pmatrix}.\]
\end{proof}


\section{Hyperbolic reflection groups}\label{Vinberg}

Most of the results of this section can be found in \cite{VinbergAlg}. Let $L$ be a hyperbolic lattice of hyperbolic signature $(1,n)$. We can associate to $L$ the space $V=L\otimes_\mathbb{Z} \mathbb{R}$ with isometry group $O(V)\cong O(1,n)$. A model for real hyperbolic $n$-space $\mathbb{H}_n$ is given by one of the sheets of the two sheeted hyperboloid $\{ x\in V \ ; \ (x,x)=1 \}$ in $V$. Its isometry group is the subgroup $O(V)^+<O(V)$ of index two of isometries that preserves this sheet. Another model for $\mathbb{H}_n$ which we will use most of the time is the ball defined by
\[ \mathbb{B}_n = \mathbb{P}\{ x\in L\otimes_\mathbb{Z} \mathbb{R} \ ; \ (x,x)>0 \} \] 
whose isometry group is naturally identified with the group $O(\mathbb{B})\cong PO(1,n)$.
The group $O(L)^+=O(L)\cap O(V)^+$ is a discrete subgroup of $O(V)^+$ and it has finite covolume by a theorem of Siegel \cite{Siegel}. Let  $W(L)<O(L)^+$ be the normal subgroup generated by the reflections in roots of negative norm of $L$. We can write the group $O(L)^+$ as
\[O(L)^+ = W(L) \rtimes S(C)\]
where $C\subset \mathbb{B}_n$ is a fundamental chamber of $W(L)$ and $S(C)$ is the subgroup of $O(L)^+$ that maps $C$ to itself. The lattice $L$ is called reflective if $W(L)$ has finite index in $O(L)^+$. In this case $C$ is a hyperbolic polytope of finite volume which we assume from now on. We say that $\{r_i\}_{i\in I}$ with $I=\{1,\dots,k\}$ is a set of simple roots for $C$ if all pairwise inner products are nonnegative and $C$ is the polyhedron bounded by the mirrors $H_{r_i}$ so that
\begin{equation}\label{polyhedronC}
C = \{ x \in L\otimes_{\mathbb{Z}} \mathbb{R} \ ; \ (x,x)>0 \ ,\ (r_i,x) \geq 0 \text{ for } i=1,\ldots,k \} / \mathbb{R}_+.
\end{equation}
The root mirrors meet at dihedral angles $\pi/m_{ij}$ with $m_{ij} = 2,3,4,6$ or they are disjoint in $\mathbb{B}_n$. In this last case we say that two root mirrors $H_{r_i}$ and $H_{r_j}$ are parallel if they meet at infinity so that $m_{ij}=\infty$, or ultraparallel if they do not meet even at infinity. The matrix $G$ with entries $(r_i,r_j)_{i,j\in I}$ is called the Gram matrix of $C$ and in case two mirrors are not ultraparallel the $m_{ij}$ can be calculated from $G$ by the relation
\[ (r_i,r_j)^2 = (r_i,r_i)(r_j,r_j)\cos^2 \left( \frac{\pi}{m_{ij}} \right). \]
The polytope $C$ is described most conveniently by its Coxeter diagram $D_I$. This is a graph with $k$ nodes labeled by simple roots $\{r_i\}_{i\in I}$. Nodes $i$ and $j$ are connected by $4\cos^2(\pi/m_{ij})$ edges in case $m_{ij} < \infty$. If $m_{ij}=\infty$ we connect the vertices by a thick edge. In addition we connect two nodes by a dashed edge if their corresponding mirrors are ultraparallel. In the examples that come from Gaussian lattices we will only encounter roots of norm $-2,-4$ and $-8$ so we also subdivide the corresponding nodes into $0,2$ and $4$ parts respectively. These conventions are illustrated in Figure \ref{conv}.
\begin{figure}
\centering
\begin{tabular}{llll}
\begin{tikzpicture}[baseline=-2pt]\node[c] (1) at ( 0,0) {};\node[c] (2) at (1,0) {};\end{tikzpicture}& orthogonal intersection & \begin{tikzpicture}[baseline=-2pt]\node[c] (1) at (0,0) {};\end{tikzpicture} & $(r,r)=-2$ \\
\begin{tikzpicture}[baseline=-2pt]\node[c] (1) at ( 0,0) {};\node[c] (2) at (1,0) {};\draw[-] (1) -- (2); \end{tikzpicture} & interior angle $\pi/3$ & \begin{tikzpicture}[baseline=-2pt]\node[c,vert] (1) at (0,0) {};\end{tikzpicture} & $(r,r)=-4$\\
\begin{tikzpicture}[baseline=-2pt]\node[c] (1) at ( 0,0) {};\node[c] (2) at (1,0) {};\draw[-,double distance =2pt] (1) -- (2);\end{tikzpicture} & interior angle $\pi/4$  & \begin{tikzpicture}[baseline=-2pt]\node[c,oplus] (1) at (0,0) {};\end{tikzpicture} & $(r,r)=-8$\\
\begin{tikzpicture}[baseline=-2pt]\node[c] (1) at ( 0,0) {};\node[c] (2) at (1,0) {};\draw[-,ultra thick] (1) -- (2);\end{tikzpicture} & parallel & & \\
\begin{tikzpicture}[baseline=-2pt]\node[c] (1) at ( 0,0) {};\node[c] (2) at (1,0) {};\draw[dashed] (1) -- (2);\end{tikzpicture} & ultraparallel & &\\
\end{tabular}
\caption{Conventions for Coxeter graphs}
\label{conv}
\end{figure}
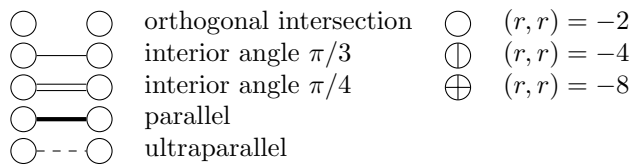

A Coxeter subdiagram $D_J\subset D_I$ with $J\subset I$ is called elliptic if the corresponding Gram matrix is negative definite of rank $\vert J \vert$ and parabolic if it is negative semidefinite of rank $\vert J\vert -\#$components of $D_J$. An elliptic subdiagram is a disjoint union of finite Coxeter diagrams and a parabolic subdiagram the disjoint union of affine Coxeter diagrams. The elliptic subdiagrams of $D$ of rank $r$ correspond to the $(n-r)$-faces of the polyhedron $C\in \mathbb{B}_n$. A parabolic subdiagram of rank $n-1$ corresponds to a cusp of $C$. By the type of a face or cusp of $C$ we mean the type of the corresponding Coxeter subdiagram. 

\subsection{Vinberg's algorithm}

Suppose we are given a hyperbolic lattice $L$  of signature $(1,n)$. Vinberg \cite{VinbergAlg} describes an algorithm to determine a set of simple roots of $W(L)$. If the algorithm terminates these simple roots determine a hyperbolic polyhedron $C\subset \mathbb{B}_n$ of finite volume which is a fundamental chamber for the reflection subgroup $W(L)$. We start by choosing a controlling vector $p\in L$ such that $(p,p)>0$. This implies that $[p]\in \mathbb{B}_n$. The idea is to determine a sequence of roots $r_1,r_2,\ldots$ so that the hyperbolic distance of $p$ to the mirrors $H_{r_i}$ is increasing. Since the hyperbolic distance $d(p,H_{r_i})$ is given by
\begin{equation}\label{hypdistance}
\sinh^2 d(p,H_{r_i}) = \frac{ -(r_i,p)^2}{(r_i,r_i)\cdot (p,p)}
\end{equation}
the height $h(r_i)$ of a root defined by $h(r_i)=-2(r_i,p)^2/(r_i,r_i)$ is a measure for this distance. First we determine the roots of height $0$. They form a finite root system $R$ and we choose a set of simple roots $r_1,\ldots,r_i$ to be our first batch of roots. For the inductive step in the algorithm we consider all roots of height $h$ and assume that all roots of smaller height have been enumerated. A root of height $h$ is accepted if and only if it has nonnegative inner product with all previous roots of the sequence. The algorithm terminates if the accute angled polyhedron spanned by the mirrors $H_{r_1},\ldots$ has finite volume. This can be checked using the following criterion also due to Vinberg.

\begin{prop}A Coxeter polyhedron $C\subset \mathbb{B}_n$ has finite volume if and only if every elliptic subdiagram of rank $n-1$ can be extended in exactly two ways to an elliptic subdiagram of rank $n$ or to a parabolic subdiagram of rank $n-1$. Furthermore there should be at least $1$ elliptic subdiagram of rank $n-1$.
\end{prop}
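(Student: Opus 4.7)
The plan is to translate the combinatorial condition on the Coxeter diagram into geometric information about the face lattice of $C$, using the correspondence recalled just before this proposition. Rank $n-1$ elliptic subdiagrams $D_J\subset D_I$ correspond to edges (1-faces) of $C$; rank $n$ elliptic subdiagrams correspond to ordinary vertices lying in $\mathbb{B}_n$; and rank $n-1$ parabolic subdiagrams correspond to ideal vertices (cusps) of $C$ on $\partial\mathbb{B}_n$. Under this correspondence, an extension $J\subsetneq J'\subset I$ of a given rank $n-1$ elliptic subdiagram to either a rank $n$ elliptic or rank $n-1$ parabolic subdiagram is the same data as an endpoint of the corresponding edge, lying either in $\mathbb{B}_n$ or on $\partial\mathbb{B}_n$.

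First I would invoke the classical hyperbolic-geometric criterion: a nonempty convex polyhedron in $\mathbb{H}_n$ has finite volume if and only if its closure in $\overline{\mathbb{B}_n}=\mathbb{B}_n\cup \partial \mathbb{B}_n$ is compact, equivalently if and only if it is the (closed) convex hull of finitely many points in $\overline{\mathbb{B}_n}$. This reduces the proposition to showing that the combinatorial condition on $D_I$ is equivalent to the closure $\overline{C}\subset\overline{\mathbb{B}_n}$ being compact.

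For the implication finite volume $\Rightarrow$ combinatorial condition, I would argue that each edge of $C$ is a geodesic segment whose closure in $\overline{\mathbb{B}_n}$ is compact, hence has exactly two endpoints in $\overline{\mathbb{B}_n}$. Each such endpoint is either an ordinary vertex (corresponding to an elliptic extension of rank $n$) or an ideal vertex/cusp (corresponding to a parabolic extension of rank $n-1$), giving exactly two extensions; the compactness of $\overline{C}$ together with finite dimensionality forces the existence of at least one such edge.

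For the converse, I would proceed by an inductive buildup of the face lattice. The hypothesis applied to every rank $n-1$ elliptic subdiagram shows that each edge of $C$ is a bounded geodesic segment (or bi-infinite geodesic joining two cusps) in $\overline{\mathbb{B}_n}$; combined with the acute-angled property of $C$ and convexity, one propagates this boundedness to every higher-dimensional face, concluding that $\overline{C}$ is the convex hull of its finitely many finite and ideal vertices, hence compact. The main technical obstacle lies in this propagation: one must rule out that a higher-dimensional face of $C$ escapes to infinity in a direction not already detected at the level of edges. This is handled by the acute-angled structure, which prevents a face from being unbounded unless it contains an unbounded edge, and by the local finiteness coming from discreteness of the reflection group $W(L)$.
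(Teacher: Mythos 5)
You should first note what you are being compared against: the paper does not prove this proposition at all — it is quoted from Vinberg (\cite{VinbergAlg}, \cite{Vinbergquad}) with only the one-line geometric gloss that every edge must connect two vertices, two cusps, or a vertex and a cusp. Your dictionary (rank $n-1$ elliptic subdiagrams $\leftrightarrow$ edges, rank $n$ elliptic $\leftrightarrow$ ordinary vertices, rank $n-1$ parabolic $\leftrightarrow$ cusps, extensions $\leftrightarrow$ ends of an edge) is the right starting point and agrees with that gloss. But your ``classical criterion'' is wrong as stated: the closure of \emph{any} subset of $\overline{\mathbb{B}_n}$ is automatically compact, because the closed ball is compact, so ``$\overline{C}$ compact'' is vacuous — and in particular it cannot ``force the existence of at least one edge,'' as you assert at the end of the forward direction; that step is a non sequitur. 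The correct classical statement is the second one you mention: a finitely-sided polyhedron has finite volume if and only if it is the convex hull of finitely many points of $\mathbb{B}_n \cup \partial\mathbb{B}_n$, equivalently if and only if $\overline{C}\cap\partial\mathbb{B}_n$ is a finite set. Run through that form, the forward direction is fine: each $1$-face of such a hull is a geodesic segment with exactly two ends, each an ordinary or ideal vertex, and a full-dimensional hull of finitely many points with $n\geq 2$ has at least one $1$-face.

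The genuine gap is in the converse, and it is exactly the step you defer. What must be propagated from edges to higher faces is not boundedness — a finite-volume $C$ typically has metrically unbounded faces (a $2$-face that is an ideal triangle has all its edges ending in cusps and is unbounded, yet perfectly admissible) — but finiteness of the ideal point set, i.e.\ that each face is the hull of finitely many ordinary and ideal points. Your key claim, ``the acute-angled structure prevents a face from being unbounded unless it contains an unbounded edge,'' is precisely the nontrivial content of Vinberg's theorem, not a tool one may invoke: one must exclude that $\overline{C}\cap\partial\mathbb{B}_n$ contains a nondegenerate segment or an entire face at infinity even though every edge has two good ends, and this requires the Andreev--Vinberg theory of acute-angled polyhedra (elliptic subsets of the Gram matrix correspond to actual nonempty faces, facets of acute-angled polyhedra are again acute-angled, so that one can induct on dimension over the facets), none of which you carry out. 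Your appeal to ``local finiteness coming from discreteness of $W(L)$'' is also misplaced: the proposition is applied to a single polyhedron cut out by the finitely many simple roots found so far — that is how it terminates Vinberg's algorithm — and its validity cannot depend on the ambient reflection group. In short: correct dictionary and correct global strategy, but the forward direction leans on a vacuous compactness claim and the converse asserts, rather than proves, the theorem's actual content.
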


Since an elliptic subdiagram of rank $n-1$ corresponds to an edge of the polyhedron $C$ the geometrical content of this criterion is that every edge connects either two actual vertices, two cusps or a vertex and a cusp. The following example is due to Vinberg, see \cite{Vinbergquad} \S 4.

\begin{ex}\label{VinCoxex}Consider the hyperbolic lattice $\mathbb{Z}_{1,n}(2)=(2)\oplus A_1^n$ with its standard orthogonal basis $\{e_0,\ldots,e_n\}$ where $2\leq n \leq 9$. The possible root norms are $-2$ and $-4$. We take as controlling vector $p=e_0$ with $(p,p)=2$. The height $0$ root system is of type $B_n$ and a basis of simple roots is given by
\begin{align*}
r_1=e_1-e_2 \ , \ \ldots, r_n = e_{n-1}-e_n \ , \ r_n= e_n.
\end{align*}
The next root accepted by Vinberg's algorithm is the root $r_{n+1}=e_0-e_1-e_2$ of height $2$ for $n=2$ and the root $r_{n+1}=e_0-e_1-e_2-e_3$ of height $1$ for $3\leq n \leq 9$. This root indeed satisfies $(r_{n+1},r_i)>0$ for $i=1,\ldots,n$. The resulting Coxeter polyhedron is a simplex and has finite volume so the algorithm terminates. In all the cases there is a single cusp of type $\widetilde{A}_1$ for $n=2$ and of type $\widetilde{B}_{n-1}$ for $n=3,\ldots,8$, except when $n=9$ in which case there are 2 cusps of type $\widetilde{B}_{8}$ and $\widetilde{E}_8$. The Coxeter diagrams are shown in Figure \ref{VinCox}.
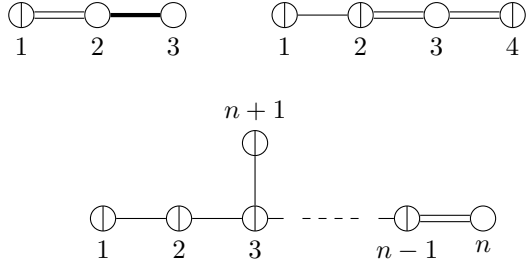
\begin{figure}\centering
\subfigure{
\begin{tikzpicture}
\node[c,vert,label = below:$1$] (1) at (0,0) {};
\node[c,label = below:$2$] (2) at (1,0) {};
\node[c,label = below:$3$] (3) at (2,0) {};
\draw[-,double distance = 2pt] (1) -- (2);
\draw[-,ultra thick] (2) -- (3);
\end{tikzpicture}
}
\qquad
\subfigure{
\begin{tikzpicture}
\node[c,vert,label = below:$1$] (1) at (0,0) {};
\node[c,vert,label = below:$2$] (2) at (1,0) {};
\node[c,label = below:$3$] (3) at (2,0) {};
\node[c,vert,label = below:$4$] (4) at (3,0) {};
\draw[-] (1) -- (2);
\draw[-,double distance = 2pt] (2) -- (3) -- (4);
\end{tikzpicture}
}

\qquad
\subfigure{
\centering
\begin{tikzpicture}
\node[c,vert,label = below:$1$] (1) at (0,0) {};
\node[c,vert,label = below:$2$] (2) at (1,0) {};
\node[c,vert,label = below:$3$] (3) at (2,0) {};
\node (x1) at (2.5,0) {};
\node (x2) at (3.5,0) {};
\node[c,vert,label=below:$n-1$] (4) at (4,0) {};
\node[c,label = below:$n$] (5) at (5,0) {};
\node[c,vert,label=above:$n+1$] (6) at (2,1) {};
\draw[-] (1) -- (2) -- (3) -- (6);
\draw[-] (3) -- (x1);
\draw[dashed] (x1) -- (x2);
\draw[-] (x2) -- (4);
\draw[-,double distance = 2pt] (4) -- (5);
\end{tikzpicture}
}
\caption{The Coxeter diagrams of the groups $O(\mathbb{Z}_{1,n}(2))^+$ for $n=2,\ldots,9$.}
\label{VinCox}
\end{figure}    
\end{ex}

\section{Gaussian lattices}

\label{Gaussian}

This section is in a sense the technical heart of this article. We study Gaussian lattices of hyperbolic signature and show how these give rise to arithmetic complex ball quotients. Antiunitary involutions of the Gaussian lattice then correspond to real forms of these ball quotients. The main examples are the two Gaussian lattices $\Lambda_{1,5}$ and $\Lambda_{1,6}$ whose ball quotients correspond to the moduli spaces of smooth binary octics and smooth quartic curves. An excellent reference on the topic of Gaussian lattices is \cite{Allcock}. It also contains many examples of lattices over the Eisenstein and Hurwitz integers.

A Gaussian lattice is a pair $(\Lambda,\rho)$ with $\Lambda$ a lattice and $\rho \in O(\Lambda)$ an automorphism of order four such that the powers $\rho,\rho^2$ and $\rho^3$ act without nonzero fixed points. Such a lattice $\Lambda$ can be considered as a module over the ring of Gaussian integers $\mathcal{G}=\mathbb{Z}[i]$ by assigning $(a+ib)x=ax+b\rho(x)$ for all $x\in \Lambda$ and $a,b\in \mathbb{Z}$. The expression 
\[h(x,y)=(x,y) + i(\rho(x),y)\]
defines a $\mathcal{G}$-valued nondegenerate Hermitian form on $\Lambda$ which is linear in its second argument and antilinear in its first argument. 
Conversely suppose that $\Lambda$ is a free $\mathcal{G}$-module of finite rank equipped with a $\mathcal{G}$-valued Hermitian form $h(\cdot,\cdot)$. We define a symmetric bilinear form on the underlying $\mathbb{Z}$-lattice of $\Lambda$ by taking the real part of the Hermitian form: $(x,y)=\Real h(x,y)$. Multiplication by $i$ defines an automorphism $\rho$ of order $4$ so the pair $(\Lambda,\rho)$ is a Gaussian lattice. It is easily checked that these two constructions are inverse to each other. Another way of defining a Gaussian lattice is by prescribing a Hermitian Gaussian matrix. Such a matrix $H$ satisfies $\overline{H}^t=H$ and defines a Hermitian form on $\mathcal{G}^n$ by the formula $h(x,y)=\bar{x}^tHy$.
The dual of a Gaussian lattice $\Lambda$ is the lattice $\Lambda^\vee = \Hom(\Lambda,\mathcal{G})$. It is naturally embedded in the vector space $\Lambda \otimes_\mathcal{G} \mathbb{Q}(i)$ by the identification
\[ \Lambda^\vee = \left\{ x \in \Lambda \otimes_\mathcal{G} \mathbb{Q}(i) \ ; \ h(x,y)\in \mathcal {G} \ \text{for all} \ y\in \Lambda \right\} .\] 

From now on we only consider nondegenerate Gaussian lattices that satisfy the condition $h(x,y)\in (1+i)\mathcal{G}$ for al $x,y\in \Lambda$. This is equivalent to $\Lambda \subset (1+i)\Lambda^\vee$ and implies that the underlying $\mathbb{Z}$-lattice of $\Lambda$ is even. 

\begin{lem}\label{rhocommutes}The group $U(\Lambda)$ of unitary transformations of a Gaussian lattice $\Lambda$ is equal to the group
\[ \Gamma = \{ \gamma \in O(\Lambda) \ ; \ \gamma \circ \rho = \rho \circ \gamma \} \] 
of orthogonal transformations of the underlying $\mathbb{Z}$-lattice of $\Lambda$ that commute with $\rho$.
\end{lem}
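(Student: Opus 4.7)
The plan is to prove the two inclusions $U(\Lambda)\subseteq \Gamma$ and $\Gamma \subseteq U(\Lambda)$ separately, using the explicit relation $h(x,y) = (x,y) + i(\rho(x),y)$ established just before the lemma.

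For the inclusion $U(\Lambda)\subseteq \Gamma$, I would take $\gamma \in U(\Lambda)$, which by definition is $\mathcal{G}$-linear and preserves $h$. Since $\mathcal{G}$-linearity includes $\gamma(ix) = i\gamma(x)$ for all $x$, and multiplication by $i$ on $\Lambda$ is exactly the action of $\rho$, this immediately gives $\gamma \circ \rho = \rho \circ \gamma$. Moreover $\gamma$ is in particular a $\mathbb{Z}$-linear automorphism of the underlying lattice, and preservation of $h$ implies preservation of its real part $(x,y) = \Real h(x,y)$, so $\gamma \in O(\Lambda)$. Hence $\gamma \in \Gamma$.

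For the reverse inclusion $\Gamma \subseteq U(\Lambda)$, I would take $\gamma \in \Gamma$ and verify both $\mathcal{G}$-linearity and preservation of $h$. The $\mathcal{G}$-module structure is defined by $(a+bi)x = ax + b\rho(x)$, so a $\mathbb{Z}$-linear map that commutes with $\rho$ is automatically $\mathcal{G}$-linear. To see that $h$ is preserved, I compute
\[
h(\gamma x, \gamma y) = (\gamma x, \gamma y) + i(\rho(\gamma x),\gamma y) = (\gamma x,\gamma y) + i(\gamma(\rho x),\gamma y),
\]
using $\rho\circ\gamma = \gamma \circ \rho$. Since $\gamma \in O(\Lambda)$ preserves the bilinear form, the right-hand side equals $(x,y) + i(\rho(x),y) = h(x,y)$. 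Therefore $\gamma \in U(\Lambda)$.

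This proof is essentially a direct unwinding of definitions, so I do not expect any genuine obstacle; the only thing to be careful about is not to confuse the two structures (the $\mathbb{Z}$-bilinear $(\cdot,\cdot)$ and the $\mathcal{G}$-sesquilinear $h$) and to make sure that the identification of multiplication by $i$ with the action of $\rho$ is invoked explicitly in both directions.
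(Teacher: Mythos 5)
Your proof is correct and takes essentially the same route as the paper: both directions amount to unwinding the identity $h(x,y)=(x,y)+i(\rho(x),y)$, with the real part giving $\gamma\in O(\Lambda)$ and the imaginary part (equivalently the $\rho$-term) giving preservation of $h$. The only cosmetic difference is that for the inclusion $U(\Lambda)\subseteq\Gamma$ the paper derives $\gamma\circ\rho=\rho\circ\gamma$ by equating imaginary parts and using nondegeneracy, whereas you read it off directly from the $\mathcal{G}$-linearity built into the definition of a unitary transformation; both are valid.
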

\begin{proof}
If $\gamma \in U(\Lambda)$ then by definition $h(\gamma x,\gamma y) = h(x,y)$ for all $x,y\in \Lambda$. Using the definition of the Hermitian form $h$ this is equivalent to
\begin{align*}
(\gamma x,\gamma y) + i (\rho \gamma x, \gamma y)
&= (x,y) + i(\rho x,y).\\
\end{align*}
By considering the real part of this equality we see that $\gamma \in O(\Lambda)$. Combining this with the equality of the imaginary parts of the equation we obtain $(\rho \gamma x,\gamma y)=(\gamma \rho x,\gamma y)$ for all $x,y \in \Lambda$. This is equivalent to: $\rho \circ \gamma=\gamma \circ \rho$. This proves the inclusion $U(\Lambda)\subset \Gamma$. For the other inclusion we can reverse the argument. 
\end{proof}

A root $r \in \Lambda$ is a primitive element of norm $-2$. For every root $r$ we define a complex reflection $t_{r}$ of order $4$ (a tetraflection) by
\begin{equation}
t_{r}(x) = x - (1-i)\frac{h(r,x)}{h(r,r)} r ,
\end{equation}
which is an element of $U(\Lambda)$ because $\Lambda \subset (1+i)\Lambda^\vee$. It is a unitary transformation of $\Lambda$ that maps $r \mapsto ir$ and fixes pointwise the mirror $H_r=\{x\in \Lambda \ ; \ h(r,x)=0 \}$. The tetraflection $t_{r}$ and the mirror $H_r$ only depend on the orbit of $r$ under the group of units $\mathcal{G}^\ast=\{ 1,i,-1,-i\}$ of $\mathcal{G}$. We call such an orbit a projective root and denote it by $[r]$. If the group generated by tretraflections in the roots has finite index in $\Gamma = U(\Lambda)$ we say that the lattice $\Lambda$ is tetraflective.

\begin{ex}[The Gaussian lattice $\Lambda_2$]
The lattice $D_4$ is given by
\[ D_4 = \left\{ x \in \mathbb{Z}^4 \ ; \ \sum x_i \equiv 0 \bmod{2} \right\} \]
with the symmetric bilinear form induced by the standard form of $\mathbb{Z}^4$ scaled by a factor $-1$ so that $(x,y)=-\sum x_iy_i$. We choose a basis for this lattice given by the roots $\{\beta_i \}$ with the Gram matrix $B$ shown below.
\[
\begin{array}{lcr}
\beta_1=e_3-e_1 & \multirow{3}{*}{$\  \ B=\begin{pmatrix}-2&0&1&-1\\0&-2&1&1\\1&1&-2&0\\-1&1&0&-2\end{pmatrix}$} &  \multirow{3}{*}{$\  \ \rho =\begin{pmatrix}0&-1&0&0\\1&0&0&0\\0&0&0&-1\\0&0&1&0\end{pmatrix}$}\\
\beta_2=-e_1-e_3 \\
\beta_3=e_1-e_2 \\
\beta_4=e_3-e_4
\end{array}
\]
The matrix $\rho$ defines an automorphism of order $4$ without fixed points which turns the lattice $D_4$ into a Gaussian lattice which we will call $\Lambda_2$. A basis for $\Lambda_2$ is given by the roots $\{\beta_1,\beta_3\}$ and the Gram matrix $H$ with respect to this basis is given by
\[ H = \begin{pmatrix}-2&1+i\\1-i&-2\end{pmatrix}. \]
A small calculation shows that there are $6$ projective roots which are the $\mathcal{G}^\ast$-orbits of the roots
\[ \left\{ \begin{pmatrix}1\\0\end{pmatrix} , \begin{pmatrix}0\\1\end{pmatrix} , \begin{pmatrix}1\\1\end{pmatrix} , \begin{pmatrix}1\\-i\end{pmatrix}, \begin{pmatrix}1\\1-i\end{pmatrix} , \begin{pmatrix}1+i\\1\end{pmatrix} \right\} \]
The group generated by the tetraflections in these roots is the complex reflection group $G_8$ of order $96$ in the Shephard-Todd classification \cite{Shephard}. A basis for the dual lattice $\Lambda_2^\vee$ is given by $\left\{\frac{1}{1+i}\beta_1,\frac{1}{1+i}\beta_3 \right\}$ so that $(1+i)\Lambda_2^\vee = \Lambda_2$. \end{ex}

\begin{ex}Consider the Gaussian lattice $\Lambda_{1,1}$ with basis $\{e_1,e_2\}$ and Hermitian form defined by the matrix:
\[H=\begin{pmatrix} 0&1+i\\ 1-i&0 \end{pmatrix}.\]
It is easy to verify that $(1+i)\Lambda_{1,1}^\vee=\Lambda_{1,1}$. A basis $\{\beta_1,\ldots,\beta_4\}$ for the underlying $\mathbb{Z}$-lattice and its Gram matrix $B$ are shown below.
\[
\begin{array}{lr}
\beta_1 = e_2 & \multirow{3}{*}{$B=\begin{pmatrix}0&1&0&0\\1&0&0&0\\0&0&0&2\\0&0&2&0\end{pmatrix}$} \\
\beta_2 = ie_1  \\
\beta_3 = e_1-ie_1  \\
\beta_4 = e_2-ie_2 
\end{array}
\]
We conclude that the underlying $\mathbb{Z}$-lattice is isomorphic to $U\oplus U(2)$. 
\end{ex}
Using these two examples of Gaussian lattices we can construct many more by forming direct sums. We are especially interested in the Gaussian lattices of hyperbolic signature since these occur in the study of certain moduli problems. For example the Gaussian lattice $ \Lambda_{1,1} \oplus \Lambda_2 \oplus \Lambda_2 $ plays an important role in the study of the moduli space $\mathcal{M}_{0,8}$ of $8$ points on the projective line. Yoshida and Matsumoto \cite{MatsumotoYoshida} prove that the unitary group of this lattice is generated by $7$ tetraflections so that it is in particular tetraflective. This also follows from the work of Deligne and Mostow \cite{DeligneMostow}.

\subsection{Antiunitary involutions of Gaussian lattices}
Let $\Lambda$ be a Gaussian lattice of rank $n$ and signature $(n_+,n_-)$. An antiunitary involution of $\Lambda$ is an involution $\chi$ of the underlying $\mathbb{Z}$-lattice that satisfies
\[ h\left( \chi(x),\chi(y) \right) = \overline{h\left( x,y \right)}. \] 
Equivalently it is an involution that anticommutes with $\rho$ so that: $\chi \circ \rho = - \rho \circ \chi$. An antiunitary involution $\chi$ naturally extends to the $\mathbb{Q}(i)$-vectorspace $\Lambda_{\mathbb{Q}}=\Lambda \otimes_\mathcal{G} \mathbb{Q}(i)$ which can be regarded as a $\mathbb{Q}$-vectorspace of dimension $2n$ and signature $(2n_+,2n_-)$. The fixed point subspace $\Lambda_\mathbb{Q}^\chi$ is a $\mathbb{Q}$-vectorspace of dimension $n$ and signature $(n_+,n_-)$. Consider the fixed point lattice $\Lambda^\chi = \Lambda \cap \Lambda_\mathbb{Q}^\chi$. The Hermitian form restricted to $\Lambda^\chi$ takes on real values in $(1+i)\mathcal{G}$ and therefore has in in fact even values. This implies that $\Lambda^\chi(\frac{1}{2})$ in an integral lattice. 
\begin{prop}\label{antiunit}
Let $\Lambda$ be the Gaussian lattice defined by a Hermitian matrix $H$, so that in particular $\Lambda \cong \mathcal{G}^n$. Every antiunitary involution of $\Lambda$ is of the form $\chi= M \circ \conj$ where $\conj$ is standard complex conjugation on $\Lambda \cong \mathcal{G}^n$. The matrix $M$ has coefficients in $\mathcal{G}$ and satisfies $\overline{M}M=I$ and: $\overline{M}^tHM=\overline{H}$.
\end{prop}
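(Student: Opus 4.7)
The plan is to verify directly that the map $M := \chi \circ \conj$ has the three required properties. Since $\conj$ is its own inverse, the identity $\chi = M \circ \conj$ holds tautologically once $M$ is defined this way, so the substance of the proposition is the assertion that $M$ is represented by a Gaussian matrix satisfying $\overline{M}M = I$ and $\overline{M}^t H M = \overline{H}$. My strategy is to extract each of these from one of the defining properties of $\chi$, namely its $\mathbb{Z}$-linearity and preservation of $\Lambda$, its squaring to the identity, and the Hermitian compatibility condition.

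First I would show that $M$ is $\mathcal{G}$-linear and therefore given by a matrix with entries in $\mathcal{G}$. The hypothesis is that $\chi \circ \rho = -\rho \circ \chi$, and componentwise conjugation also anticommutes with $\rho$ because $\conj$ sends $i$ to $-i$; the two anticommutations cancel, giving $M \circ \rho = \rho \circ M$. Since $M$ is a $\mathbb{Z}$-linear self-map of $\Lambda \cong \mathcal{G}^n$ that commutes with multiplication by $i$, it is $\mathcal{G}$-linear, hence represented by a matrix with Gaussian entries in the standard basis.

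Next I would deduce $M\overline{M} = I$ from $\chi^2 = 1$. Writing $\chi^2 = M \circ \conj \circ M \circ \conj$ and observing that the inner composition $\conj \circ M \circ \conj$ is precisely the map given by the entrywise conjugated matrix $\overline{M}$, the involution property translates to $M \overline{M} = I$; the relation $\overline{M}M = I$ then follows since $M$ is square over the commutative ring $\mathcal{G}$. Finally, for the Hermitian identity, I substitute $\chi = M \circ \conj$ into $h(\chi x, \chi y) = \overline{h(x,y)}$ and use $h(u,v) = \overline{u}^t H v$: a direct computation gives $h(M\overline{x}, M\overline{y}) = x^t \overline{M}^t H M \overline{y}$ on the left, while $\overline{h(x,y)} = x^t \overline{H}\,\overline{y}$ on the right, and comparing for all $x,y$ yields $\overline{M}^t H M = \overline{H}$. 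No step is a real obstacle; the only thing to track carefully is the interplay between antilinearity (encoded by $\conj$ and preserved under composition with $\chi$) and ordinary matrix conjugation, which is exactly the point at which $\mathcal{G}$-linearity of $M$ becomes available.
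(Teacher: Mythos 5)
Your proof is correct, but it is routed differently from the paper's. The paper obtains the form $\chi = M \circ \conj$ by first invoking the fact that every antiunitary involution on the vector space $\Lambda \otimes_\mathcal{G} \mathbb{Q}(i)$ is conjugate to standard conjugation: it picks $N$ with $N \circ \chi \circ N^{-1} = \conj$, sets $M = N^{-1}\overline{N}$, and then reads off $\overline{M}M = I$ immediately from that product form; the verification of $\overline{M}^t H M = \overline{H}$ is the same substitution you perform. You instead define $M = \chi \circ \conj$ tautologically and verify everything by hand: $\mathcal{G}$-linearity of $M$ from the cancellation of the two anticommutations with $\rho$ (which also gives Gaussian integrality directly, since $M$ preserves $\Lambda \cong \mathcal{G}^n$ --- a point the paper's route leaves slightly implicit, as $N$ only lives over $\mathbb{Q}(i)$), and $M\overline{M} = I$ from $\chi^2 = 1$ via $\conj \circ M \circ \conj = \overline{M}$. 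Your approach is the more elementary and self-contained one: it dispenses with the conjugacy-to-$\conj$ input (essentially a Hilbert 90/descent statement) entirely, at no cost in length. Two small remarks: your passage from $M\overline{M}=I$ to $\overline{M}M=I$ via right inverses over a commutative ring is fine, but it is even quicker to just conjugate the relation entrywise, since $\overline{M\overline{M}} = \overline{M}M$; and your final display $x^t \overline{M}^t H M \overline{y} = x^t \overline{H}\,\overline{y}$ has the conjugation bars placed correctly, whereas the paper's corresponding display writes $x^t\overline{H}y$ on the right, an apparent typo that your computation silently corrects.
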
 

\begin{proof}
Suppose that $\chi$ is a antiunitary involution of $\Lambda$. Since every antiunitary involution on the vector space $\Lambda \otimes_\mathcal{G} \mathbb{Q}(i)$ is conjugate to standard complex conjugation there is a matrix $N$ such that $N\circ \chi \circ N^{-1} =\conj$. We can rewrite this as $\chi = M \circ \conj$ where $M=N^{-1}\overline{N}$ and $M$ has coefficients in $\mathcal{G}$. It is clear that $\overline{M}M=I$. Finally we can rewrite the equality $h(\chi(x),\chi(y)) = \overline{h(x,y)}$ as
\[ x^t\overline{M}^tHM\overline{y} = x^t\overline{H}y. \]
This holds for all $x,y\in \Lambda$ so that the last equality of the proposition follows.
\end{proof}

Let $\chi$ be an antiunitary involution of the lattice $\Lambda$ and let $[\chi]$ be its projective equivalence class. The elements of $[\chi]$ are the involutions $i^k\chi$ with $k=0,1,2,3$. By conjugation with the scalar $i$ we see that the two involutions $\{\chi,-\chi\}$ and also $\{i\chi,-i\chi\}$ are conjugate in $\Gamma$. The antiunitary involutions $\chi$ and $i\chi$ need not be $\Gamma$-conjugate, so in particular their fixed point lattices need not be isomorphic. This can already be seen in the simplest case of antiunitary involutions on $\mathcal{G}$. The fixed points lattice of the antiunitary involution $\conj$ and $i\circ \conj$ are $\mathbb{Z}$ and $(1+i)\mathbb{Z}$ respectively. 

We now present some computational lemma's on antiunitary involutions of Gaussian lattices of small rank. These will be very useful later on and will be referenced to throughout this text. 

\begin{lem}\label{fixedblocks}Let $\psi_1,\psi_2,\psi_2^\prime$ and $\psi_4$ be the transformations obtained by composing the following matrices with complex conjugation:
\[ M_1 = (1) \ , \ M_2=\begin{pmatrix}i&0\\0&1\end{pmatrix} \ , \ M_2^\prime=\begin{pmatrix}0&1\\1&0\end{pmatrix} \ , \ M_4=\begin{pmatrix}0&0&i&0\\0&0&0&1\\i&0&0&0\\0&1&0&0\end{pmatrix}. \]
They define antiunitary involutions $\chi$ on certain Gaussian lattices $\Lambda$ shown in Table \ref{fixedblockstable}. The fixed point lattices $\Lambda^\chi$ are also computed along with a matrix $B^\chi$ such that the columns of this matrix form a $\mathbb{Z}$-basis for the fixed point lattice $\Lambda^\chi$. 
\begin{table}
\centering
\[
\begin{array}{llll}
\toprule
\chi & \Lambda & \Lambda^\chi & B^\chi \\
\midrule
\psi_1 & \mathcal{G} & A_1 & (1) \\
i\psi_1 & \mathcal{G} & A_1(2) & (1+i) \\
\psi_2 & \Lambda_2 & A_1^2 & \begin{pmatrix}1+i&0\\1&1\end{pmatrix} \\
\psi_2 & \Lambda_{1,1} & U(2) & \begin{pmatrix}1+i&0\\0&1\end{pmatrix} \\
\psi_2^\prime & \Lambda_2 & A_1\oplus A_1(2) & \begin{pmatrix}1&1+i\\1&1-i\end{pmatrix} \\
\psi_2^\prime & \Lambda_{1,1} & A_1(2)\oplus (2) & \begin{pmatrix}1-i&1\\1+i&1\end{pmatrix} \\
\psi_4 & \Lambda_2^2 & D_4(2) & \begin{pmatrix}-1&i&-i&-i\\0&i&0&-1-i\\-i&1&-1&-1\\0&-i&0&-1+i\end{pmatrix}\\
\bottomrule
\end{array}
\]
\caption{Some antiunitary transformations of Gaussian lattices of small rank.}
\label{fixedblockstable}
\end{table}

\end{lem}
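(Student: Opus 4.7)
The plan is to verify each row of Table~\ref{fixedblockstable} by a direct computation, splitting the work into three sub-tasks: confirming that the candidate transformation is an antiunitary involution, identifying the fixed $\mathcal{G}$-sublattice of $\Lambda$ as a $\mathbb{Z}$-module, and recognising the isomorphism type of the resulting integral lattice $\Lambda^{\chi}$.

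First, for each of $M_1$, $M_2$, $M_2'$ and $M_4$ I would apply the criterion of Proposition~\ref{antiunit}. The relation $\overline M M = I$ is immediate from inspection in each case. The relation $\overline M^t H M = \overline H$ is verified by a small matrix calculation, where $H = (-2)$, $H = \left(\begin{smallmatrix}-2 & 1+i\\ 1-i & -2\end{smallmatrix}\right)$, or $H = \left(\begin{smallmatrix}0 & 1+i\\ 1-i & 0\end{smallmatrix}\right)$ according to whether $\Lambda = \mathcal G$, $\Lambda_2$ or $\Lambda_{1,1}$ (and the block-diagonal $H^{\oplus 2}$ on $\Lambda_2^2$ for $M_4$). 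This ensures each $\psi$ and $i\psi$ is a genuine antiunitary involution.

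Second, to compute $\Lambda^{\chi}$ I would solve the $\mathcal{G}$-linear equation $M\overline{x} = x$ with $x \in \Lambda$. For $\psi_1$ on $\mathcal{G}$ this gives $\mathbb{Z}$; for $i\psi_1$ the condition $x = i\overline{x}$ forces $x \in (1+i)\mathbb{Z}$. For $\psi_2$ on $\Lambda_2$ the conditions decouple into $x_1 \in (1+i)\mathbb{Z}$ and $x_2 \in \mathbb{Z}$, while for $\psi_2'$ they couple the two coordinates via $x_1 = \overline{x_2}$. The case of $\psi_4$ on $\Lambda_2^2$ is the most intricate: the block structure of $M_4$ swaps the two $\Lambda_2$-summands and multiplies one of them by $i$, so $\Lambda^{\chi}$ is cut out by $x_3 = i\overline{x_1}$ and $x_4 = \overline{x_2}$. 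In every case I would check that the columns of the given matrix $B^{\chi}$ lie in the fixed locus (an immediate plug-in) and then verify saturation: the rank over $\mathbb{Z}$ matches the $\mathcal{G}$-rank of $\Lambda$, and the span coincides with the full fixed $\mathbb{Z}$-lattice, either by a direct index calculation or by exhibiting the inverse change of basis with entries in $\mathcal{G}$.

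Third, for the isomorphism type, I would compute the Gram matrix $(B^{\chi})^t B H^{\Lambda} B^{\chi}$ of the real inner product $(x,y) = \Real h(x,y)$ with respect to the basis $B^{\chi}$ and compare against the canonical Gram matrices of $A_1$, $A_1(2)$, $A_1^2$, $U(2)$, $A_1(2)\oplus(2)$, and $D_4(2)$. The rank-one and rank-two cases yield $2\times 2$ Gram matrices that can be read off essentially by inspection. The main obstacle is the $\psi_4$ case: here one must carry out the $4\times 4$ Gram computation and recognise the output as the $D_4(2)$ root system. The cleanest way is to show that the resulting Gram matrix agrees, up to permutation of the basis vectors, with $2$ times the standard $D_4$ Cartan matrix displayed in Section~\ref{Lattices}, which makes the identification $\Lambda_2^{2,\psi_4} \cong D_4(2)$ manifest. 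This completes the verification of every entry of Table~\ref{fixedblockstable}.
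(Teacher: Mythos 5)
Your proposal is correct and takes essentially the same route as the paper's own (terse) proof: verify the conditions of Proposition~\ref{antiunit}, solve $M\bar{x}=x$ to describe the fixed sublattice explicitly (your description $x_3=i\overline{x_1}$, $x_4=\overline{x_2}$ for $\psi_4$ is exactly the paper's $\{(z_1,z_2,i\bar{z}_1,\bar{z}_2)\}$), check that the columns of $B^{\chi}$ form a $\mathbb{Z}$-basis, and identify the Gram matrix. The only slip is notational: your displayed Gram formula should read $\overline{B^{\chi}}^{\,t} H B^{\chi}$ (conjugate transpose, no stray factor of $B$), matching the paper's $\Lambda^{\psi_i}=\overline{B^{\psi_i}}^{\,t}\Lambda B^{\psi_i}$; with that correction the $\psi_4$ computation does produce twice the standard $D_4$ Cartan matrix, as you anticipated.
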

 
\begin{proof}
Using the conditions on $M_i$ from Proposition \ref{antiunit} it is a straightforward calculation to prove that the $\psi_i$ are antiunitary involutions. Furthermore we need to check that the columns of $B^{\psi_i}$ form a basis for the fixed point lattice $\Lambda^{\psi_i}$ and that $\Lambda^{\psi_i}=\overline{B^{\psi_i}}^t\Lambda B^{\psi_i}$. For example the fixed point lattice $(\Lambda_2^2)^{\psi_4}$ is given by the subset \[ \{ (z_1,z_2,i\bar{z}_1,\bar{z}_2) \ ; \ z_1,z_2\in \mathcal{G} \} \subset \Lambda_2^2 \]
and it is not difficult to check that the columns of $B^{\psi_4}$ indeed form a $\mathbb{Z}$-basis. The verification for the other lattices proceeds similarly.   
\end{proof}

\begin{lem}\label{chiandichi}The antiunitary involution $\psi_2$ is conjugate in $U(\Lambda_2)$ to $i\psi_2$, likewise $\psi_2^\prime$ is conjugate in $U(\Lambda_{1,1})$ to $i\psi_2^\prime$, and likewise $\psi_4$ is conjugate in $U(\Lambda_2^2)$ to $i\psi_4$.
\end{lem}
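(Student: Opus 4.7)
The plan is to exhibit, in each of the three cases, an explicit unitary transformation $\gamma\in U(\Lambda)$ realizing the desired conjugacy. Writing $\chi = M\circ\conj$ as in Proposition \ref{antiunit}, the relation $\gamma\chi\gamma^{-1} = i\chi$ translates into the matrix identity
\[ \gamma M = iM\bar\gamma, \]
while the unitarity $\gamma\in U(\Lambda)$ is the Hermitian identity $\bar\gamma^{t} H\gamma = H$. Together these form a finite system of algebraic equations over $\mathcal{G}$. Rather than solving them systematically, I will guess a small $\gamma$ in each case, guided by the block structure of $M$.

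For $\psi_2$ on $\Lambda_2$, the matrix $M_2 = \diag(i,1)$ is diagonal, which suggests trying an off-diagonal $\gamma$. The choice $\gamma = \bigl(\begin{smallmatrix} 0 & i \\ 1 & 0 \end{smallmatrix}\bigr)$ should work: the relation $\gamma M_2 = iM_2\bar\gamma$ is immediate from entry-by-entry matching, and the unitarity $\bar\gamma^{t} H_{\Lambda_2}\gamma = H_{\Lambda_2}$ is a short two-by-two verification that uses only $(1-i)(1+i) = 2$ and $i\cdot(1-i) = 1+i$. The same matrix $\gamma$ handles $\psi_2'$ on $\Lambda_{1,1}$ in a completely analogous way, this time because $M_2'$ and $H_{\Lambda_{1,1}}$ are both off-diagonal.

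For $\psi_4$ on $\Lambda_2^2$, the matrix $M_4$ is block off-diagonal with $i$-twists on the entries linking the first and third coordinates. Here an off-diagonal $\gamma$ is no longer natural; instead I would try the diagonal $\gamma = \diag(i,i,1,1)$. This is manifestly unitary because it rescales the first $\Lambda_2$ summand by the unit $i\in\mathcal{G}^*$ and leaves the second summand alone, and the relation $\gamma M_4 = iM_4\bar\gamma$ follows by matching the four nonzero entries of $M_4$ with their partners after multiplication. No conceptual obstacle arises in any of the three cases: the verifications are routine matrix arithmetic. The main (small) task is to hit upon the right $\gamma$, which in each case is essentially forced by the requirement that $\gamma$ introduce exactly one extra factor of $i$ (relative to $\bar\gamma$) on each nonzero entry of $M$.
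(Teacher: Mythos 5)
Your proposal is correct and takes essentially the same approach as the paper, which also just exhibits explicit conjugating unitaries and in fact uses your exact matrix $N=\left(\begin{smallmatrix}0&i\\1&0\end{smallmatrix}\right)$ for both $\psi_2$ on $\Lambda_2$ and $\psi_2'$ on $\Lambda_{1,1}$. For $\psi_4$ the paper conjugates by the block matrix $\left(\begin{smallmatrix}0&N\\N&0\end{smallmatrix}\right)$ instead of your $\gamma=\diag(i,i,1,1)$, but your choice checks out ($\gamma M_4=iM_4\bar{\gamma}$ holds entrywise, and $\gamma$ is trivially unitary since it rescales one $\Lambda_2$ summand by the unit $i$), so this is only a cosmetic, if anything slightly simpler, variation.
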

\begin{proof}
The matrix $N=\left( \begin{smallmatrix}0&i\\1&0\end{smallmatrix} \right)$ satisfies $\overline{N}^t\Lambda_2 N =\Lambda_2$ and  $\overline{N}^t\Lambda_{1,1} N =\Lambda_{1,1}$ so it is contained in $U(\Lambda_2)$ and $U(\Lambda_{1,1})$. It also satisfies $N\psi_2\overline{N}^{-1}=i\psi_2$ and  $N\psi_2^\prime \overline{N}^{-1}=i\psi_2^\prime$. Similarly conjugation by the matrix $\left( \begin{smallmatrix}0&N\\N&0\end{smallmatrix} \right) \in U(\Lambda_2^2)$ maps $\psi_4$ to $i\psi_4$. \end{proof}

\subsection{Ball quotients from hyperbolic lattices}

Let $\Lambda$ be a Gaussian lattice of hyperbolic signature $(1,n)$ with $n\geq 2$ such that $\Lambda \subset (1+i)\Lambda^\vee$. We can associate to $\Lambda$ a complex ball:
\[ \mathbb{B} = \mathbb{P}\{ x \in \Lambda \otimes_{\mathcal{G}} \mathbb{C} \ ; \ h(x,x) > 0 \}. \]  

The group $P\Gamma=PU(\Lambda)$ acts properly discontinuously on $\mathbb{B}$. The ball quotient $P\Gamma \backslash \mathbb{B}$ is a quasi-projective variety of finite hyperbolic volume by the theorem of Baily-Borel \cite{BailyBorel}. Recall that a root $r \in \Lambda$ is an element of norm $-2$. We denote by $\mathcal{H}\subset \mathbb{B}$ the union of all the root mirrors $H_r$ and write $\mathbb{B}^\circ = \mathbb{B} - \mathcal{H}$. In all the examples we consider later on the space $P\Gamma \backslash \mathbb{B}^\circ$ is a moduli space for certain smooth objects. The image of $\mathcal{H}$ in this space is called the discriminant and parametrizes certain singular objects. The following lemma describes how two mirrors in $\mathcal{H}$ can intersect.   

\begin{lem}\label{MirrorIntersect}Let $r_1,r_2$ be two roots in $\Lambda$ such that $H_{r_1} \cap H_{r_2} \neq \emptyset$. The projective classes $[ r_1]$ and $[r_2]$ are either identical, orthogonal or they span a Gaussian lattice of type $\Lambda_2$. 
\end{lem}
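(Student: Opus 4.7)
The plan is to consider the rank of the sublattice $M = \mathcal{G}r_1 + \mathcal{G}r_2 \subset \Lambda$ and use the divisibility hypothesis $\Lambda \subset (1+i)\Lambda^\vee$ together with the geometric constraint coming from the assumption $H_{r_1}\cap H_{r_2}\neq \emptyset$ in $\mathbb{B}$.

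First I would dispose of the degenerate case where $r_1$ and $r_2$ are $\mathcal{G}$-linearly dependent. Writing $r_2=\lambda r_1$ with $\lambda\in \mathbb{Q}(i)$, primitivity of $r_1$ forces $\lambda \in \mathcal{G}$; the norm condition $h(r_2,r_2)=-2$ then gives $|\lambda|^2=1$, so $\lambda\in\mathcal{G}^\ast=\{\pm 1,\pm i\}$ and $[r_1]=[r_2]$.

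Next, assume $r_1,r_2$ are $\mathcal{G}$-linearly independent, so $M$ is a rank-two Gaussian sublattice with Gram matrix
\[
H_M=\begin{pmatrix}-2 & h(r_1,r_2)\\ \overline{h(r_1,r_2)} & -2\end{pmatrix}.
\]
The key geometric input is that the intersection $H_{r_1}\cap H_{r_2}$ inside $\mathbb{B}$ is nonempty precisely when the orthogonal complement $M^\perp$ contains a vector of positive norm, which by signature considerations for $\Lambda$ of signature $(1,n)$ forces $M$ to be negative definite. Writing $\alpha=h(r_1,r_2)$, definiteness is equivalent to $\det H_M = 4-|\alpha|^2 > 0$, i.e.\ $|\alpha|^2<4$.

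The final step exploits the integrality hypothesis $\Lambda \subset (1+i)\Lambda^\vee$, which forces $\alpha=h(r_1,r_2)\in (1+i)\mathcal{G}$. Thus $|\alpha|^2 \in 2\mathbb{Z}_{\geq 0}$, and combined with $|\alpha|^2<4$ we get $|\alpha|^2\in\{0,2\}$. If $|\alpha|^2=0$ the projective classes $[r_1]$ and $[r_2]$ are orthogonal. If $|\alpha|^2=2$ then $\alpha\in\{\pm(1+i),\pm(1-i)\}$; after replacing $r_2$ by an appropriate unit multiple (which does not change $[r_2]$) we may arrange $\alpha=1+i$, and $H_M$ becomes precisely the Gram matrix of $\Lambda_2$ displayed earlier in the paper. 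The only subtlety worth double-checking is the signature argument guaranteeing negative definiteness of $M$ (as opposed to merely non-positive), but since $r_1,r_2$ are linearly independent of norm $-2$ the restriction of $h$ to $M$ is nondegenerate of rank two and cannot have a positive direction, so negative definiteness is automatic once an intersection point in $\mathbb{B}$ exists.
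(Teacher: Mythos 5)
Your proof is correct and follows essentially the same route as the paper: a point of $H_{r_1}\cap H_{r_2}$ in $\mathbb{B}$ gives a positive-norm vector orthogonal to both roots, forcing the span to be negative definite, so $\lvert h(r_1,r_2)\rvert^2<4$, and then the integrality $h(r_1,r_2)\in(1+i)\mathcal{G}$ leaves only $h(r_1,r_2)=0$ or $\pm 1\pm i$, the latter normalized to $1+i$ by unit multiples. The only differences are cosmetic: you explicitly dispose of the $\mathcal{G}$-linearly dependent case (which the paper leaves implicit), and your negative-definiteness remark is best justified not by linear independence alone but by the fact that in signature $(1,n)$ the orthogonal complement of a positive-norm vector is negative definite, which is also what the paper tacitly uses.
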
    
\begin{proof}
Since the images of $H_{r_1}$ and $H_{r_2}$ meet in $\mathbb{B}$ there is a vector $x\in \Lambda$ with $h(x,x)>0$ orthogonal to both $r_1$ and $r_2$. This implies that $r_1$ and $r_2$ span a negative definite space so that the Hermitian matrix
\[ \begin{pmatrix}-2&h(r_1,r_2)\\ h(r_2,r_1)&-2 \end{pmatrix} \]
is negative definite. This is equivalent to $\vert h(r_1,r_2)\vert^2 < 4$ and since $h(r_1,r_2)\in (1+i)\mathcal{G}$ we see that either $h(r_1,r_2)=0$ or $h(r_1,r_2)=\pm 1 \pm i$. In the second case we can assume that $h(r_1,r_2)=1+i$ by multiplying $r_1$ and $r_2$ by suitable units in $\mathcal{G}^\ast$.
\end{proof}

 Let $\mathbb{B}^\chi$ be the fixed point set in $\mathbb{B}$ of the real form $[\chi]$. Since the fixed point lattice $\Lambda^\chi$ is of hyperbolic signature this is a real ball given by
\[ \mathbb{B}^\chi = \mathbb{P}\{ x \in \Lambda^\chi \otimes_\mathbb{Z} \mathbb{R} \ ; \ h(x,x)>0 \}. \]
Note that the lattice $\Lambda^{i\chi}$ defines the same real ball. The isomorphism type of the unordered pair $(\Lambda^\chi,\Lambda^{i\chi})$ is an invariant of the $P\Gamma$-conjugacy class  of $[\chi]$ as shown by the following lemma. This invariant will prove very useful to distinguish between classes up to $P\Gamma$-conjugacy.

\begin{lem}\label{ClassCriterion}
If the projective classes $[\chi]$ and $[\chi^\prime]$ of two antiunitary involutions $\chi$ and $\chi^\prime$ of $\Lambda$ are conjugate in $P\Gamma$ then the isomorphism classes of the unordered pairs of lattices $( \Lambda^\chi , \Lambda^{i\chi} )$ and $( \Lambda^{\chi^\prime} , \Lambda^{i\chi^\prime} )$ are equal.
\end{lem}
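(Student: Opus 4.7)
The plan is to unfold the conjugacy relation into an explicit lattice isomorphism. By definition, the hypothesis that $[\chi]$ and $[\chi']$ are $P\Gamma$-conjugate means that there exist $\gamma \in \Gamma = U(\Lambda)$ and a unit $u \in \mathcal{G}^\ast = \{1, i, -1, -i\}$ with
\[ \gamma \circ \chi \circ \gamma^{-1} = u \cdot \chi'. \]
By Lemma \ref{rhocommutes}, $\gamma$ is in particular an automorphism of the underlying $\mathbb{Z}$-lattice of $\Lambda$ that commutes with multiplication by $i$. Therefore conjugation by $\gamma$ sends the fixed-point lattice of any antiunitary involution $\psi$ isomorphically onto the fixed-point lattice of $\gamma \psi \gamma^{-1}$, and it commutes with the operation $\psi \mapsto i\psi$.

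A preliminary observation carries most of the weight: for every antiunitary involution $\psi$ of $\Lambda$, the $\mathbb{Z}$-lattice $\Lambda^\psi$ is canonically isomorphic to $\Lambda^{-\psi}$ via multiplication by $i$. Indeed, if $\psi(x) = x$ then $\psi(ix) = -i\psi(x) = -ix$ because $\psi$ is antilinear, so multiplication by $i$ maps $\Lambda^\psi$ into $\Lambda^{-\psi}$; since $i \in \Gamma$ acts as a lattice isometry, this is an isomorphism. Applying the same observation with $\psi$ replaced by $i\psi$ gives $\Lambda^{i\psi} \cong \Lambda^{-i\psi}$ as well.

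Now I would carry out the case analysis on the unit $u$. If $u = 1$ then $\gamma$ directly carries $(\Lambda^\chi, \Lambda^{i\chi})$ to $(\Lambda^{\chi'}, \Lambda^{i\chi'})$, using that $\gamma$ commutes with multiplication by $i$ to handle the second coordinate. If $u = -1$, the same argument gives $(\Lambda^\chi, \Lambda^{i\chi}) \cong (\Lambda^{-\chi'}, \Lambda^{-i\chi'})$, and the preliminary observation converts this into $(\Lambda^{\chi'}, \Lambda^{i\chi'})$. If $u = i$, then $\gamma$ carries $\Lambda^\chi$ to $\Lambda^{i\chi'}$ and $\Lambda^{i\chi}$ to $\Lambda^{-\chi'} \cong \Lambda^{\chi'}$, so the unordered pair $(\Lambda^\chi, \Lambda^{i\chi})$ is sent to $(\Lambda^{i\chi'}, \Lambda^{\chi'}) = (\Lambda^{\chi'}, \Lambda^{i\chi'})$ with the two entries swapped. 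The case $u = -i$ is analogous.

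The only subtlety in executing this plan is to remember that the conjugation hypothesis is at the level of projective classes rather than honest involutions, which forces the unit $u$ to appear and causes the swap of the two entries of the pair precisely when $u = \pm i$. This is exactly why the invariant must be recorded as an \emph{unordered} pair $\{\Lambda^\chi, \Lambda^{i\chi}\}$; the ordered version would not be preserved under $P\Gamma$-conjugation.
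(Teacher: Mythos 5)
Your proposal is correct and follows essentially the same route as the paper: extract $\gamma\chi\gamma^{-1}=u\chi'$ from the projective conjugacy, use that $\gamma\in\Gamma$ commutes with multiplication by $i$ to transport both members of the pair, and absorb the unit $u$ via the identification $\Lambda^{\psi}\cong\Lambda^{-\psi}$ (which the paper establishes just before the lemma by conjugating with the scalar $i$, equivalent to your multiplication-by-$i$ observation). Your explicit case analysis over $u\in\mathcal{G}^\ast$, including the swap of the pair when $u=\pm i$, merely spells out what the paper leaves implicit in the phrase ``unordered pair.''
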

\begin{proof}
Suppose $[\chi]$ and $[\chi^\prime]$ are conjugate in $P\Gamma$. Then there is a $g\in \Gamma$ such that $[g\chi g^{-1}]=[\chi^\prime]$. This implies that $g\chi g^{-1} = \lambda \chi^\prime$ for some unit $\lambda \in \mathcal{G}^\ast$ so that the antiunitary involutions $\chi$ and $\lambda \chi^\prime$ are conjugate in $\Gamma$. From this we deduce that $\Lambda^\chi \cong \Lambda^{\lambda \chi^\prime}$. Since $g\in \Gamma$ commutes with multiplication by $i$ the involutions $i\chi$ and $i\lambda \chi^\prime$ are also conjugate in $\Gamma$ and we get $\Lambda^{i\chi} \cong \Lambda^{i\lambda \chi^\prime}$.  
\end{proof}

\begin{prop}\label{PGammaChi}Let $P\Gamma^\chi$ be the stabilizer of $\mathbb{B}^\chi$ in $P\Gamma$. Then we have
\[ P\Gamma^\chi = \{ [g] \in P\Gamma \ ; \ [g] \circ [\chi] = [\chi] \circ [g] \} \]
\end{prop}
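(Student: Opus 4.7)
The plan is to prove both inclusions, proving the easy direction directly and reducing the hard direction to a standard fact in several complex variables.

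For the forward inclusion, suppose $[g]\circ[\chi]=[\chi]\circ[g]$ as transformations of $\mathbb{B}$. Then for every $x\in\mathbb{B}^\chi$, we have $[\chi]([g]x)=[g]([\chi]x)=[g]x$, so $[g]x\in\mathbb{B}^\chi$. Hence $[g]$ stabilizes $\mathbb{B}^\chi$, giving $[g]\in P\Gamma^\chi$.

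For the reverse inclusion, suppose $[g]\in P\Gamma^\chi$. Because $\chi$ anticommutes with $\rho$ and $g$ commutes with $\rho$ (by Lemma \ref{rhocommutes}), $[g]$ is holomorphic on $\mathbb{B}$ and both $[\chi]$ and the conjugate $\sigma:=[g]\circ[\chi]\circ[g]^{-1}$ are antiholomorphic involutions of $\mathbb{B}$. Since $[g]$ preserves $\mathbb{B}^\chi$ setwise, the fixed locus of $\sigma$ equals $[g](\mathbb{B}^\chi)=\mathbb{B}^\chi$, which is the same as the fixed locus of $[\chi]$. Thus the composition $\sigma\circ[\chi]$ is a \emph{holomorphic} automorphism of $\mathbb{B}$ that fixes every point of $\mathbb{B}^\chi$. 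If I can show this composition is the identity, then $\sigma=[\chi]^{-1}=[\chi]$, i.e.\ $[g][\chi]=[\chi][g]$.

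The remaining point reduces to the following assertion: a holomorphic self-map of $\mathbb{B}$ that restricts to the identity on $\mathbb{B}^\chi$ must equal the identity on all of $\mathbb{B}$. This is where I would invoke the identity theorem for several complex variables. The key observation is that $\mathbb{B}^\chi$ is a \emph{maximal totally real} submanifold of $\mathbb{B}$: indeed the $\mathbb{Z}$-rank of $\Lambda^\chi$ equals the $\mathcal{G}$-rank of $\Lambda$, so $\dim_\mathbb{R}(\Lambda^\chi\otimes\mathbb{R})=\dim_\mathbb{C}(\Lambda\otimes_\mathcal{G}\mathbb{C})$, and after projectivizing this gives $\dim_\mathbb{R}\mathbb{B}^\chi=\dim_\mathbb{C}\mathbb{B}$. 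In a holomorphic chart centered at any $p\in\mathbb{B}^\chi$, the real-analytic submanifold $\mathbb{B}^\chi$ can be straightened so that it is cut out by the vanishing of the imaginary parts of the coordinates, and a holomorphic function vanishing on $\mathbb{R}^n\subset\mathbb{C}^n$ necessarily vanishes identically by power series expansion. Applied componentwise to the difference of $\sigma\circ[\chi]$ and the identity (in any local chart, using that $\mathbb{B}$ is connected), this yields $\sigma\circ[\chi]=\mathrm{id}$, completing the proof.

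The main obstacle is essentially cosmetic: one must verify that $\mathbb{B}^\chi$ is totally real of half the ambient real dimension, which follows cleanly from the comparison of ranks noted above. Everything else is bookkeeping about holomorphic versus antiholomorphic maps on $\mathbb{B}$.
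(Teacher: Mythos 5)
Your proof is correct, but your hard inclusion takes a genuinely different route from the paper's. The paper proves the proposition by a purely linear-algebraic chain of equivalences: stabilizing $\mathbb{B}^\chi$ is equivalent to $[\chi(gx)]=[g(\chi x)]$ for all $[x]\in\mathbb{B}^\chi$, and since $\Lambda^\chi\otimes_\mathbb{Z}\mathbb{R}$ is a real form of $\Lambda\otimes_\mathcal{G}\mathbb{C}$, points can be written as $[z]=[x+iy]$ with $[x],[y]\in\mathbb{B}^\chi$; the (anti)linearity of $\chi\circ g$ and $g\circ\chi$ then propagates the relation from the real locus to give $[g\circ\chi]=[\chi\circ g]$, with no complex analysis. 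You instead conjugate: $\sigma=[g]\circ[\chi]\circ[g]^{-1}$ is an antiholomorphic involution whose fixed locus is again $\mathbb{B}^\chi$, so $\sigma\circ[\chi]$ is a holomorphic automorphism fixing $\mathbb{B}^\chi$ pointwise, which you kill via the identity theorem on a maximal totally real submanifold. Both arguments rest on the same underlying fact --- $\mathbb{B}^\chi$ is a real form of $\mathbb{B}$, hence a set of uniqueness --- but yours trades the paper's explicit $x+iy$ computation for a soft analytic principle valid for any two antiholomorphic involutions of a connected complex manifold with a common maximal totally real fixed set; it also sidesteps a point the paper glosses, namely passing from the pointwise projective identity (with a scalar a priori depending on the point) to a single projective identity of maps. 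Two small touch-ups: total reality does not follow from the dimension count alone --- you also need $T\mathbb{B}^\chi\cap iT\mathbb{B}^\chi=0$, which is immediate because the fixed subspace of the antilinear involution $\chi$ is a real form, so $V=V^\chi\oplus iV^\chi$; and no straightening is actually required, since a $\mathbb{Z}$-basis of $\Lambda^\chi$ is a $\mathbb{C}$-basis of $\Lambda\otimes_\mathcal{G}\mathbb{C}$, in which affine coordinates $\mathbb{B}^\chi$ is literally an open subset of $\mathbb{R}^n\subset\mathbb{C}^n$.
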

\begin{proof}
The following statements are equivalent:
\begin{align*}
&[g]\in P\Gamma^\chi ,&\\
&[gx] \in \mathbb{B}^\chi  &\text{ for all } [x]\in \mathbb{B}^\chi ,\\
&[\chi(gx)] = [g(\chi x)] &\text{ for all } [x]\in \mathbb{B}^\chi ,\\
&[\chi(gz)] = [g(\chi z)] &\text{ for all } [z]=[x+iy] \ , \ [x],[y] \in \mathbb{B}^\chi, \\
&[g \circ \chi] = [\chi \circ g].&
\end{align*}
\end{proof}
From Proposition \ref{PGammaChi} we see that for every element $[g]\in P\Gamma^\chi$ precisely one of the following holds:
\begin{enumerate}
\item[I.] There is a $g\in [g]$ such that: $g\chi g^{-1}=\chi$ so that: $g\Lambda^\chi = \Lambda^\chi $.
\item[II.] There is a $g\in [g]$ such that $g\chi g^{-1}=i\chi$ so that: $g\Lambda^\chi = \Lambda^{i\chi}$.
\end{enumerate}
We use Chu's convention from \cite{Chu} and say that $[g]\in P\Gamma^\chi$ is of type $I$ or of  type $II$ respectively. The elements of type $I$ form a subgroup of $P\Gamma^\chi$ which we denote by $P\Gamma^\chi_I$. If there exists an element of type $II$ then this subgroup is of index $2$, otherwise every element of $P\Gamma^\chi$ is of type $I$. 

Every element $[g]\in P\Gamma^\chi$ of type $I$ determines a unique element in $PO(\Lambda^\chi)$ so there is a natural embedding $P\Gamma^\chi_I \hookrightarrow PO(\Lambda^\chi)$. In general not every element $[g]\in PO(\Lambda^\chi)$ extends to the group $P\Gamma$. Let $B^\chi$ be a matrix whose columns represent a basis for the lattice $\Lambda^\chi$ in $\Lambda$. Then we have
\begin{equation}\label{realcondition}P\Gamma^\chi_I = \{ [M]\in PO(\Lambda^\chi) \ ; \ B^\chi M(B^\chi)^{-1} \in \Mat_{n+1}(\mathcal{G}) \}
\end{equation}
so that $P\Gamma^\chi_I$ is the subgroup of $PO(\Lambda^\chi)$ consisting of all elements that extend to unitary transformations of the Gaussian lattice $\Lambda$. 

\begin{thm}
The groups $P\Gamma^\chi$ and $PO(\Lambda^\chi)$ are commensurable.\end{thm}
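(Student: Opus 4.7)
The plan is to exhibit the group $P\Gamma^\chi_I$ of type I elements defined before the theorem as a common finite-index subgroup of both $P\Gamma^\chi$ and $PO(\Lambda^\chi)$. That $P\Gamma^\chi_I$ has index one or two in $P\Gamma^\chi$ is immediate from the dichotomy following Proposition \ref{PGammaChi}, so the real content is to show that $P\Gamma^\chi_I$ is a finite-index subgroup of $PO(\Lambda^\chi)$ under the natural inclusion described in (\ref{realcondition}).

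First I would set up a natural $\mathcal{G}$-sublattice of finite index inside $\Lambda$. Because $\chi$ anticommutes with multiplication by $i$, the lattices $\Lambda^\chi$ and $i\Lambda^\chi$ are the $(+1)$- and $(-1)$-eigenspaces of $\chi$ on $\Lambda\otimes_\mathcal{G}\mathbb{Q}(i)$, so their sum is direct. The resulting $\mathcal{G}$-sublattice $\Lambda^\chi\otimes_\mathbb{Z}\mathcal{G}\cong \Lambda^\chi\oplus i\Lambda^\chi\subset \Lambda$ has the same real rank $2(n+1)$ as $\Lambda$, hence finite index $N=[\Lambda:\Lambda^\chi\otimes_\mathbb{Z}\mathcal{G}]$. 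Next, for any $M\in O(\Lambda^\chi)$, define $\tilde M$ on $\Lambda^\chi\otimes_\mathbb{Z}\mathcal{G}$ by the $\mathcal{G}$-linear rule $\tilde M(x+iy)=M(x)+iM(y)$ and extend $\mathbb{Q}(i)$-linearly to $\Lambda\otimes_\mathcal{G}\mathbb{Q}(i)$. Using $h=(\cdot,\cdot)+i(\rho\cdot,\cdot)$ and the fact that $h$ is real-valued on $\Lambda^\chi$, one checks directly that $\tilde M$ preserves the Hermitian form, and by construction $\tilde M$ commutes with $\chi$ since $\chi$ acts as complex conjugation with respect to the decomposition $\Lambda^\chi\oplus i\Lambda^\chi$. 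In the basis given by the columns of $B^\chi$, the extension $\tilde M$ is precisely the matrix $B^\chi M(B^\chi)^{-1}$ appearing in (\ref{realcondition}).

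The only obstacle to $\tilde M$ lying in $U(\Lambda)$ is now whether $\tilde M(\Lambda)=\Lambda$. Since $\tilde M$ fixes $\Lambda^\chi\otimes_\mathbb{Z}\mathcal{G}$ setwise and the chain $\Lambda^\chi\otimes_\mathbb{Z}\mathcal{G}\subset\Lambda\subset \tfrac{1}{N}(\Lambda^\chi\otimes_\mathbb{Z}\mathcal{G})$ holds, the image $\tilde M(\Lambda)$ is forced to lie among the finitely many $\mathcal{G}$-sublattices between $\Lambda^\chi\otimes_\mathbb{Z}\mathcal{G}$ and $\tfrac{1}{N}(\Lambda^\chi\otimes_\mathbb{Z}\mathcal{G})$. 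The group $O(\Lambda^\chi)$ acts on this finite set by $M\mapsto \tilde M(\Lambda)$, so by orbit-stabilizer the stabilizer of $\Lambda$ has finite index in $O(\Lambda^\chi)$. Passing to $PO(\Lambda^\chi)$ and invoking (\ref{realcondition}), this stabilizer is exactly $P\Gamma^\chi_I$, which completes the proof.

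No genuinely hard step appears; the only care required is in verifying that the $\mathcal{G}$-linear extension $\tilde M$ really does preserve the Hermitian form and commute with $\chi$, and in recognising the stabiliser of $\Lambda$ inside $O(\Lambda^\chi)$ as exactly the preimage of $P\Gamma^\chi_I$ under the quotient by $\{\pm 1\}$. Both verifications are immediate once the eigenspace decomposition $\Lambda\supset\Lambda^\chi\oplus i\Lambda^\chi$ is written down.
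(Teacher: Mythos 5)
Your proof is correct, but it reaches the finiteness of the index by a genuinely different mechanism than the paper. Both arguments share the first step (that $P\Gamma^\chi_I = P\Gamma^\chi \cap PO(\Lambda^\chi)$ has index at most $2$ in $P\Gamma^\chi$) and both hinge on the identification in Equation \ref{realcondition} of $P\Gamma^\chi_I$ with those $[M]\in PO(\Lambda^\chi)$ whose $\mathcal{G}$-linear extension, the matrix $B^\chi M (B^\chi)^{-1}$, preserves $\Lambda$. The paper then works with the matrix $B=B^\chi$ directly: writing $M=1+X$ and using the adjugate identity $(\det B)B^{-1}=B^{\adj}$, it observes that $M\equiv 1 \bmod{(\det B)}$ forces $\det B$ to divide $BXB^{\adj}$, so $P\Gamma^\chi_I$ contains the principal congruence subgroup of level $\det B$ in $PO(\Lambda^\chi)$. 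You instead argue synthetically: $\Lambda^\chi\oplus i\Lambda^\chi$ is a finite-index $\mathcal{G}$-sublattice of $\Lambda$ stabilized by every extension $\tilde{M}$, the subgroups between it and its $\tfrac{1}{N}$-rescaling form a finite set permuted by $O(\Lambda^\chi)$, and orbit--stabilizer does the rest. What the paper's route buys is strictly more arithmetic information: it exhibits $P\Gamma^\chi_I$ as a \emph{congruence} subgroup with an explicit level, hence an effective index bound. What your route buys is generality and coordinate-freedom: it never touches $\det B$ or the adjugate, and it proves verbatim that the stabilizers of any two commensurable lattices inside a common form-preserving group are commensurable. One pedantic point you should make explicit in the last step: Equation \ref{realcondition} literally encodes the inclusion $\tilde{M}(\Lambda)\subseteq\Lambda$, not equality; equality follows either because $P\Gamma^\chi_I$ is a group (so the condition applies to $M^{-1}$ as well) or because a unitary map carrying $\Lambda$ into itself preserves the discriminant and is therefore onto. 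With that remark, your identification of the stabilizer of $\Lambda$ in $O(\Lambda^\chi)$ with the preimage of $P\Gamma^\chi_I$ under projectivization is exact, and the proof is complete.
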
 

\begin{proof}We have seen that the intersection of the two groups is given by
\[ P\Gamma^\chi \cap PO(\Lambda^\chi) = P\Gamma^\chi_I\]
and has at most index $2$ in $P\Gamma^\chi$. We now prove that this intersection is a congruence subgroup of $PO(\Lambda^\chi)$ so that in particular it has finite index. Recall that the adjoint matrix $B^{\adj}$ has coefficients in $\mathcal{G}$ and satisfies $(\det B)B^{-1}=B^{\adj}$. If we write $M=1+X$ then by Equation \ref{realcondition} we have $[M]\in P\Gamma^\chi_I$ if and only if $\det B$ divides $BXB^{\adj}$. This is certainly the case if $\det B$ divides $X$ so if $M \equiv 1 \bmod{(\det B)}$. This implies that $P\Gamma^\chi_I$ contains the principal congruence subgroup 
\[ \{ [M]\in PO(\Lambda^\chi) \ ; \ M \equiv 1 \bmod{(\det B)} \}. \]
\end{proof}

In the examples we encounter the lattice $\Lambda^\chi$ is reflective so that the reflections generate a finite index subgroup in $PO(\Lambda^\chi)$. By the results of Section \ref{Vinberg} the group $PO(\Lambda^\chi)$ is of the form $W(C)\rtimes S(C)$ where $C\subset \mathbb{B}^\chi$ is a Coxeter polytope  of finite volume, $W(C)$ its reflection group and $S(C)$ a group of automorphisms of $C$. The polytope $C$ can be determined by Vinberg's algorithm. We will see  that in many cases the reflection subgroup of the group $P\Gamma^\chi_I$ is also of finite index. This can be determined by  applying Vinberg's algorithm with the condition that in every step we only accept roots $r$ such that the reflection $s_r \in PO(\Lambda^\chi)$ satisfies Equation \ref{realcondition}. This is equivalent to the condition
\begin{equation}\label{realrootcondition}
\frac{2r}{h(r,r)} \in \Lambda^\vee.
\end{equation}

We finish this section by describing how a root mirror $H_r \in \mathcal{H}$ can meet the real ball $\mathbb{B}^\chi$. This intersection can be of codimension one or two as shown by the following lemma.

\begin{lem}
Suppose $r \in \Lambda$ is a root such that $\mathbb{B}^\chi \cap H_r \neq \emptyset$. Then $\mathbb{B}^\chi \cap H_r$ is equal to $\mathbb{B}^\chi \cap L$ with $L^\perp$ a lattice in $\Lambda^\chi$ of type $A_1,A_1(2),A_1\oplus A_1(2)$ or $A_1(2)^2$.
\end{lem}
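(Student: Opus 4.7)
The plan is to exploit the antiunitarity of $\chi$ to reduce the description of $L^\perp$ to a short case analysis on the pair $\{r,\chi(r)\}$. First I observe that for $v\in\Lambda^\chi\otimes\mathbb{R}$ one has $h(v,\chi r)=\overline{h(\chi v,\chi^2 r)}=\overline{h(v,r)}$, so the conditions $h(v,r)=0$ and $h(v,\chi r)=0$ coincide on $\Lambda^\chi\otimes\mathbb{R}$. Consequently $\mathbb{B}^\chi\cap H_r=\mathbb{B}^\chi\cap H_r\cap H_{\chi r}$, and in particular $H_r\cap H_{\chi r}\neq\emptyset$ in $\mathbb{B}$. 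Lemma~\ref{MirrorIntersect} then forces one of three cases: (i) $[\chi r]=[r]$; (ii) $h(r,\chi r)=0$; (iii) $\{r,\chi r\}$ spans a sublattice of type $\Lambda_2$, i.e.\ $h(r,\chi r)\in\mathcal{G}^\ast(1+i)$. In each case I define $L^\perp$ to be the $\chi$-fixed part of the primitive hull in $\Lambda$ of $\mathcal{G} r+\mathcal{G}\chi r$; the set $L$ is then its orthogonal complement in $\Lambda^\chi$ and by construction $\mathbb{B}^\chi\cap H_r=\mathbb{B}^\chi\cap L$.

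In case (i), write $\chi(r)=\lambda r$ with $\lambda\in\mathcal{G}^\ast$. Replacing $r$ by $\mu r$ changes $\lambda$ to $\bar\mu^2\lambda$, so after a unit rescaling I may arrange $\lambda\in\{1,i\}$. If $\lambda=1$ then $r\in\Lambda^\chi$ with $(r,r)=\Real h(r,r)=-2$, yielding $L^\perp\cong A_1$. If $\lambda=i$ then $(1+i)r\in\Lambda^\chi$; it is $\mathbb{Z}$-primitive because $r$ is $\mathcal{G}$-primitive and $(1+i)$ is the unique prime of $\mathcal{G}$ over $2$, and its real norm equals $-4$, giving $L^\perp\cong A_1(2)$.

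In cases (ii) and (iii) the $\chi$-fixed part of $\mathcal{G} r+\mathcal{G}\chi r$ consists of the vectors $ar+\bar a\chi r$ with $a\in\mathcal{G}$, and admits the $\mathbb{Z}$-basis $w_1=r+\chi r$, $w_2=i(r-\chi r)$. Setting $c=h(r,\chi r)$, a direct computation from $h(r,r)=h(\chi r,\chi r)=-2$ produces $(w_1,w_1)=-4+2\Real c$, $(w_2,w_2)=-4-2\Real c$, and $(w_1,w_2)=2\,\mathrm{Im}\,c$. When $c=0$ this is $\diag(-4,-4)$, so $L^\perp\cong A_1(2)^2$. When $|c|^2=2$, a unit rescaling of $r$ lets me take $c=1+i$; the Gram matrix becomes $\bigl(\begin{smallmatrix}-2&2\\2&-6\end{smallmatrix}\bigr)$, and the change of basis $v_1=w_1$, $v_2=w_1+w_2$ diagonalises it to $\diag(-2,-4)$, whence $L^\perp\cong A_1\oplus A_1(2)$.

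The main technical point to watch is primitivity: in each case the exhibited generators must be shown to span the full primitive hull in $\Lambda^\chi$, not merely a finite-index sublattice. This is where the standing integrality hypothesis $\Lambda\subset(1+i)\Lambda^\vee$ of Section~\ref{Gaussian} is essential, since any hypothetical extra vector of the form $(ar+\bar a\chi r)/\pi$ lying in $\Lambda^\chi$ for a Gaussian prime $\pi$ would force pairings $h(\cdot,r)$ outside $(1+i)\mathcal{G}$; a short check using the factorisation $2=-i(1+i)^2$ rules this out in all four subcases. Once primitivity is settled the four lattice types are read off from the Gram matrices above.
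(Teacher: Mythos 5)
Your main argument is correct and follows essentially the same route as the paper: reduce to the nonempty intersection $H_r\cap H_{\chi r}$ (the paper phrases this via $x$ being fixed by $s_r$ and $s_{\chi r}$, you via the identity $h(v,\chi r)=\overline{h(v,r)}$ on $\Lambda^\chi\otimes\mathbb{R}$ --- same content), invoke Lemma~\ref{MirrorIntersect}, and do the case analysis on $\{r,\chi r\}$. Where the paper cites Lemma~\ref{fixedblocks} for the $\Lambda_2$ case and calls the orthogonal case a ``straightforward computation,'' you compute the Gram matrices explicitly, and those computations are right. One small inaccuracy: replacing $r$ by $\mu r$ with $\mu\in\mathcal{G}^\ast$ changes $c=h(r,\chi r)$ to $\bar\mu^2c=\pm c$, so a unit rescaling alone does not reach $c=1+i$ from $c=1-i$; you must also swap $r\leftrightarrow\chi r$ (replacing $c$ by $\bar c$), or simply note that your Gram matrix diagonalises to $\diag(-2,-4)$ for every $c$ with $\vert c\vert^2=2$.

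The genuine error is in your last paragraph, together with your definition of $L^\perp$ as the $\chi$-fixed part of the \emph{primitive hull} of $\mathcal{G}r+\mathcal{G}\chi r$. The hull can be strictly larger than the span, your proposed integrality check cannot rule this out, and when it happens your case (ii) computation no longer computes the lattice you defined. Concretely, take $\Lambda=\Lambda_{1,2}=\Lambda_2\oplus(2)$ with $\chi=\psi_2\oplus\psi_1$ (both from the paper), let $f_1,f_2$ be the standard basis of $\Lambda_2$ and $r=f_1+f_2$. Then $\chi r=if_1+f_2$, one checks $h(r,\chi r)=0$ and $[\chi r]\neq[r]$, and the generator of the $(2)$-summand gives a point of $\mathbb{B}^\chi\cap H_r$, so this is your case (ii). But the primitive hull of $\mathcal{G}r+\mathcal{G}\chi r$ is all of $\Lambda_2$, whose $\chi$-fixed part is the $A_1^2$ spanned by $v_1=(1+i)f_1+f_2$ and $v_2=f_2$ (Table~\ref{fixedblockstable}); your $w_1=r+\chi r=v_1+v_2$ and $w_2=i(r-\chi r)=v_1-v_2$ span only the index-two sublattice $A_1(2)^2$. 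Moreover $h(v_1,r)=-1+i$ and $h(v_2,r)=-1-i$ both lie in $(1+i)\mathcal{G}$, so the pairing obstruction you invoke simply is not there, and no check ``using $2=-i(1+i)^2$'' can succeed --- the phenomenon genuinely occurs. Note also that $A_1^2$ is not on the lemma's list, so a reading of the lemma requiring $L^\perp$ primitive would actually be \emph{false}; the statement (and the paper's proof) must be read with $L^\perp$ merely a sublattice of $\Lambda^\chi$, which suffices because a finite-index sublattice has the same orthogonal complement in $\mathbb{B}^\chi$, so $\mathbb{B}^\chi\cap H_r=\mathbb{B}^\chi\cap L$ holds with $L^\perp=\mathbb{Z}w_1+\mathbb{Z}w_2\cong A_1(2)^2$ regardless of saturation. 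The repair is easy: define $L^\perp$ as the $\mathbb{Z}$-span of the fixed vectors you exhibit in each case and delete the final paragraph; with that change your proof is correct and essentially identical to the paper's.
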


\begin{proof}
If $x\in H_r \cap \mathbb{B}^\chi$ then $x$ is fixed by both $s_r$ and $s_{\chi r}$ so the intersection $H_r \cap H_{\chi r}$ is nonempty and we are in the situation of Lemma \ref{MirrorIntersect}. Suppose that $\chi [r] =[r]$. If $\chi r = \pm r$  then either $r$ or $ir$ is a root of $\Lambda^\chi$. Both have norm $-2$ so they span a root system of type $A_1$. If $\chi r = \pm i r$ then one of $(1\pm i)r$ is a root of $\Lambda^\chi$. Both have norm $-4$ so they span a root system of type $A_1(2)$. If $\chi [r] \neq [r]$ then the roots $r$ and $\chi r$ span a rank two Gaussian lattice that is either $(-2)\oplus (-2)$ or $\Lambda_2$ according to Lemma \ref{MirrorIntersect}. The involution $\chi$ acts on these lattices as the antiunitary involution $\psi_2^\prime$. The fixed point lattice for $(-2)\oplus (-2)$ is $A_1(2)^2$ as follows from a straightforward computation. For $\Lambda_2$ we get the fixed point lattice $A_1\oplus A_1(2)$ as follows from Lemma \ref{fixedblocks}.\end{proof}

\subsection{Examples}

\subsubsection*{The Gaussian lattice $\Lambda_{1,2}$}\label{gamma12}

The lattice $\Lambda_{1,2}=\Lambda_2\oplus (2)$ of signature $(1,2)$ is related to the moduli space $\mathcal{M}(321^3)$ of eight-tuples of points on $\mathbb{P}^1$ such that there are unique points of multiplicity $3$ and $2$ and three distinct points of multiplicity $1$. We study the antiunitary involutions of this lattice in some detail. Using Table \ref{fixedblockstable} we can immediately write down two antiunitary involutions of $\Lambda_{1,2}$, namely $\psi_2\oplus \psi_1$ and $\psi_2^\prime \oplus \psi_1$. We will prove that their projective classes are distinct modulo conjugation in $P\Gamma=PU(\Lambda_{1,2})$. There is however a another antiunitary involution of $\Lambda_{1,2}$ given by $\psi_3 =M_3\circ \conj$ where $M_3$ is the complicated matrix
\[ M_3=\begin{pmatrix}-2+i&2-2i&-2-2i\\2&-1&2i\\1+3i&-2-2i&-3+2i\end{pmatrix}.\] 
This antiunitary involution takes on a much simpler form if we change to a different basis for $\Lambda_{1,2}$ as shown by the following lemma.

\begin{lem}\label{Gausiso}
The Gaussian lattices $\Lambda_2\oplus (2)$ and $(-2)\oplus \Lambda_{1,1}$ are isomorphic. The antiunitary involution $\psi_3$ of $\Lambda_2\oplus (2)$ maps to the antiunitary involution $\psi_1\oplus \psi_2$ of $(-2)\oplus \Lambda_{1,1}$ under this isomorphism. 
\end{lem}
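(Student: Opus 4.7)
The plan is to prove the lemma by exhibiting an explicit change-of-basis matrix $N \in GL_3(\mathcal{G})$ whose columns provide a $\mathcal{G}$-basis of $\Lambda_2 \oplus (2)$ realizing the Hermitian Gram matrix of $(-2) \oplus \Lambda_{1,1}$, and then checking directly that $N$ also conjugates the defining matrix $M_3$ of $\psi_3$ into the matrix of $\psi_1 \oplus \psi_2$.

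To construct $N$, I would search for vectors $v_1, v_2, v_3$ in $\Lambda_2 \oplus (2)$ whose pairwise Hermitian products are prescribed by $H_2 = \mathrm{diag}(-2, H_{\Lambda_{1,1}})$, namely $h(v_1, v_1) = -2$, $h(v_2, v_2) = h(v_3, v_3) = 0$, $h(v_2, v_3) = 1+i$, and $v_1 \perp v_2, v_3$. A natural starting point is the isotropic vector $v_2 = e_1 + e_3$ (which is isotropic since the $\Lambda_2$ and $(2)$ summands contribute $-2$ and $+2$). A short search then produces $v_3 = e_1 + e_2 + e_3$, which is isotropic and satisfies $h(v_2, v_3) = 1+i$. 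Finding $v_1$ of norm $-2$ in $v_2^\perp \cap v_3^\perp$ reduces to a small linear problem over $\mathcal{G}$; tracking the divisibility by $1+i$ that appears forces the coordinates to lie in a sub-$\mathcal{G}$-module, yielding an explicit primitive candidate. The matrix $N$ with columns $v_1, v_2, v_3$ then has unit determinant in $\mathcal{G}$, so it defines a $\mathcal{G}$-lattice isomorphism with $\overline{N}^t H_1 N = H_2$ by construction.

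For the intertwining, Table \ref{fixedblockstable} tells us that $\psi_1 \oplus \psi_2$ on $(-2) \oplus \Lambda_{1,1}$ has matrix $\mathrm{diag}(1, i, 1)$, so the requirement is the matrix identity $M_3 \overline{N} = N \cdot \mathrm{diag}(1, i, 1)$. Equivalently, $v_1$ and $v_3$ must be fixed by $\psi_3$ while $v_2$ must satisfy $\psi_3(v_2) = i v_2$. I would therefore refine the search above by first computing the fixed lattice $\Lambda^{\psi_3}$ (which by the invariant of Lemma \ref{ClassCriterion} ought to be $A_1 \oplus U(2)$) and the $i$-eigenlattice of $\psi_3$, then drawing $v_1, v_3$ from the former and $v_2$ from the latter while imposing the Hermitian orthogonality and pairing relations listed above.

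The main obstacle is the simultaneous coordination of these two sets of constraints: a generic Hermitian-compatible triple need not respect the eigenspace decomposition of $\psi_3$, while a generic eigenbasis for $\psi_3$ need not have the prescribed Gram matrix. The resolution is to first fix the $\psi_3$-eigenspace decomposition of $\Lambda_2 \oplus (2)$ (a three-dimensional $\mathbb{Z}$-lattice of fixed vectors plus its $\rho$-image of $i$-eigenvectors), and then parametrize bases within these eigenlattices by the remaining orthogonal groups $U(A_1) = \{\pm 1\}$ on the $v_1$ slot and $U(U(2))$ on the $(v_2, v_3)$ slot, using this freedom to match the off-diagonal Hermitian value $h(v_2, v_3) = 1+i$ exactly. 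Once this coordination is achieved, the resulting $N$ witnesses both assertions of the lemma simultaneously.
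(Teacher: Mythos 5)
Your proposal is correct and takes essentially the same route as the paper: the paper's proof also exhibits an explicit unimodular matrix $B\in GL_3(\mathcal{G})$ satisfying the Hermitian base-change relation $\overline{B}^t\left(\Lambda_2\oplus(2)\right)B=(-2)\oplus\Lambda_{1,1}$ together with the intertwining identity $B(\psi_1\oplus\psi_2)\overline{B}^{-1}=\psi_3$, which is exactly your condition $M_3\overline{N}=N\diag(1,i,1)$. Your eigenlattice-constrained search correctly formulates what such a matrix must satisfy (the paper simply states $B$, whose first column $(1+i,1-i,1)^t$ is indeed a $\psi_3$-fixed vector of norm $-2$, with the remaining columns isotropic and pairing to $1+i$, as in your setup), so the two arguments differ only in that you describe how to find the matrix while the paper displays it.
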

\begin{proof}The underlying $\mathbb{Z}$-lattices of the Gaussian lattices $\Lambda_2\oplus (2)$ and $(-2)\oplus \Lambda_{1,1}$ are $D_4\oplus (2)$ and $U\oplus U(2)\oplus A_1$. Both are even $2$-elementary lattices and the invariants $(r_+,r_-,l,\delta)$ of Theorem \ref{2invariants} are easily seen to be $(1,2,3,1)$ for both lattices hence they are isomorphic. An explicit base change is given by $\overline{B}^t(\Lambda_2\oplus (2))B=(-2)\oplus \Lambda_{1,1}$ for the unimodular matrix
\[ B = \begin{pmatrix}1+i&i&0\\1-i&0&1\\1&1&i\end{pmatrix} . \]
The final statement follows from the equality $B(\psi_1\oplus \psi_2)\overline{B}^{-1}=\psi_3$.
\end{proof}

\begin{prop}The projective classes of the three antiunitary involutions $\chi$ given by $\psi_2\oplus (2), \ \psi_2^\prime \oplus (2)$ and $\psi_3$ of $\Lambda_{1,2}$ are distinct modulo conjugation in $P\Gamma$. The groups $P\Gamma^\chi$ of these involutions are hyperbolic Coxeter groups and their Coxeter diagrams are shown in Table \ref{threeclasses}. \end{prop}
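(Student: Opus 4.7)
\smallskip

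\noindent\textbf{Proof proposal.} The argument divides into two independent tasks: (i) distinguishing the three projective classes, and (ii) computing the Coxeter diagrams of the three real arithmetic groups.

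For (i) the plan is to apply Lemma~\ref{ClassCriterion}: I will compute the unordered pair $(\Lambda^{\chi},\Lambda^{i\chi})$ for each of the three involutions and verify that no two pairs are isomorphic. For $\chi=\psi_2\oplus\psi_1$ and $\chi=\psi_2'\oplus\psi_1$ the fixed point lattices split as direct sums of the blocks listed in Table~\ref{fixedblockstable}, and the $i\chi$-fixed lattices have the same shape on the $\Lambda_2$-summand thanks to Lemma~\ref{chiandichi}, while on the $(2)$-summand the antiunitary involution $i\psi_1$ produces $(4)$ instead of $(2)$. For $\chi=\psi_3$ I first invoke Lemma~\ref{Gausiso} to rewrite the pair as $\psi_1\oplus\psi_2$ acting on $(-2)\oplus\Lambda_{1,1}$, whence Table~\ref{fixedblockstable} and Lemma~\ref{chiandichi} again give the fixed lattices directly. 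The resulting unordered pairs are
\[
\{A_1^2\oplus(2),\,A_1^2\oplus(4)\},\ \{A_1\oplus A_1(2)\oplus(2),\,A_1\oplus A_1(2)\oplus(4)\},\ \{A_1\oplus U(2),\,A_1(2)\oplus U(2)\},
\]
and I will distinguish them by comparing discriminants and the set of vector norms (for instance $A_1\oplus U(2)$ has determinant $8$ while the lattices in the second pair have determinants $16$ and $32$; and $A_1(2)\oplus U(2)$ has no vector of norm $+2$, whereas $A_1\oplus A_1(2)\oplus(2)$ does). Using Lemma~\ref{simplifylattices} to normalize along the way keeps the bookkeeping clean.

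For (ii), for each $\chi_j$ I will run Vinberg's algorithm of Section~\ref{Vinberg} on the hyperbolic lattice $\Lambda^{\chi_j}$ (all three of rank three, so $\mathbb{B}^{\chi_j}$ is the hyperbolic plane and the fundamental domain is a polygon). The twist is that I only accept roots $r$ of $\Lambda^{\chi_j}$ that satisfy the integrality condition~\eqref{realrootcondition}, so that the reflection $s_r$ lies in $P\Gamma^{\chi_j}_I$; concretely this restricts which norms $-2,-4,-8$ are allowed in each of the three cases and accounts for the appearance of subdivided nodes in the diagrams of Table~\ref{threeclasses}. A natural controlling vector is the positive-norm generator of the $(2)$- or $U(2)$-summand; I then list admissible roots by increasing height~$h(r_i)=-2(r_i,p)^2/(r_i,r_i)$, admit those with nonnegative pairing against all earlier accepted roots, and terminate once Vinberg's finite-volume criterion holds (every rank-$(n{-}1)$ elliptic subdiagram extends in exactly two ways).

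The expected main obstacle is the second task: one must carefully enumerate the admissible roots up to the height at which the algorithm closes up, and verify by explicit inner-product calculations that the Coxeter polygon obtained matches Table~\ref{threeclasses}. Once the diagram is in hand, the dihedral angles (or parallel/ultraparallel relations) at each edge follow from the formula $(r_i,r_j)^2=(r_i,r_i)(r_j,r_j)\cos^2(\pi/m_{ij})$, and the claim that $P\Gamma^{\chi_j}$ is itself a hyperbolic Coxeter group reduces to checking that the resulting polygon admits no nontrivial diagram automorphism extending it (equivalently, that no type-II element sits above the reflection subgroup). Since the reflection subgroup has finite index in $PO(\Lambda^{\chi_j})$ and the latter is commensurable with $P\Gamma^{\chi_j}$ by the theorem preceding Equation~\eqref{realrootcondition}, this last verification can be made by a direct inspection of the symmetries of the computed polygon.
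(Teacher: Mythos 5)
Your part (i) is essentially the paper's own proof: the authors likewise invoke Lemma~\ref{ClassCriterion}, read the fixed point lattices off Table~\ref{fixedblockstable} (using Lemma~\ref{Gausiso} to convert $\psi_3$ into $\psi_1\oplus\psi_2$ on $(-2)\oplus\Lambda_{1,1}$, and Lemma~\ref{chiandichi} for the $i\chi$-blocks), normalize with Lemma~\ref{simplifylattices}, and then separate the classes by invariants. Your pairs agree with Table~\ref{threeclasses} after applying Lemma~\ref{simplifylattices} (e.g.\ $A_1\oplus U(2)\cong(2)\oplus A_1^2$ and $A_1^2\oplus(4)\cong(2)\oplus A_1\oplus A_1(2)$), and your two discriminating invariants are sound: the discriminants separate $\psi_2'\oplus\psi_1$ from the other two, and your norm observation --- all norms in $A_1(2)\oplus U(2)$ lie in $4\mathbb{Z}$ while $(2)\oplus A_1\oplus A_1(2)$ contains a norm-$2$ vector --- is exactly the paper's ``scale by $\tfrac12$ and compare even versus odd'' argument in different clothing. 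One small bookkeeping caution: since the pairs are unordered and $(2)\oplus A_1^2\cong A_1\oplus U(2)$, the separation of $\psi_2\oplus\psi_1$ from $\psi_3$ rests entirely on $A_1^2\oplus(4)\not\cong A_1(2)\oplus U(2)$, so make that the comparison you actually write down.

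For part (ii) you should know that the paper's written proof is silent on the Coxeter diagrams (the printed Table~\ref{threeclasses} contains only the fixed point lattices; details are deferred to the reference \cite{Rieken}), so here you go beyond the paper --- and your plan is the paper's own general recipe, the one it executes later for $\Lambda_{1,6}^{\chi_1}$: run Vinberg's algorithm on $\Lambda^{\chi}$ accepting only roots satisfying Equation~\eqref{realrootcondition}. Two corrections to that plan. First, the absence of type~II elements does not need any inspection of the polygon: by the paper's type~I/II dichotomy, a type~II element exists only if $\Lambda^{\chi}\cong\Lambda^{i\chi}$, and your own part~(i) computation shows the two lattices in each pair have different discriminants, so $P\Gamma^{\chi}=P\Gamma^{\chi}_I$ automatically. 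Moreover your claimed equivalence ``no diagram automorphism $\Leftrightarrow$ no type~II element'' is false in general: type~I elements of $P\Gamma^{\chi}_I$ can stabilize the chamber and act as diagram automorphisms (this is exactly what happens for $\Lambda_{1,6}^{\chi_1}$, where $\Aut(C_6)\cong S_4$ sits inside $P\Gamma^{\chi_1}$), so to conclude that $P\Gamma^{\chi}$ is a Coxeter group you must check, via Equation~\eqref{realcondition}, that no symmetry of the computed polygon lies in the group --- not merely that there are no type~II elements. Second, a minor slip: $U(2)$ has no positive-norm generator (its standard basis vectors are isotropic), so for the lattice $A_1(2)\oplus U(2)$ take a controlling vector such as $e+f$ of norm $4$ rather than ``the positive-norm generator of the $U(2)$-summand.''
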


\begin{proof}
We will use Lemma \ref{ClassCriterion} to show that the projective classes of the three antiunitary involutions are not $P\Gamma$-conjugate. For this we need to calculate the fixed point lattices of $\chi$ and $i\chi$ for all three antiunitary involutions. These can be read off from Table \ref{fixedblockstable} for the antiunitary involutions $\psi_2\oplus \psi_1$ and $\psi_2^\prime \oplus \psi_1$. For $\psi_3$ we use Lemma \ref{Gausiso} combined with Table \ref{fixedblockstable}. We also use Lemma \ref{simplifylattices} to simplify the lattices. For example one has
\begin{align*}
\Lambda_{1,2}^{i({\psi_2\oplus \psi_1})} &\cong (4) \oplus A_1^2 \\
&\cong (2) \oplus A_1 \oplus A_1(2)
\end{align*}
where the first isomorphism follows from Table \ref{fixedblockstable} and Lemma \ref{chiandichi} and the second follows from Lemma \ref{simplifylattices}. The results are listed in Table \ref{threeclasses}. The lattices $(2) \oplus A_1\oplus A_1(2)$ and $U(2) \oplus A_1(2)$ in this table are not isomorphic. Indeed, if we scale them by a factor $\frac{1}{2}$ then one is even while the other is not. This proves that the $P\Gamma$-conjugation classes of $\psi_2\oplus \psi_1$ and $\psi_3$ are distinct. We can distinguish the fixed point lattices of $\psi_2^\prime \oplus \psi_1$ from the previous two by calculating their discriminants.

\begin{table}
\centering
\[
\begin{array}{lll}
\toprule
\chi & \Lambda_{1,2}^\chi & \Lambda_{1,2}^{i\chi} \\
\midrule
\psi_2\oplus \psi_1 & (2)\oplus A_1^2 & (2) \oplus A_1 \oplus A_1(2) \\
\psi_3 & (2) \oplus  A_1^2  & U(2) \oplus A_1(2)  \\
\psi_2^\prime \oplus \psi_1 & (2) \oplus A_1 \oplus A_1(2) & (2) \oplus A_1(2)^2 \\
\bottomrule
\end{array}
\]
\caption{The three classes of antiunitary involutions of the lattice $\Lambda_{1,2}$. }
\label{threeclasses}
\end{table}
\end{proof}

The moduli space $\mathcal{M}(321^3)$ has three connected components so the three projective classes actually form a complete set of representatives for $P\Gamma$-conjugation classes of antiunitary involutions in $\Lambda_{1,2}$. For more information we refer to \cite{Rieken} Section 3.5.

\subsubsection*{The Gaussian lattice $\Lambda_{1,6}$} \label{Lambda16}

The lattice $\Lambda_{1,6} = \Lambda_2^3\oplus (2)$ is related to the moduli space of plane quartic curves. In this section we collect some useful properties of this lattice that will be used in later sections. We start by introducing a very convenient basis.\begin{lem}There is a basis $\{e_1,\ldots,e_7\}$ for $\Lambda_{1,6}$ so that the basis vectors are enumerated by the vertices of the Coxeter diagram of type $E_7$ as in Figure \ref{VinCox}. By this we mean that the basis satisfies
\[
h(e_i,e_j) = \begin{cases}
-2 & \text{if } i=j \\
1+\sign(j-i)i & \text{if } $i,j$ \text{ connected} \\
0 & \text{else}.
\end{cases}
\]
\end{lem}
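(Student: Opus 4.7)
The plan is to exhibit an explicit $\mathcal{G}$-basis $\{e_1,\ldots,e_7\}$. I start with the natural $\mathcal{G}$-basis $\{\alpha_k,\beta_k\}_{k=1,2,3}\cup\{v\}$ of $\Lambda_{1,6}=\Lambda_2^{\oplus 3}\oplus(2)$, where each pair $\{\alpha_k,\beta_k\}$ is the standard basis of the $k$-th $\Lambda_2$-summand (so $h(\alpha_k,\alpha_k)=h(\beta_k,\beta_k)=-2$ and $h(\alpha_k,\beta_k)=1+i$) and $v$ generates $(2)$ (so $h(v,v)=2$). In this basis the Gram matrix is the block-diagonal Hermitian matrix $H_0$ of equation \eqref{intromatrix}.

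Since each $\Lambda_2$-pair already realizes the Gram matrix of a single edge of $E_7$, the first step is to identify three disjoint edges of the $E_7$-diagram with the three $\Lambda_2$-summands. The substantive work is then to construct the ``bridging'' vectors --- in particular, the branch vertex of $E_7$, which has degree $3$ and must therefore have nonzero Hermitian inner product with basis vectors coming from distinct $\Lambda_2$-summands. Here the positive-norm generator $v$ is essential: every vector of norm $-2$ inside the negative definite lattice $\Lambda_2^{\oplus 3}$ is necessarily concentrated in a single $\Lambda_2^{(k)}$, because the minimum nonzero norm in $\Lambda_2=D_4$ is $-2$ and a sum of nonzero contributions from different summands has norm at most $-4$. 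A crossing vector of norm $-2$ must therefore include a nonzero multiple of $v$, whose contribution $+2|c|^{2}$ offsets the additional negative norm from the two crossing $\Lambda_2$-summands. Each bridging vector is then pinned down by a small system of $\mathcal{G}$-linear equations encoding the prescribed inner products with the previously chosen basis vectors, together with the norm equation $h(e_i,e_i)=-2$.

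Once an explicit $7\times 7$ change-of-basis matrix $P$ whose columns express $e_1,\ldots,e_7$ in the natural basis is in hand, the proof reduces to two mechanical checks: the Hermitian identity $\overline{P}^{\,t} H_0\, P = H$, where $H$ is the $E_7$-shaped Hermitian matrix prescribed in the statement, and the unimodularity condition $\det P\in\mathcal{G}^{\ast}=\{\pm 1,\pm i\}$, which guarantees that $\{e_1,\ldots,e_7\}$ spans $\Lambda_{1,6}$ over $\mathcal{G}$.

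I expect the main obstacle to be the combinatorial task of finding a $P$ that simultaneously realizes all the inner-product constraints and is unimodular: the constraints are rigid, small sign or unit-factor errors are easily made, and most near-solutions will fail the unimodularity check. A conceptually cleaner but less hands-on backup would be to verify abstractly that the Gaussian lattice defined by $H$ is isomorphic to $\Lambda_{1,6}$ --- compute $\det H = 16$ and signature $(1,6)$ from the leading principal minors, show that the underlying rank-$14$ $\mathbb{Z}$-lattice is even and $2$-elementary with the same Nikulin invariants as $D_4^{\oplus 3}\oplus \langle 2\rangle^{\oplus 2}$ and invoke Proposition \ref{2invariants}, and finally check that the multiplication-by-$i$ endomorphisms correspond under the resulting isomorphism to promote it to an isomorphism of Gaussian lattices.
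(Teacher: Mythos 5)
Your proposal matches the paper's proof in essence: the paper simply exhibits an explicit change-of-basis matrix $B_{E_7}$ whose columns express $e_1,\ldots,e_7$ in the natural basis of $\Lambda_2^{\oplus 3}\oplus(2)$, with the Gram identity $\overline{B}^{\,t}H_0B=H$ and unimodularity over $\mathcal{G}$ as the (implicit) mechanical verification, exactly as you describe. Your structural observations are also consistent with the paper's matrix --- its two bridging columns (the degree-$\geq 2$ vertices $e_2$ and $e_4$) each use the positive-norm generator $v$ with coefficient $1$ to offset the $-4$ coming from two $\Lambda_2$-summands, just as your norm argument predicts.
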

\begin{proof}
An example of such a basis is given by the column vectors of the matrix
\[ B_{E_7} =
\begin{pmatrix}
1&-1-i&0&0&0&0&0\\
0&-1&0&0&0&0&0\\
0&-1-i&1&-1-i&0&0&0\\
0&-1&0&-1&0&0&1\\
0&0&0&-1-i&1&0&0\\
0&0&0&-1&0&1&0\\
0&1&0&1&0&0&0
\end{pmatrix}.
\]
\end{proof}
The tetraflections $t_{e_i}\in U(\Lambda_{1,6})$ with $i=1,\ldots,7$ satisfy the commutation and braid relations of the Artin group $A(E_7)$ of type $E_7$ so that they induce a representation $A(E_7)\rightarrow U(\Lambda_{1,6})$ by tetraflections. In fact this homomorphism extends to an epimorphism $A(\widetilde{E}_7) \rightarrow U(\Lambda_{1,6})$ as follows from \cite{HeckmanLooijenga,Looijenga2}. Hence the lattice $\Lambda_{1,6}$ is tetraflective.

\begin{prop}\label{reduction}Let $V$ be the orthogonal vectorspace over $\mathbb{F}_2$ defined by 
\[ V = \Lambda_{1,6}/(1+i)\Lambda_{1,6} \cong (\mathbb{F}_2)^7 \]
with the invariant quadratic form $q(x) \equiv \frac{1}{2}h(x,x) \bmod{2}$.  
Reduction modulo $(1+i)$ induces a surjective homomorphism
\[ U(\Lambda_{1,6}) \rightarrow O(V,q) \cong W(E_7)^+. \]
where we denote by $W(E_7)^+$ the Weyl group of type $E_7$ divided modulo its center $\{\pm 1\}$. This group is generated by the images of the tetraflections $t_{e_i}$ with $i=1,\ldots,7$.
\end{prop}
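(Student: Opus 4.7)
The plan is to verify that reduction modulo $(1+i)$ gives a well-defined homomorphism $U(\Lambda_{1,6}) \to O(V,q)$, compute the images of the tetraflections $t_{e_i}$ explicitly, identify the resulting quadratic space with the mod-$2$ reduction of the $E_7$ root lattice, and invoke the classical surjection $W(E_7) \twoheadrightarrow O(E_7/2E_7)$ with kernel $\{\pm 1\}$.

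First I would check the basic setup. The ideal $(1+i)$ is prime of norm $2$ in $\mathcal{G}$, so $\mathcal{G}/(1+i)\cong \mathbb{F}_2$ and $V$ has dimension $7$ over $\mathbb{F}_2$. For well-definedness of $q$, expand
\[
h\bigl(x+(1+i)y,\,x+(1+i)y\bigr) - h(x,x) = 2\operatorname{Re}\bigl((1-i)h(y,x)\bigr) + 2h(y,y);
\]
the standing assumption $\Lambda_{1,6}\subset (1+i)\Lambda_{1,6}^\vee$ places $(1-i)h(y,x)$ in $2\mathcal{G}$, so both summands lie in $4\mathbb{Z}$ and $q$ descends to $V$. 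A short calculation shows its polarization $B(x,y) = q(x+y)-q(x)-q(y)$ equals $\operatorname{Re}h(x,y) \bmod 2$, and that this in turn agrees with $h(x,y)/(1+i) \bmod 2$ (writing $h(x,y) = (1+i)(a+bi)$ both reduce to $a+b$). Any $g \in U(\Lambda_{1,6})$ preserves $h$ and therefore $q$, giving a homomorphism $U(\Lambda_{1,6}) \to O(V,q)$.

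Next I would compute the induced action of the tetraflection $t_{e_i}$. Since $h(e_i,x) \in (1+i)\mathcal{G}$ and $-\tfrac{1-i}{2} = -\tfrac{1}{1+i}$, the defining formula reduces modulo $(1+i)\Lambda_{1,6}$ to
\[
\overline{t_{e_i}}(\bar x) \;=\; \bar x + B(\bar e_i, \bar x)\,\bar e_i,
\]
the orthogonal transvection in the anisotropic vector $\bar e_i$ (well-defined because $q(\bar e_i)=1$). Reading off the Hermitian Gram matrix of the $E_7$-basis, one finds $q(\bar e_i)=1$ for all $i$ and $B(\bar e_i,\bar e_j)=1$ precisely when $i,j$ are adjacent in the $E_7$ Coxeter diagram; this is exactly the quadratic form inherited by $E_7/2E_7$ from the $E_7$ inner product expressed in a basis of simple roots. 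Thus $(V,q) \cong (E_7/2E_7, q_{E_7})$ as $\mathbb{F}_2$-quadratic spaces, and under the identification matching $\bar e_i$ with the reduced simple root $\bar\alpha_i$, the transvection $\overline{t_{e_i}}$ corresponds to the mod-$2$ reduction of the simple reflection $s_{\alpha_i} \in W(E_7)$.

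Finally I would appeal to the classical fact that reduction modulo $2$ produces a short exact sequence $1 \to \{\pm 1\} \to W(E_7) \to O(E_7/2E_7, q_{E_7}) \to 1$, yielding the identification $O(V,q)\cong W(E_7)^+$. Because $W(E_7)$ is generated by the $s_{\alpha_i}$, the target $O(V,q)$ is generated by the transvections $\overline{t_{e_i}}$, so the homomorphism $U(\Lambda_{1,6}) \to O(V,q)$ is surjective and its image is already realized by the tetraflections attached to the chosen $E_7$-basis. The main obstacle is the identification step: one has to match the distinguished basis $\bar e_i$ with the simple roots of $E_7$ in a way that transports $q$ to $q_{E_7}$, which is what makes the classical result about $W(E_7)$ acting on $E_7/2E_7$ applicable; everything else is a direct computation or a citation to that standard fact.
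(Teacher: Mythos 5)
Your proposal is correct and follows essentially the same route as the paper's proof: reduce the tetraflections $t_{e_i}$ modulo $(1+i)$, observe they act as the simple generating reflections of $W(E_7)$ on $V$, identify $(V,q)$ with $(Q/2Q,q')$ for the $E_7$ root lattice via the distinguished basis, and cite the classical exact sequence $1\to\{\pm 1\}\to W(E_7)\to O(V,q)\to 1$ (Bourbaki, Ch.\ VI \S 4, Ex.\ 3). You additionally verify details the paper leaves implicit — well-definedness of $q$ on the quotient, the polarization identity, and the transvection formula with $q(\bar e_i)=1$ — all of which check out.
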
 
\begin{proof}
The tetraflections $t_{e_i}$ with $i=1,\dots,7$ act as reflections on the vectorspace $V$ since their squares act as the identity. This defines a representation of the Weyl group $W(E_7)$ on $V$. The matrices of these tetraflections modulo $(1+i)$ are identical to the matrices of the simple generating reflections of $W(E_7)$ modulo $2$. These act naturally on the $\mathbb{F}_2$-vectorspace $V^\prime = Q/2Q$ where $Q$ is the root lattice of type $E_7$.  This space is equipped with the invariant quadratic form defined by $q^\prime(x) \equiv \frac{1}{2}(x,x) \bmod{2}$ where $(\cdot,\cdot)$ is the natural bilinear form on $Q$ defined by the Gram matrix of type $E_7$. We conclude that the representation spaces $(V,q)$ and $(V^\prime,q^\prime)$ for $W(E_7)$ are isomorphic. The proposition now follows from Exercise 3 in \S 4 of Ch VI of \cite{Bourbaki} where it is shown that
\[ 1 \rightarrow \{ \pm 1 \} \rightarrow W(E_7) \rightarrow O(V,q) \rightarrow 1\]
is an exact sequence.
\end{proof}

Let $U(\Lambda_{1,6})^a$ be the set of antiunitary transformations of $\Lambda_{1,6}$. Reduction modulo $(1+i)$ also induces a map 
\[ U(\Lambda_{1,6})^a \rightarrow O(V,q)\cong W(E_7)^+\]
since complex conjugation induces the identity map on $V$. The projective class of an antiunitary involution $[\chi]$ maps to an involution $u$ of $W(E_7)^+$ under this map. Its image does not depend on the choice of representative for the class $[\chi]$ since multiplication by $i$ acts as the identity on $V$. This implies that the conjugation class of the involution $u$ in $W(E_7)^+$ is an invariant of the $P\Gamma$-conjugation class of $[\chi]$. The conjugation classes of involutions of $W(E_7)$ are determined by Wall\cite{Wall}. This can also be derived from more general results by Richardson \cite{Richardson}. We review these results in the Appendix of this article. There are ten conjugation classes that come in five pairs $\{u,-u\}$. Since both $u$ and $-u$ map to the same involution $\overline{u} \in W(E_7)^+$ each pair determines a unique conjugation class in $W(E_7)^+$. We will use this to prove the following theorem.

\begin{thm}\label{sixinvolutions}If we define (in the notation of Table \ref{fixedblockstable} and Lemma \ref{Gausiso}) antiunitary involutions $\chi_i$ of $\Lambda_2^3\oplus (2)$ for $i=1,\ldots,6$ by
\begin{align*}
\chi_1 &=\psi_2^3 \oplus \psi_1 &
\chi_2 &=\psi_2^2 \oplus \psi_2^\prime \oplus \psi_1 &
\chi_3 &=\psi_2 \oplus (\psi_2^\prime)^2 \oplus \psi_1 \\
\chi_4 &= (\psi_2^\prime)^3 \oplus \psi_1 &
\chi_5 &= \psi_4 \oplus \psi_2 \oplus \psi_1 &
\chi_6 &= \psi_4 \oplus \psi_3
\end{align*}
then their projective classes are distinct modulo conjugation by $P\Gamma$.
\end{thm}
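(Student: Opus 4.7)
My plan is to separate the six projective classes by two $P\Gamma$-invariants: the isomorphism class of the unordered pair $(\Lambda_{1,6}^{\chi},\Lambda_{1,6}^{i\chi})$ of fixed lattices (Lemma \ref{ClassCriterion}), and, where that invariant collapses, the conjugacy class in $W(E_7)^+$ of the involution induced on $V=\Lambda_{1,6}/(1+i)\Lambda_{1,6}$ via Proposition \ref{reduction}. The first step is a summand-by-summand computation of $\Lambda^{\chi_j}$ and $\Lambda^{i\chi_j}$. For $\psi_1,\psi_2,\psi_2',\psi_4$ I would read the fixed lattices off Table \ref{fixedblockstable}; for $\psi_3$ (appearing in $\chi_6$) Lemma \ref{Gausiso} reduces the problem to $\psi_1\oplus\psi_2$ on $(-2)\oplus\Lambda_{1,1}$; Lemma \ref{chiandichi} gives $\Lambda^{i\psi}\cong\Lambda^{\psi}$ for $\psi\in\{\psi_2,\psi_2',\psi_4\}$; and a direct check on the $(2)$-summand of $\Lambda_{1,6}$ yields $\Lambda^{\psi_1}=(2)$ and $\Lambda^{i\psi_1}=(4)$.

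After applying Lemma \ref{simplifylattices}, the pair for $\chi_j$ with $j=1,\ldots,4$ takes the form
\[
\bigl\{\,(2)\oplus A_1^{\,7-j}\oplus A_1(2)^{\,j-1},\ (2)\oplus A_1^{\,6-j}\oplus A_1(2)^{\,j}\,\bigr\},
\]
and the total $A_1(2)$-multiplicity in the pair is $2j-1$, which is a lattice invariant (the discriminant $2^{1+a+2b}$ of $(2)\oplus A_1^a\oplus A_1(2)^b$ pins down $b$ once the signature $(1,a+b)$ is fixed). This separates the four cases $j=1,\ldots,4$. For $\chi_5$ the analogous computation—using $(4)\oplus A_1\cong(2)\oplus A_1(2)$ together with the third identity of Lemma \ref{simplifylattices}—yields the pair $\{(2)\oplus A_1^2\oplus D_4(2),\,(2)\oplus A_1^3\oplus A_1(2)^3\}$, and for $\chi_6$ the same pair appears after using the second identity $U(2)\oplus A_1\cong(2)\oplus A_1^2$ to absorb the $U(2)$-summand coming from the $\Lambda_{1,1}$-piece of $\psi_3$. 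The lattice $(2)\oplus A_1^2\oplus D_4(2)$ is distinguished from its equal-discriminant competitor $(2)\oplus A_1^4\oplus A_1(2)^2$ (the first member of $\chi_3$'s pair) by the discriminant form on the order-$4$ part: a generator of a $\mathbb{Z}/4$-summand inside $D_4(2)^\vee/D_4(2)$ carries the value $3/2\in\mathbb{Q}/2\mathbb{Z}$, whereas a generator of $A_1(2)^\vee/A_1(2)$ carries $7/4$. Hence $\chi_1,\ldots,\chi_4$ are pairwise distinct and each distinct from $\chi_5,\chi_6$.

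To separate $\chi_5$ from $\chi_6$, whose fixed-lattice pairs now coincide, I would invoke Proposition \ref{reduction}. Writing $\chi_j=M_j\circ\conj$, complex conjugation acts trivially on $V$, so $\chi_j$ descends to the involution $u_j\in O(V,q)\cong W(E_7)^+$ represented by $M_j\bmod(1+i)$; the $W(E_7)^+$-conjugacy class of $u_j$ is a $P\Gamma$-invariant of $[\chi_j]$ because units of $\mathcal{G}$ reduce to $1$ modulo $(1+i)$. I would fix an explicit $\mathcal{G}$-change of basis between $\Lambda_2^3\oplus(2)$ and the $E_7$-basis of Proposition \ref{reduction}, reduce the matrices of $\psi_4,\psi_3,\psi_2,\psi_1$ to obtain $u_5,u_6\in\mathrm{GL}_7(\mathbb{F}_2)$, and compute conjugacy-class invariants such as the rank of $u_j-1$ and the type of $q$ restricted to the fixed subspace, matching the outcome against the involution list in $W(E_7)^+$ recorded in the Appendix along the lines of Wall \cite{Wall}. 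These invariants will differ for $u_5$ and $u_6$.

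The main obstacle is this last step. Unlike the analogous result for $\Lambda_{1,2}$, here the coincidence $U(2)\oplus A_1\cong(2)\oplus A_1^2$ erases the distinction between $\chi_5$ and $\chi_6$ at the level of fixed lattices, forcing the mod-$(1+i)$ reduction to be carried out in a basis adapted to the $E_7$-description of $\Lambda_{1,6}$ and then matched against Wall's tables to finish the argument.
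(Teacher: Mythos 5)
Your proposal breaks down at exactly the step you flag as the crux. The reduction modulo $(1+i)$ cannot separate $\chi_5$ from $\chi_6$: the matrices $M_2\oplus M_1$ and $M_3$ both reduce to the identity modulo $(1+i)$ (every entry of $M_3$ is either even, equal to $\pm i$ or $-1$ up to odd integers, or divisible by $1+i$, e.g.\ $1+3i=(1+i)(2+i)$), so $\overline{u}_5$ and $\overline{u}_6$ are not merely conjugate but \emph{equal} elements of $W(E_7)^+$, both of type $(D_4,A_1^{3\prime})$ as recorded in Table \ref{dimfix}. No conjugacy invariant in $W(E_7)^+$ --- rank of $u-1$, type of $q$ on the fixed space, or anything else --- can distinguish them, so your final step fails irreparably rather than being merely laborious.

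You were driven to this dead end by a computational error in the other invariant: the fixed-lattice pairs of $\chi_5$ and $\chi_6$ do \emph{not} coincide. Via Lemma \ref{Gausiso} and Lemma \ref{chiandichi} one has $i\chi_6\sim\psi_4\oplus i\psi_1\oplus\psi_2$ on $\Lambda_2^2\oplus(-2)\oplus\Lambda_{1,1}$, whose fixed lattice is $U(2)\oplus A_1(2)\oplus D_4(2)$; the summand accompanying $U(2)$ is $A_1(2)$, not $A_1$, so the identity $U(2)\oplus A_1\cong(2)\oplus A_1^2$ (which you correctly used for $\Lambda^{\chi_6}$ itself) does not apply, and indeed $U(2)\oplus A_1(2)\oplus D_4(2)\not\cong(2)\oplus A_1^3\oplus A_1(2)^3=\Lambda^{i\chi_5}$: after scaling by $\tfrac12$ the former is even ($U\oplus A_1\oplus D_4$) while the latter is odd. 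So Lemma \ref{ClassCriterion}, the very invariant you abandoned, is what separates $\chi_5$ from $\chi_6$ --- this is precisely the paper's argument, which conversely uses the $W(E_7)^+$-types for $\chi_1,\ldots,\chi_4$ and the lattice pairs only for the last two. Your treatment of $\chi_1,\ldots,\chi_4$ by discriminants of the pairs is a sound alternative to the paper's route (and your comparison of $(2)\oplus A_1^2\oplus D_4(2)$ with $(2)\oplus A_1^4\oplus A_1(2)^2$ is essentially right, though it should be phrased as ``$q$ takes the value $7/4$ on the discriminant group of the latter but only values in $\tfrac12\mathbb{Z}/2\mathbb{Z}$ on that of the former,'' since order-$4$ elements of value $3/2$ exist in both and $\mathbb{Z}/4$-summands are not canonical). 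But as written the proof of the theorem is incomplete at the $\chi_5$ versus $\chi_6$ dichotomy, and repairing it means recomputing $\Lambda^{i\chi_6}$ and reverting to Lemma \ref{ClassCriterion} there.
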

\begin{proof}According to Lemma \ref{fixedblocks} and the previous example it is clear that the $\chi_i$ are antiunitary involutions of the lattice $\Lambda_2^3\oplus (2)$. By reducing the $\chi_i$ modulo $(1+i)$ they map to involutions $\overline{u}_i$ in $W(E_7)^+$. To distinguish them we calculate the dimensions of the fixed point spaces in $V$ and compare them to those of the involutions in $W(E_7)^+$. From this we conclude that $\overline{u}_1,\overline{u}_2$ and $\overline{u}_4$ are of type $(1,E_7),(A_1,D_6)$ and $(A_1^3,A_1^4)$ respectively. It is clear that $\overline{u}_5=\overline{u}_6$. We used the computer algebra package SAGE to determine that both are of type $(D_4,A_1^{3\prime})$ and that $\overline{u}_3$ is of type $(A_1^2,D_4A_1)$. All of this is summarized in Table \ref{dimfix}.

\begin{table}
\centering
\[
\begin{array}{lll}
\toprule
\chi_i & \text{Type of } \overline{u}_i & \dim V^{\overline{u}_i} \\
\midrule
\chi_1 & (1,E_7) & 7 \\
\chi_2 & (A_1,D_6) & 6 \\
\chi_3 & (A_1^2,D_4A_1) &  5 \\
\chi_4 & (A_1^3,A_1^4) & 4 \\
\chi_5 & (D_4,A_1^{3\prime}) & 5 \\
\chi_6 & (D_4,A_1^{3\prime}) & 5 \\
\bottomrule
\end{array}
\]
\caption{The six projective classes of antiunitary involutions of $\Lambda_{1,6}$ and the type of the involution they induce in $W(E_7)^+$ by reducing modulo $1+i$.}
\label{dimfix}
\end{table}

This method is insufficient to distinghuish the classes of $\chi_5$ and $\chi_6$. For this we determine the fixed point lattice $\Lambda_{1,6}^{\chi_i}$ and $\Lambda_{1,6}^{i\chi_i}$ for $i=5,6$ and use Lemma \ref{ClassCriterion}. The lattices $\Lambda_{1,6}^{\chi_5}$ and $\Lambda_{1,6}^{\chi_6}$ are both isomorphic to $(2) \oplus A_1^2 \oplus D_4(2)$. The lattice $\Lambda_{1,6}^{i\chi_5}$ is isomorphic to 
\[(2) \oplus A_1 \oplus A_1(2) \oplus D_4(2) \cong (2) \oplus A_1^3 \oplus A_1(2)^3 \] 
where we used Lemma \ref{simplifylattices}. The fixed point lattice $\Lambda_{1,6}^{i\chi_6}$ is isomorphic to $U(2) \oplus A_1(2) \oplus D_4(2)$. After scaling by a factor $\frac{1}{2}$ we see that $\Lambda_{1,6}^{i\chi_5}$ is odd while the $\Lambda_{1,6}^{i\chi_6}$ is even so that they are not isomorphic. Consequently the $P\Gamma$-conjugacy classes of the $[\chi_5]$ and $[\chi_6]$ are distinct.
\end{proof}
\begin{table}
\centering
\[ \begin{array}{lllll}
\toprule
\chi_i & \Lambda_{1,6}^{\chi_i} & d(\Lambda_{1,6}^{\chi_i}) & \Lambda_{1,6}^{i\chi_i} & d(\Lambda_{1,6}^{i\chi_i}) \\
\midrule
\chi_1 & (2) \oplus A_1^6  & 2^7 & (2) \oplus A_1^5 \oplus A_1(2) & 2^8 \\
\chi_2 & (2) \oplus A_1^5 \oplus A_1(2)  & 2^8 & (2) \oplus  A_1^4 \oplus A_1(2)^2 & 2^9 \\
\chi_3 & (2) \oplus  A_1^4 \oplus A_1(2)^2  & 2^9 & (2) \oplus A_1^3 \oplus A_1(2)^3  & 2^{10} \\
\chi_4 & (2) \oplus  A_1^3 \oplus A_1(2)^3  & 2^{10} & (2) \oplus  A_1^2 \oplus A_1(2)^4   & 2^{11} \\
\chi_5 & (2) \oplus A_1^2  \oplus D_4(2)  & 2^9 &(2) \oplus A_1^3 \oplus A_1(2)^3  & 2^{10} \\
\chi_6 & (2) \oplus A_1^2  \oplus D_4(2)   & 2^9 & U(2) \oplus A_1(2) \oplus D_4(2) & 2^{10}\\
\bottomrule
\end{array} \]
\caption{The fixed point lattices for $\chi_j$ and $i\chi_j$ for $j=1,\ldots,6$ and their discriminants.}
\label{12lattices}
\end{table}

\begin{rmk}The question remains whether the list of antiunitary involutions from Theorem \ref{sixinvolutions} is complete. This is in fact the case as we will see in Proposition \ref{allsixinvolutions}. It is a consequence of the fact that the moduli space of smooth real quartics consists of six connected components. 
\end{rmk}

\begin{thm}The hyperbolic lattices $\Lambda_{1,6}^\chi$ for $\chi=\chi_j,i\chi_j$ where $j=1,\ldots,6$ from Table \ref{12lattices} are all reflective and the hyperbolic Coxeter diagrams for the groups $PO(\Lambda_{1,6}^\chi)$ are shown in Figure \ref{E7diagrams}.
\end{thm}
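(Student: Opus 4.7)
The plan is to apply Vinberg's algorithm from Section~\ref{Vinberg} to each of the twelve hyperbolic $2$-elementary lattices tabulated in Table~\ref{12lattices}. For each such $\Lambda=\Lambda_{1,6}^{\chi}$ I choose a controlling vector $p\in\Lambda$ with $(p,p)>0$. Eleven of the twelve lattices contain an explicit $(2)$ summand and a generator of that summand is the natural choice; for the remaining lattice $U(2)\oplus A_1(2)\oplus D_4(2)$ a primitive positive-norm vector in the $U(2)$ block plays the same role. The height-$0$ roots are precisely the roots of the negative definite summands ($A_1$, $A_1(2)$, $D_4(2)$), so a simple system for this finite root system can be written down directly and constitutes the first batch of simple roots of the fundamental chamber $C$.

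Next, I enumerate primitive vectors of norms $-2,-4$ and $-8$ (the only norms for which the reflection formula preserves the lattice, given that each $\Lambda_{1,6}^{\chi}$ is even with $2$-elementary discriminant) in order of increasing height $h(r)=-2(r,p)^2/(r,r)$, accepting a vector $r$ at each step if and only if it has non-negative inner product with the previously accepted roots. Termination is checked against Vinberg's finite-volume criterion: every elliptic rank-$5$ subdiagram must extend in exactly two ways to an elliptic rank-$6$ or parabolic rank-$5$ subdiagram, with at least one elliptic rank-$5$ subdiagram present. Once the criterion is satisfied, the Coxeter diagram of $C$ is read off from the Gram matrix of the simple roots using the conventions of Figure~\ref{conv}, and a direct comparison with Figure~\ref{E7diagrams} verifies the claimed diagrams. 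The reflectivity of each $\Lambda_{1,6}^{\chi}$ then follows: the resulting polytope $C$ has finite volume, so $W(\Lambda_{1,6}^{\chi})$ has finite index in the full arithmetic group $PO(\Lambda_{1,6}^{\chi})$ (which itself has finite covolume by Siegel's theorem, as cited at the start of Section~\ref{Vinberg}).

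Because of the many repetitions in Table~\ref{12lattices} only seven isomorphism classes of lattices actually need to be treated, and the seven lattices organise into two groups. The five lattices of type $(2)\oplus A_1^a\oplus A_1(2)^b$ with $a+b=6$ are a direct generalization of the family analysed by Vinberg in Example~\ref{VinCoxex}: the height-$0$ system is already of rank $6$, one or two further roots of height $1$ (or $2$ in the pure $A_1^6$ case) complete the diagram, and the termination check is short. The genuinely delicate cases are the two lattices involving a $D_4(2)$ summand, namely $(2)\oplus A_1^2\oplus D_4(2)$ and $U(2)\oplus A_1(2)\oplus D_4(2)$. Here the height-$0$ system is smaller, so many more positive-height roots must be generated and the extension criterion must be tracked over several heights before the fundamental polytope closes up. This is the main obstacle, and it is most reliably handled by a computer enumeration (of the sort already invoked via SAGE in the proof of Theorem~\ref{sixinvolutions}), followed by a by-hand verification that every rank-$5$ elliptic subdiagram of the resulting diagram extends in exactly two admissible ways, which then yields the two remaining diagrams of Figure~\ref{E7diagrams}.
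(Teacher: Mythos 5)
Your overall strategy is the same as the paper's: the paper likewise observes that Table~\ref{12lattices} contains only seven distinct lattices, runs Vinberg's algorithm on each with a norm-$2$ controlling vector, and reads off the diagrams of Figure~\ref{E7diagrams}; it works out the delicate case $(2)\oplus A_1^2\oplus D_4(2)$ in detail (Table~\ref{VinbergF}), where the height-$0$ system is $B_2F_4$ and only two further roots, of heights $1$ and $2$, are needed, and states that the remaining six lattices are handled similarly. Finite volume of the resulting polytope gives reflectivity exactly as you argue, so your proposal is essentially a (slightly more automated) rendering of the paper's proof.

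Two of your supporting claims are inaccurate, and one would derail the algorithm if taken literally. First, these lattices are \emph{not} $2$-elementary once an $A_1(2)$ or $D_4(2)$ summand appears: $A_{A_1(2)}\cong \mathbb{Z}/4\mathbb{Z}$, so the discriminant groups have exponent $4$; moreover genuine $2$-elementarity would only permit root norms $-2$ and $-4$. The correct justification of the norm list $\{-2,-4,-8\}$ is that $4L^\vee\subset L$ for all seven lattices, so a primitive $r$ with $r/k\in L^\vee$ forces $k\in\{1,2,4\}$. Second, the height-$0$ roots are not merely the roots of the individual definite summands: they form the root system of the entire definite complement $p^\perp$, which contains mixed roots such as $e_1-e_2$ (norm $-4$, spanning two $A_1$ summands, the root $r_1$ in the paper's Table~\ref{VinbergF}) and $(1,\pm 1)$ in an $A_1(2)^2$ block (norm $-8$). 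This is why the paper's height-$0$ system for $(2)\oplus A_1^2\oplus D_4(2)$ is $B_2F_4$ rather than $A_1^2\times F_4$, matching Vinberg's Example~\ref{VinCoxex} where $(2)\oplus A_1^n$ yields $B_n$ and not $A_1^n$. Seeding the algorithm with a simple system for the smaller union of summand root systems would not produce a fundamental chamber of $W(L)$, and the diagrams would come out wrong. With these two local repairs, your argument is precisely the paper's; your triage of easy versus hard cases and the option of machine enumeration are sensible additions, though the paper completes the hardest case by hand with only eight simple roots.
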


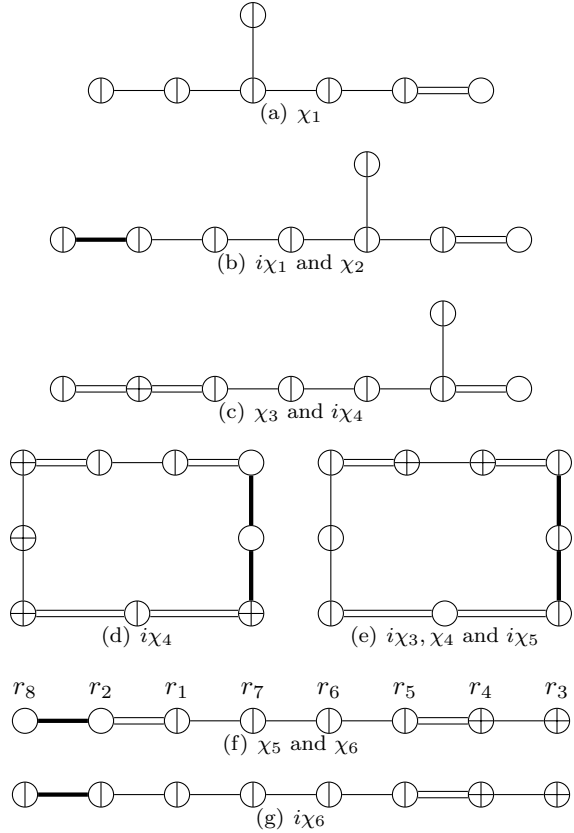
\begin{figure}[h]
\centering
\begin{tabular}{c}
\subfigure[$\chi_1$]{
\begin{tikzpicture}
\node[c,vert] (1) at (0,0) {};
\node[c,vert] (2) at (1,0) {};
\node[c,vert] (3) at (2,0) {};
\node[c,vert] (4) at (3,0) {};
\node[c,vert] (5) at (4,0) {};
\node[c] (6) at (5,0) {};
\node[c,vert] (7) at (2,1) {};
\draw [-] (1) -- (2) -- (3) -- (4) -- (5);
\draw [-,double distance = 2pt] (5) -- (6);
\draw [-] (3) -- (7);
\end{tikzpicture}
}

\\

\subfigure[$i\chi_1$ and $\chi_2$]{
\begin{tikzpicture}
\node[c,vert] (1) at (0,0) {};
\node[c,vert] (2) at (1,0) {};
\node[c,vert] (3) at (2,0) {};
\node[c,vert] (4) at (3,0) {};
\node[c,vert] (5) at (4,0) {};
\node[c,vert] (6) at (5,0) {};
\node[c] (7) at (6,0) {};
\node[c,vert] (8) at (4,1) {};
\draw[-,ultra thick] (1) -- (2);
\draw[-] (2) -- (3) -- (4) -- (5) -- (6) ;
\draw[-,double distance = 2pt] (6) -- (7);
\draw[-] (5) -- (8);
\end{tikzpicture}
}

\\

\subfigure[$\chi_3$ and $i\chi_4$]{
\begin{tikzpicture}
\node[c,vert] (1) at (0,0) {};
\node[c,oplus] (2) at (1,0) {};
\node[c,vert] (3) at (2,0) {};
\node[c,vert] (4) at (3,0) {};
\node[c,vert] (5) at (4,0) {};
\node[c,vert] (6) at (5,0) {};
\node[c] (7) at (6,0) {};
\node[c,vert] (8) at (5,1) {};
\draw[-,double distance = 2pt] (1) -- (2) -- (3);
\draw[-] (3) -- (4) -- (5) -- (6) ;
\draw[-,double distance = 2pt] (6) -- (7);
\draw[-] (6) -- (8);
\end{tikzpicture}
}

\\

\subfigure[$i\chi_4$]{
\begin{tikzpicture}
\node[c,oplus] (1) at (0,0) {};
\node[c,vert] (2) at (1.5,0) {};
\node[c,oplus] (3) at (3,0) {};
\node[c] (4) at (3,1) {};
\node[c] (5) at (3,2) {};
\node[c,vert] (6) at (2,2) {};
\node[c,vert] (7) at (1,2) {};
\node[c,oplus] (8) at (0,2) {};
\node[c,oplus] (9) at (0,1) {};
\draw[-,double distance = 2pt] (1) -- (2) -- (3);
\draw[-,ultra thick] (3) -- (4) -- (5);
\draw[-,double distance = 2pt] (5) -- (6);
\draw[-] (6) -- (7);
\draw[-,double distance = 2pt] (7) -- (8);
\draw[-] (8) -- (9) -- (1);
\end{tikzpicture}
}

\quad
\subfigure[$i\chi_3,\chi_4$ and $i\chi_5$]{
\begin{tikzpicture}
\node[c,vert] (1) at (0,0) {};
\node[c] (2) at (1.5,0) {};
\node[c,vert] (3) at (3,0) {};
\node[c,vert] (4) at (3,1) {};
\node[c,vert] (5) at (3,2) {};
\node[c,oplus] (6) at (2,2) {};
\node[c,oplus] (7) at (1,2) {};
\node[c,vert] (8) at (0,2) {};
\node[c,vert] (9) at (0,1) {};
\draw[-,double distance = 2pt] (1) -- (2) -- (3);
\draw[-,ultra thick] (3) -- (4) -- (5);
\draw[-,double distance = 2pt] (5) -- (6);
\draw[-] (6) -- (7);
\draw[-,double distance = 2pt] (7) -- (8);
\draw[-] (8) -- (9) -- (1);
\end{tikzpicture}
}

\\
\subfigure[$\chi_5$ and $\chi_6$]{
\begin{tikzpicture}
\node[c,label = above:$r_8$] (0) at (-1,0) {};
\node[c,label = above:$r_2$] (1) at (0,0) {};
\node[c,vert,label = above:$r_1$] (2) at (1,0) {};
\node[c,vert,label = above:$r_7$] (3) at (2,0) {};
\node[c,vert,label = above:$r_6$] (4) at (3,0) {};
\node[c,vert,label = above:$r_5$] (5) at (4,0) {};
\node[c,oplus,label = above:$r_4$] (6) at (5,0) {};
\node[c,oplus,label = above:$r_3$] (7) at (6,0) {};
\draw[-,double distance = 2pt] (1) -- (2);
\draw[-] (2) -- (3) -- (4) -- (5) ;
\draw[-,double distance = 2pt] (5) -- (6);
\draw[-] (6) -- (7);
\draw[-,ultra thick] (0) --  (1);
\end{tikzpicture}
}

\\

\subfigure[$i\chi_6$]{
\begin{tikzpicture}
\node[c,vert] (0) at (-1,0) {};
\node[c,vert] (1) at (0,0) {};
\node[c,vert] (2) at (1,0) {};
\node[c,vert] (3) at (2,0) {};
\node[c,vert] (4) at (3,0) {};
\node[c,vert] (5) at (4,0) {};
\node[c,oplus] (6) at (5,0) {};
\node[c,oplus] (7) at (6,0) {};
\draw[-] (1) -- (2);
\draw[-] (2) -- (3) -- (4) -- (5) ;
\draw[-,double distance = 2pt] (5) -- (6);
\draw[-] (6) -- (7);
\draw[-,ultra thick] (0) -- (1);
\end{tikzpicture}
}
\end{tabular}
\caption{The Coxeter diagram of the groups $PO(\Lambda_{1,6}^{\chi})$ for $\chi=\chi_j,i\chi_j$ with $j=1,\ldots,6$.}
\label{E7diagrams}
\end{figure}  

\begin{proof}
We observe from Table \ref{12lattices} that there are seven distinct hyperbolic lattices. To prove that they are reflective we apply Vinberg's algorithm. We demonstrate this for the hyperbolic lattice $(2)\oplus A_1^2\oplus D_4(2)$ corresponding to the antiunitary involutions $\chi_5$ and $\chi_6$. Let $\{e_0,e_1,e_2\}$ be an orthonormal basis for $(2)\oplus A_1^2$. Recall that the root lattice $D_4(2)$ is given by
\[ D_4(2) = \{ (u_1,u_2,u_3,u_4)\in \mathbb{Z}^4 \ ; \ \sum_{i=0}^4 u_i \equiv 0 \pmod {-2}\}. \]
It contains roots of norm $-4$ and $-8$ and both form a root system of type $D_4$. Together these roots form a root system of type $F_4$. If we choose the controlling vector $e_0$ the height $0$ root system is of type $B_2F_4$ spanned by the roots $\{r_1,\ldots,r_6\}$ from Table \ref{VinbergF}. This table also shows how the algorithm proceeds. The resulting Coxeter diagram is shown in Figure \ref{E7diagrams}. The Coxeter diagrams for the other six hyperbolic lattices can be computed similarly and are also shown in this figure.\end{proof}


\begin{table}
\centering

$\begin{array}{cccccccc}
\toprule
 & e_0 & e_1 & e_2 & u_1 & u_2 & u_3 & u_4 \\
 \midrule
 p & 1 & 0 & 0 & 0& 0&0 &0\\
 \text{height 0} &&&&&&&\\
 r_1 & 0&1&-1&0&0&0&0\\ 
 r_2 & 0&0&1&0&0&0&0\\
 r_3 & 0&0&0&1&-1&-1&-1\\
 r_4 & 0&0&0&0&0&0&2\\
 r_5 & 0&0&0&0&0&1&-1\\
 r_6 & 0&0&0&0&1&-1&0\\
 \text{height} 1 &&&&&&&\\
 r_7 & 1&-1&0&-1&-1&0&0\\
 \text{height} 2 &&&&&&&\\
 r_8 & 1&-1&-1&0&0&0&0\\
\bottomrule
\end{array}$
\caption{Vinberg's algorithm for the hyperbolic lattice $(2)\oplus A_1^2\oplus D_4(2)$. This lattice corresponds to the two antiunitary involutions $\chi_5$ and $\chi_6$.}
\label{VinbergF}
\end{table}

\section{Real plane quartic curves}

\subsection{Kondo's period map} 
In this section we review Kondo's construction of a period map for complex plane quartic curves \cite{Kondo1}. Let $C$ be a smooth quartic curve in $\mathbb{P}^2$ defined by a homogeneous polynomial $f(x,y,z)$ of degree four. We briefly recall some terminology from Mumford's geometric invariant theory of quartic curves \cite{Mumford}. A complex quartic curve is called stable if it has at worst ordinary nodes and cusps as singularities and semistable if it has at worst tacnodes as singularities or is a smooth conic of multiplicity two. 

We define the surface $X$ to be the fourfold cyclic cover of $\mathbb{P}^2$ ramified along $C$ so that
\[X = \{ w^4 = f(x,y,z) \} \subset \mathbb{P}^3.\]
The surface $X$ is a $K3$-surface of degree four with an action of the group of covering transformations of the cover $\pi:X\rightarrow \mathbb{P}^2$. This group is cyclic of order four and a generator is given by the transformation
\[ \rho_X \cdot [w:x:y:z]=[i w : x: y: z].\]
The involution $\tau_X=\rho_X^2$ also acts on $X$ and the quotient surface $Y=X/\tau_X$ is a double cover of $\mathbb{P}^2$ ramified over the quartic $C$. It is a del Pezzo surface of degree two. The situation is summarized by the following commutative diagram.
\begin{center}
\begin{tikzcd}[row sep = small]
{} & X \arrow{dl}{\pi_1} \arrow{dd}{\pi} \\
Y \arrow{dr}{\pi_2} & {} \\
{} & \mathbb{P}^2 
\end{tikzcd}
\end{center}

The cohomology group $H^2(X,\mathbb{Z})$ is even, unimodular of signature $(3,19)$ and so is isomorphic to the $K3$ lattice $L=E_8^2\oplus U^3$. A choice of isomorphism $\phi: H^2(X,\mathbb{Z}) \rightarrow L$ is called a marking of $X$. We fix a marking and let $\rho$ and $\tau$ denote the automorphisms of $L$ induced by $\rho_X$ and $\tau_X$. Kondo \cite{Kondo1} proves that the eigenlattices of $\tau$ for the eigenvalues $+1$ and $-1$ are isomorphic to
\begin{equation}\label{L+andL-}
L_+ \cong  A_1^7 \oplus (2) \quad , \quad L_- \cong D_4^3 \oplus (2)^2 .
\end{equation}

\begin{rmk}
The expression for $L_-$ in Equation \ref{L+andL-} is different from the lattice $U(2)^2 \oplus D_8 \oplus A_1^2$ given by Kondo. Since the lattice $L_-$ is even and $2$-elementary its isomorphism type is determined by the invariants $(r_+,r_-,a,\delta)$ from Theorem \ref{2invariants}. These invariants are $(2,12,8,1)$ for both lattices so that the lattices are isomorphic. For the lattice $U\oplus U(2) \oplus D_4^2 \oplus A_1^2$ the invariants also take these values so that it is isomorphic to the previous two lattices. \end{rmk}

For applications later on it is convenient to have a more explicit desciption of the involution $\tau$. This is provided by the following lemma. 
\begin{lem}\label{explicitembedding}
Let $L=U^3\oplus E_8^2$ be the $K3$ lattice. The involution $\tau$ is conjugate in $O(L)$ to the involution given by
\begin{equation}\label{tau}
-I_2 \oplus \begin{pmatrix}0&I_2\\I_2&0\end{pmatrix} \oplus u \oplus u 
\end{equation} 
where $u\in O(E_8)$ is an involution of type $D_4A_1$.
\end{lem}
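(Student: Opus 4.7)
The plan is to compute the $\pm 1$ eigenlattices of the explicit involution $\tau'$ on the right-hand side of Equation \ref{tau}, identify them with $L_+$ and $L_-$ via Nikulin's classification of even indefinite $2$-elementary lattices (Proposition \ref{2invariants}), and then deduce $O(L)$-conjugacy of $\tau$ and $\tau'$ from Nikulin's gluing theorem (Proposition \ref{gluing}) combined with Theorem \ref{Nik363}.

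First I would analyze $\tau'$ block by block on $L = U \oplus U^{\oplus 2} \oplus E_8^{\oplus 2}$. The $-I_2$ on the first copy of $U$ has fixed lattice $0$ and anti-fixed lattice $U$. The swap on $U^{\oplus 2}$ has diagonal and antidiagonal eigenlattices both isomorphic to $U(2)$, since each embedding picks up a factor $2$ in the restricted bilinear form. Choosing $u \in O(E_8)$ to be the product of reflections in a $D_4 \oplus A_1$ root subsystem of $E_8$, the anti-fixed lattice $E_8^{-u}$ is the primitive hull of this root subsystem and $E_8^u$ is its negative definite orthogonal complement in $E_8$, of rank $3$. This yields
\[
L_+' = U(2) \oplus (E_8^u)^{\oplus 2}, \qquad L_-' = U \oplus U(2) \oplus (E_8^{-u})^{\oplus 2},
\]
with signatures $(1,7)$ and $(2,12)$ matching those of $L_+$ and $L_-$.

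Next I would verify that $L_+'$ and $L_-'$ are $2$-elementary and compute their invariants $(r_+,r_-,a,\delta)$. Because $E_8^u$ and $E_8^{-u}$ are each other's orthogonal complements inside the unimodular lattice $E_8$, their discriminant groups are isomorphic and one finds them of length $3$ in our situation. Together with the $(\mathbb{Z}/2)^{\oplus 2}$ contribution from $U(2)$, this yields $a=8$ for both $L_\pm'$. The invariant $\delta=1$ is witnessed by half-integral self-pairings arising from the $A_1$-summand of $E_8^{-u}$ and from $E_8^u \cong A_1^{\oplus 3}$ respectively. Since $A_1^7 \oplus (2)$ and $D_4^3 \oplus (2)^2$ have invariants $(1,7,8,1)$ and $(2,12,8,1)$, Proposition \ref{2invariants} implies $L_\pm' \cong L_\pm$.

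Finally, because $L$ is even unimodular, Proposition \ref{gluing} shows that $\tau$ is determined up to $O(L)$-conjugacy by the pair $(L_+,L_-)$ together with a gluing isomorphism $\gamma : A_{L_+} \to A_{L_-}$ of discriminant forms. Two gluings that differ by an element of $O(L_+)$ acting on $A_{L_+}$ produce conjugate involutions, and Theorem \ref{Nik363} asserts that $O(L_+) \to O(A_{L_+})$ is surjective since $L_+$ is indefinite and $2$-elementary. Hence only the isomorphism class of $L_+$ matters, and the conjugacy of $\tau$ and $\tau'$ follows. The main obstacle in this plan is the verification of the $\delta$-invariant for $L_\pm'$: this requires either a fully explicit choice of the $D_4 \oplus A_1$ subsystem inside $E_8$ together with a direct computation of the discriminant form of $E_8^{\pm u}$, or an abstract orthogonal-complement argument showing $E_8^u \cong A_1^{\oplus 3}$ and $E_8^{-u} \cong D_4 \oplus A_1$.
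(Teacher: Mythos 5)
Your proposal is correct, and its first half coincides with the paper's proof: the paper computes the eigenlattices of the involution in Equation \ref{tau} as $U(2)\oplus A_1^6$ and $U\oplus U(2)\oplus D_4^2\oplus A_1^2$ and identifies them with $L_+\cong (2)\oplus A_1^7$ and $L_-\cong D_4^3\oplus (2)^2$ exactly as you do, via the invariants $(r_+,r_-,a,\delta)$ of Proposition \ref{2invariants}. Where you genuinely diverge is the final conjugacy step. The paper simply cites Theorem \ref{primembed}: the lattice $(2)\oplus A_1^7$ has a unique primitive embedding into $L$ up to $O(L)$ (the hypotheses $1<3$, $7<19$ and $l(A_{L_+})=8\leq 22-8-2=12$ hold), and since an involution of $L$ is determined by its fixed sublattice --- it acts as $+1$ there and as $-1$ on the orthogonal complement, hence is determined on a finite-index sublattice and so on all of $L$ --- conjugating the fixed lattices conjugates the involutions. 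You instead re-derive this uniqueness by hand from Proposition \ref{gluing} and Theorem \ref{Nik363}, adjusting the gluing anti-isometry by an element of $O(A_{L_+})$ realized in $O(L_+)$. This is valid and is essentially how Nikulin proves the uniqueness theorem in the $2$-elementary situation, so your route is more self-contained while the paper's is shorter. Two small remarks: Proposition \ref{gluing} as stated only treats automorphisms of a fixed decomposition $M\oplus N\subset L$, whereas you need its standard variant (same proof) for an isometry between two decompositions --- $L/(M\oplus N)$ is the graph of $\gamma$, and a pair $(\phi_M,\phi_N)$ intertwining the two gluings extends to an isometry of $L$ carrying one fixed lattice to the other; and your route requires matching both $L_+$ and $L_-$, whereas the paper's only needs the $L_+$ side.

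The ``main obstacle'' you flag at the end is dispatched in the paper in one line and is not a real difficulty: since $u$ is of type $D_4A_1$, the involution $-u$ is of type $A_1^3$, so $E_8^{u}\cong A_1^3$ immediately. For the other eigenlattice, $D_4\oplus A_1$ is already primitive in $E_8$, because its discriminant form $q_{D_4}\oplus q_{A_1}$ takes the values $-1$, $-\tfrac{1}{2}$, $-\tfrac{3}{2}$ modulo $2\mathbb{Z}$ on nonzero elements and thus has no nonzero isotropic elements, so there is no proper even overlattice; hence $E_8^{-u}\cong D_4\oplus A_1$, and $\delta=1$ follows from the $A_1$ summands exactly as you indicate.
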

\begin{proof}
Since the involution $u$ is of type $D_4A_1$, its negative $-u$ is of type $A_1^3$. This implies that the eigenlattice for the eigenvalue $1$ of $u$ in $E_8$ is isomorphic to $A_1^3$. The $\pm 1$ eigenlattices in $L$ of the involution in Equation \ref{tau} are then given by
\begin{equation}
\begin{aligned}
U(2) \oplus A_1^6 &\cong (2) \oplus A_1^7 \\
U \oplus U(2) \oplus D_4^2 \oplus A_1^2 &\cong D_4^3 \oplus (2)^2 
\end{aligned}
\end{equation}
These eigenlattices are isomorphic to those of $\tau$ in Equation \ref{L+andL-}. The lattice $(2)\oplus A_1^7$ has a unique embedding into the $K3$ lattice $L$ up to automorphisms in $O(L)$ by Theorem \ref{primembed}. This implies that the involution of Equation \ref{tau} is conjugate to $\tau$ in $O(L)$. 
\end{proof}

The map $\pi_1$ induces a primitive embedding of lattices $\pi_1^\ast:\Pic Y \rightarrow \Pic X$ and the image is precisely the lattice $\phi^{-1}(L_+)$. It is the Picard group of the del Pezzo surface $Y$ scaled by a factor two which comes from the fact that the map $\pi_1$ is of degree two. 

The powers $\rho,\rho^2$ and $\rho^3$ act on the lattice $L_-$ without fixed points. This action turns $L_-$ into a Gaussian lattice of signature $(1,6)$ isomorphic to the Gaussian lattice $\Lambda_{1,6}=\Lambda_2^3\oplus (2)$. From now on we identify $L_-$ considered as a Gaussian lattice with $\Lambda_{1,6}$ and write $L_-$ for the underlying $\mathbb{Z}$-lattice. If $\gamma \in \pi_1^\ast \Pic(Y)$ then $(\omega,\gamma)=0$ for all $\omega \in H^{2,0}(X,\mathbb{C})$ so that the complex ball:
\[ \mathbb{B} = \mathbb{P}\{ x \in \Lambda_{1,6} \otimes_\mathcal{G} \mathbb{C} \ ; \ h(x,x)>0 \} \]
is a period domain for smooth plane quartic curves. Let $\Gamma=U(\Lambda_{1,6})$ be the unitary group of the Gaussian lattice $\Lambda_{1,6}$. Equivalently it is the group of orthogonal transformations of the lattice $L_-$ that commute with $\rho$. The period map $\Per: \mathcal{Q} \rightarrow P\Gamma \backslash \mathbb{B}$ is injective by the Torelli theorem for $K3$ surfaces but not surjective. Its image misses certain divisors in $\mathbb{B}$ which we now describe. An element $r \in \Lambda_{1,6}$ is called a root if $h(r,r)=-2$ and for every root we define the mirror $H_r = \{ z\in \mathbb{B} \ ; \ h(r,z)=0 \}$. We denote by $\mathcal{H}\subset \mathbb{B}$ the union of all the root mirrors $H_r$ and write $\mathbb{B}^\circ = \mathbb{B} \setminus \mathcal{H}$.

\begin{thm}[Kondo]\label{Kondomain}
The period map defines an isomorphism of holomorphic orbifolds
\[ \Per: \mathcal{Q} \rightarrow P \Gamma \backslash \mathbb{B}^\circ.\]
\end{thm}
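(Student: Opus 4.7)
The plan is to follow the standard strategy for Kondo-style period maps: combine the global Torelli theorem and the surjectivity of the period map for $K3$ surfaces with the extra structure coming from the order-four automorphism $\rho_X$, which turns everything into a statement about the Gaussian lattice $\Lambda_{1,6}$.

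First I would check that $\Per$ is well-defined and lands in $\mathbb{B}^\circ$. For a smooth quartic $C$, the fourfold cyclic cover $X$ is a smooth $K3$, and a local computation with the defining equation $w^4=f(x,y,z)$ shows that $\rho_X$ acts on $H^{2,0}(X)=\mathbb{C}\omega$ as multiplication by $i$. Choosing a $\rho$-equivariant marking $\phi\colon H^2(X,\mathbb{Z})\to L$ (which exists because $\tau$ has a unique embedding by Lemma \ref{explicitembedding}), the line $\phi(H^{2,0})$ lies in the $(+i)$-eigenspace of $\rho$ on $L_-\otimes\mathbb{C}$, and this eigenspace is canonically identified with $\Lambda_{1,6}\otimes_\mathcal{G}\mathbb{C}$ via the Gaussian structure. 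Two markings differ by $\Gamma=U(\Lambda_{1,6})$, so the class in $P\Gamma\backslash\mathbb{B}$ is canonical. To see the period avoids $\mathcal{H}$, observe that a root $r\in\Lambda_{1,6}$ with $h(r,\omega)=0$ would produce, via its orbit under $\rho$, algebraic $(-2)$-classes in $L_-\subset H^2(X,\mathbb{Z})$ forcing $X$, and hence $C$, to acquire singularities — contradicting smoothness.

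Next I would prove injectivity. Suppose $C_1,C_2$ are two smooth quartics with $\Per(C_1)=\Per(C_2)$ in $P\Gamma\backslash\mathbb{B}^\circ$. Lifting to $\mathbb{B}^\circ$, one obtains a Hodge isometry $\Phi\colon H^2(X_1,\mathbb{Z})\to H^2(X_2,\mathbb{Z})$ commuting with $\rho$. Because the period misses all root mirrors, no $(-2)$-class is orthogonal to $\omega$, so up to composition with a $\rho$-equivariant element of the Weyl group $W(X_i)$ we may assume $\Phi$ sends the Kähler cone of $X_1$ to that of $X_2$. The strong Torelli theorem for $K3$ then promotes $\Phi$ to a holomorphic isomorphism $X_1\to X_2$, and this isomorphism is automatically $\rho$-equivariant because it intertwines the $\rho$-actions on cohomology and $\Aut(X_i)\to O(H^2(X_i,\mathbb{Z}))$ is injective. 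Descending through $\rho_X^2$ and then through $\rho_X$ yields an isomorphism $(\mathbb{P}^2,C_1)\to(\mathbb{P}^2,C_2)$, i.e.\ an element of $\PGL_3(\mathbb{C})$ carrying $C_1$ to $C_2$.

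The main obstacle is surjectivity onto $\mathbb{B}^\circ$. Given $[\omega]\in\mathbb{B}^\circ$, surjectivity of the $K3$ period map produces a marked $K3$ surface $X$ with period $\phi^{-1}[\omega]$. Since $\omega$ lies in the $(+i)$-eigenspace of $\rho$, the Hodge isometry $\rho$ preserves the period line; using again the absence of roots orthogonal to $\omega$ one can arrange, after a Weyl-group twist, that $\rho$ maps the Kähler cone to itself, and the Torelli theorem then realises $\rho$ as an actual order-four holomorphic automorphism $\rho_X$ of $X$. Setting $Y=X/\rho_X^2$, the description of $L_+$ as $A_1^7\oplus(2)$ identifies $\pi_1^{\ast}\Pic(Y)$ with a lattice on which the anticanonical class is ample of square $2$, so $Y$ is a del Pezzo surface of degree two; its anticanonical morphism realises $Y$ as a double cover $\pi_2\colon Y\to\mathbb{P}^2$ branched along a quartic $C$. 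The condition $[\omega]\in\mathbb{B}^\circ$ forces $C$ to be smooth, since any node, cusp, or higher singularity would contribute $(-2)$-curves on the resolution pulling back to $\rho$-equivariant $(-2)$-configurations in $L_-$ yielding a root orthogonal to $\omega$. Finally, the orbifold structure is matched by identifying $\Aut(C)$ with the group of $\rho$-equivariant automorphisms of $X$, which via Torelli is exactly the stabiliser of $[\omega]$ in $P\Gamma$; this upgrades the bijection of coarse moduli spaces to an isomorphism of real-analytic (in fact holomorphic) orbifolds. The chief technical difficulty throughout is controlling the Weyl-group twists so that both $\rho_X$ is holomorphic and $Y$ carries an ample anticanonical class, and this is precisely what the hypothesis $[\omega]\in\mathbb{B}^\circ$ delivers.
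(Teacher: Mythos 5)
Your overall architecture---$\rho$-equivariant markings, strong Torelli for injectivity, surjectivity of the $K3$ period map plus realization of $\rho$ as an automorphism, then the del Pezzo quotient and double cover---is the same route as Kondo's argument, of which the paper only gives a sketch by constructing the inverse map. But one step, as written, is false: you assert that because the period misses all root mirrors, ``no $(-2)$-class is orthogonal to $\omega$.'' In fact $\pi_1^\ast\Pic(Y)=\phi^{-1}(L_+)$ with $L_+\cong (2)\oplus A_1^7$ lies in $\Pic(X)=\omega^\perp\cap H^2(X,\mathbb{Z})$ for \emph{every} smooth quartic, and $L_+$ contains many $(-2)$-vectors, all represented by nodal classes on $X$ (e.g.\ pullbacks of the $56$ exceptional curves of $Y$, which have self-intersection $-2$ since $\pi_1$ has degree two). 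The condition $[\omega]\in\mathbb{B}^\circ$ only excludes roots lying in $L_-$, i.e.\ roots of the Gaussian lattice $\Lambda_{1,6}$. Consequently the K\"ahler cone of $X$ is genuinely smaller than the positive cone, the Weyl-group twists are unavoidable, and you must show they can be chosen compatibly with the $\rho$-action \emph{and} with the degree-four polarization class, so that the Torelli isomorphism is projective and descends to an element of $\PGL_3(\mathbb{C})$; your parenthetical acknowledgement of the twists does not repair the false premise. (A smaller instance of the same looseness: Lemma \ref{explicitembedding} gives uniqueness of the conjugacy class of $\tau$, not of the pair $(\rho,\tau)$, so the existence of a $\rho$-equivariant marking requires Kondo's identification of the $\rho$-action on $L_-$, not just the unique embedding of $L_+$.)

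The second genuine gap is your treatment of the discriminant, which conflates the two $\Gamma$-orbits of roots of Equation \ref{mirrortypes}. A root of hyperelliptic type ($r^\perp\cap\Lambda_{1,6}\cong\Lambda_2^2\oplus\Lambda_{1,1}$) orthogonal to the period does \emph{not} force $X$ or $C$ to acquire singularities: generic points of $\mathcal{H}_h$ are periods of smooth $K3$ surfaces corresponding to smooth hyperelliptic genus-three curves, where the quartic degenerates to a double conic and the relevant linear system on $Y$ becomes hyperelliptic. So your forward argument (``a root orthogonal to $\omega$ produces singular objects, contradicting smoothness'') fails on $\mathcal{H}_h$, and symmetrically your surjectivity argument only rules out nodal degenerations: for $[\omega]\in\mathbb{B}^\circ$ you must still prove that $Y=X/\left<\tau_X\right>$ is actually a del Pezzo surface of degree two whose anticanonical map is a double cover of $\mathbb{P}^2$ branched along a quartic---equivalently, that the polarization is not hyperelliptic---and that exclusion is precisely what the absence of algebraic hyperelliptic roots controls. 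Kondo treats the two orbits separately (Lemma 3.3 of \cite{Kondo1}, quoted in Equation \ref{mirrortypes}); as written, your argument would ``show'' that periods of smooth quartics merely avoid nodal mirrors, leaving the map possibly hitting $\mathcal{H}_h$, which contradicts the decomposition of the discriminant used throughout the rest of the paper.
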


\begin{proof}
The proof consists of constructing an inverse map of the period map. We give a brief sketch of the main arguments used in \cite{Kondo1}. Let $z\in \mathbb{B}^\circ$. There is a $K3$ surface $X$ together with a marking $\phi:H^2(X,\mathbb{Z})\rightarrow L$ such that the period point of $X$ is $z$. This  $K3$ surface $X$ has an automorphism $\rho_X$ of order four such that its action on $H^2(X,\mathbb{Z})$ corresponds to the action of $\rho$ on $L$. The quotient surface $Y=X/\left< \tau_X \right>$ with $\tau_X = \rho_X^2$ is a del Pezzo surface of degree two. Its anticanonical map: $|K_Y|:Y\rightarrow \mathbb{P}^2$ is a double cover of $\mathbb{P}^2$ ramified over a smooth plane quartic curve $C$. The inverse period map associates to the $P\Gamma$-orbit of $z\in \mathbb{B}^\circ$ the isomorphism class of this quartic curve $C$.
\end{proof}

Furthermore Kondo proves in \cite{Kondo1} Lemma 3.3 that there are two $\Gamma$-orbits of roots in $\Lambda_{1,6}$. This determines a decomposition $\mathcal{H}=\mathcal{H}_n \cup \mathcal{H}_h$ where:
\begin{equation}\label{mirrortypes}
\begin{aligned}
\mathcal{H}_n &= \left\{ H_r \in \mathcal{H} \ ; \ H_r\cap \Lambda_{1,6} \cong \Lambda_2^2\oplus \left( \begin{smallmatrix}-2&0\\0&2\end{smallmatrix}\right) \right\} \\
\mathcal{H}_h &= \left\{ H_r \in \mathcal{H} \ ; \ H_r\cap \Lambda_{1,6} \cong \Lambda_2^2 \oplus \Lambda_{1,1} \right\}.
\end{aligned} 
\end{equation} 
A smooth point of a mirror $H_r \in \mathcal{H}_n$ corresponds to a plane quartic curve with a node and a smooth point of a mirror $H_r \in \mathcal{H}_h$ corresponds to a smooth hyperelliptic curve of genus three.

\subsection{The lattices $L_+$ and $L_-$}
The main result of this section is Lemma \ref{extend} which states that an antiunitary involution of the Gaussian lattice $\Lambda_{1,6}$ can be lifted to an involution of the $K3$ lattice such that its fixed point lattice is of hyperbolic signature. This will be an important ingredient in the proof of one of our main results: the real analogue of Kondo's period map for real quartic curves in Section \ref{Periodsofreal}. We start with a detailed analysis of the lattices $L_+$ and $L_-$. 

The lattice $L_+ \cong (2) \oplus A_1^7$ has an orthogonal basis $\{e_0,\ldots,e_7\}$ that satisfies $(e_0,e_0)=2$ and $(e_i,e_i)=-2$ for $i=1,\ldots,7$. According to Kondo the automorphism $\rho$ acts on $L_+$ by fixing the element $k=-3e_0+e_1+\ldots+e_7$ and acting as $-1$ on its orthogonal complement $k^\perp$ in $L_+$. This special element $k$ satisfies $(k,k)=4$ and represents the canonical class of the del Pezzo surface $Y$. The orthogonal complement $k^\perp$ is isomorphic to the root lattice $E_7(2)$. By the results of Section \ref{Vinberg} there is an isomorphism of groups:
\[O(L_+)\cong O(L_+)^+ \times \mathbb{Z}/2\mathbb{Z}\]
where the second factor is generated by $-1\in O(L_+)$. The group $O(L_+)^+$ is a hyperbolic Coxeter group as we have seen in Example \ref{VinCoxex} and its Coxeter diagram shown is Figure \ref{CoxeterLtau}. 
 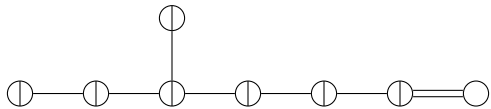
\begin{figure}[H]
\centering
\begin{tikzpicture}
\node[c,vert] (1) at (0,0) {};
\node[c,vert] (2) at (1,0) {};
\node[c,vert] (3) at (2,0) {};
\node[c,vert] (4) at (3,0) {};
\node[c,vert] (5) at (4,0) {};
\node[c,vert] (6) at (5,0) {};
\node[c,vert] (7) at (2,1) {};
\node[c] (8) at (6,0) {};
\draw[-] (1) -- (2) -- (3) -- (4) -- (5) -- (6);
\draw[-] (3) -- (7);
\draw[-,double distance = 2pt] (6) -- (8);
\end{tikzpicture}
\caption{The Coxeter diagram of the group $O(L_+)^+$.}
\label{CoxeterLtau}
\end{figure} 
From this diagram we see that the reflections in the long negative simple roots of $L_+$ form a subgroup $W(E_7)<O(L_+)^+$ of type $E_7$. It is precisely the stabilizer of the element $k \in L_+$. Recall from Section \ref{Lattices} that the discriminant group of a lattice $L$ is defined by $A_L=L^\vee/L$. Since the dual lattice $L_+^\vee$ can be naturally identified with the lattice $\frac{1}{2}L_+$ we have: 
\[A_{L_+} = \frac{1}{2}L_+/L_+ \cong (\mathbb{Z}/2\mathbb{Z})^8.\]  
\begin{prop}\label{mapL+}The natural map $O(L_+) \rightarrow O(A_{L_+})$ maps the subgroup $W(E_7)<O(L_+)^+$ isomorphically onto $O(A_{L_+})$.
\end{prop}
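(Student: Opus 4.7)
The plan is to combine Nikulin's Theorem \ref{Nik363} with the Coxeter structure of $O(L_+)^+$ to show that the restriction to $W(E_7)$ is already surjective onto $O(A_{L_+})$, and then use the simple decomposition of $W(E_7)$ to obtain injectivity. Since $L_+$ is an even, indefinite, $2$-elementary lattice of signature $(1,7)$, Theorem \ref{Nik363} gives a surjection $O(L_+) \twoheadrightarrow O(A_{L_+})$, and the task is to show that $W(E_7)$ already surjects and does so injectively.

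First I would write $O(L_+) = O(L_+)^+ \times \{\pm I\}$ and note that $-I$ acts trivially on $A_{L_+} = \tfrac{1}{2}L_+/L_+$, since $-2x\in L_+$ for every $x\in L_+^\vee$. The Coxeter diagram in Figure \ref{CoxeterLtau} has trivial automorphism group: the subdiagram on the seven norm-$(-4)$ nodes is the $E_7$ Dynkin diagram (which has no nontrivial symmetries), and the unique norm-$(-2)$ node is distinguished from the others by its length and by the double-edge attachment. Hence $O(L_+)^+ = W(L_+)$. The reflection in the norm-$(-2)$ simple root acts trivially on $A_{L_+}$ by a routine check, while the reflections in the seven norm-$(-4)$ simple roots generate the subgroup $W(E_7)$ of the statement. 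Consequently the image of $W(E_7)$ in $O(A_{L_+})$ equals the image of $O(L_+)$, which by Nikulin's result is all of $O(A_{L_+})$.

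For injectivity I would invoke the classical isomorphism $W(E_7) \cong \{\pm 1\} \times \mathrm{Sp}_6(\mathbb{F}_2)$ with $\mathrm{Sp}_6(\mathbb{F}_2)$ simple. Its only normal subgroups are $\{1\}$, the center $\{\pm 1\}$, the commutator subgroup $\mathrm{Sp}_6(\mathbb{F}_2)$, and the whole group, so it suffices to rule out the three nontrivial candidates for the kernel. The center of $W(E_7)$ fixes the canonical class $k$ and acts as $-1$ on $k^\perp \cong E_7(2)$; using the glue between $\mathbb{Z}k$ and $k^\perp$ inside $L_+$, a short calculation with $x = \tfrac{1}{2}e_0$ shows this central element produces a vector with half-integer coefficients not lying in $L_+$, so $\{\pm 1\}$ is not in the kernel. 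The kernel cannot be $\mathrm{Sp}_6(\mathbb{F}_2)$ either, since then all reflections in norm-$(-4)$ roots would have the same image in $O(A_{L_+})$, whereas $s_{e_1-e_2}$ fixes the class of $\tfrac{1}{2}e_0$ while $s_{e_0-e_1-e_2-e_3}$ sends it to $\tfrac{1}{2}(e_1+e_2+e_3)$ modulo $L_+$. The whole group being the kernel is excluded by either of these nontrivial reflections.

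The delicate point is verifying that the central element of $W(E_7)$ acts nontrivially on $A_{L_+}$: since it pointwise fixes $k$, one might naively expect the action to descend through $W(E_7) \to \mathrm{Sp}_6(\mathbb{F}_2)$, and it is precisely the nontrivial glue between $\mathbb{Z}k$ and $E_7(2)$ in $L_+$ that prevents this. Everything else is either a direct invocation of Nikulin's theorem or bookkeeping with the Coxeter diagram of $O(L_+)^+$.
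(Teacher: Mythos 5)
Your proof is correct, and its injectivity half takes a genuinely different route from the paper. The surjectivity argument coincides with the paper's: both you and the authors observe that $-I$ and the reflection in the norm $-2$ simple root act trivially on $A_{L_+}$ (the paper checks this for all short roots at once, using that the bilinear form on $L_+\cong(2)\oplus A_1^7$ is even valued), use the trivial symmetry of the diagram in Figure \ref{CoxeterLtau} to get $O(L_+)^+=W(L_+)$, and then invoke Theorem \ref{Nik363} to conclude that $W(E_7)$ already surjects onto $O(A_{L_+})$. For injectivity the paper simply quotes Kondo's Lemma 2.2, which identifies $O(A_{L_+})\cong W(E_7)^+\times\mathbb{Z}/2\mathbb{Z}\cong W(E_7)$, and concludes by comparing orders of finite groups; you instead analyze the kernel intrinsically via $W(E_7)\cong\{\pm 1\}\times \mathrm{Sp}_6(\mathbb{F}_2)$, whose only normal subgroups are $1$, the center, $\mathrm{Sp}_6(\mathbb{F}_2)$, and the whole group, and eliminate the three nontrivial candidates by explicit computations in $A_{L_+}=\tfrac{1}{2}L_+/L_+$. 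Your computations check out: with $k=-3e_0+e_1+\cdots+e_7$ one finds $w_0(e_0)=-e_0-3k$, so $w_0(\tfrac{1}{2}e_0)-\tfrac{1}{2}e_0=\tfrac{7}{2}e_0-\tfrac{3}{2}(e_1+\cdots+e_7)\notin L_+$, which kills the center; and $s_{e_0-e_1-e_2-e_3}$ moves $[\tfrac{1}{2}e_0]$ to $[\tfrac{1}{2}(e_1+e_2+e_3)]$ while $s_{e_1-e_2}$ fixes it, which rules out the kernel being $\mathrm{Sp}_6(\mathbb{F}_2)$ or everything (here you implicitly use that all reflections have determinant $-1$ and hence lie in the nontrivial coset of $\mathrm{Sp}_6(\mathbb{F}_2)=W(E_7)^+$, the unique index-two subgroup; this is fine since $\mathrm{Sp}_6(\mathbb{F}_2)$ is perfect, so $W(E_7)$ admits only one surjection onto $\mathbb{Z}/2\mathbb{Z}$). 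The trade-off: the paper's route is shorter given Kondo's lemma and simultaneously pins down the abstract structure of $O(A_{L_+})$, which is reused later (e.g.\ in Proposition \ref{compos} via the gluing isomorphism $O(A_{L_-})\cong O(A_{L_+})$); your route is self-contained, needing no computation of the order of $O(A_{L_+})$, and recovers Kondo's isomorphism $O(A_{L_+})\cong W(E_7)$ as a corollary rather than taking it as an input.
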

\begin{proof}
The bilinear form on $L_+\cong (2)\oplus A_1^7$ is even valued so that a reflection $s_r$ in a short root $r$ of norm $\pm 2$ satisfies:
\[ s_r(x) = x\pm (r,x)r \equiv x \bmod{L_+} \] 
for  $x\in \frac{1}{2}L_+$. This implies that these reflections are contained in the kernel of the map $O(L_+)\rightarrow O(A_{L_+})$. As a consequence the image of this map is generated by the subgroup $W(E_7)<O(L_+)^+$ of reflections in negative simple long roots. According to Kondo \cite{Kondo1} Lemma 2.2 the group $O(A_{L_+})$ is isomorphic to $W(E_7)^+ \times \mathbb{Z}/2\mathbb{Z} \cong W(E_7)$. Since the natural map $O(L_+)\rightarrow O(A_{L_+})$ is surjective, the proposition follows by Theorem \ref{Nik363}. \end{proof}

The $K3$ lattice $L$ is an even unimodular lattice and the primitive sublattices $L_+$ and $L_-$ satisfy: $L_-^\perp = L_+$. According to Proposition \ref{gluing} there is a natural isomorphism $O(A_{L_-}) \cong O(A_{L_+})$ which allows us to identify these groups. In particular we have $O(A_{L_-})\cong W(E_7)^+ \times \mathbb{Z}/2\mathbb{Z}$. We prefer to consider $L_-$ as the Gaussian lattice $\Lambda_{1,6}$ so that
\begin{equation}
\begin{aligned}
A_{\Lambda_{1,6}}&=\Lambda_{1,6}^\vee / \Lambda_{1,6} \\
& \cong \left( \frac{1}{1+i}\mathcal{G}/\mathcal{G} \right)^6 \times \frac{1}{2}\mathcal{G}/\mathcal{G}
\end{aligned}
\end{equation}
because $\Lambda_{1,6} = \Lambda_{1,5} \oplus \mathcal{G}(-2)$ with $\Lambda_{1,5}=\Lambda_2^2 \oplus \Lambda_{1,1}$ and $\Lambda_{1,5} = (1+i)\Lambda_{1,5}^\vee$.
\begin{rmk}\label{rhoaction}Note that there are isomorphism of additive groups $\frac{1}{1+i}\mathcal{G}/\mathcal{G} \cong \mathbb{Z}/2\mathbb{Z}$ and $\frac{1}{2}\mathcal{G}/\mathcal{G} \cong \mathbb{Z}/2\mathbb{Z} \times \mathbb{Z}/2\mathbb{Z}$. The generators of this last group are $\frac{1}{2}$ and $\frac{i}{2}$ and they are exchanged by multiplication by $i$.
\end{rmk}

\begin{prop}\label{compos}The composition of homomorphisms:
\[ U(\Lambda_{1,6}) \rightarrow O(A_{\Lambda_{1,6}}) \cong W(E_7)^+ \times \mathbb{Z}/2\mathbb{Z}\]
is given by reduction modulo $1+i$ on the first factor and the second factor is generated by the image of the central element $\rho \in U(\Lambda_{1,6})$.
\end{prop}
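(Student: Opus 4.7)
The plan is to identify a canonical index-$2$ subgroup $V' \subset A_{\Lambda_{1,6}}$ on which the action factors through the reduction map of Proposition \ref{reduction}, and then to verify that $\rho$ generates the residual $\mathbb{Z}/2\mathbb{Z}$. Concretely, using the orthogonal splitting $\Lambda_{1,6} = \Lambda_{1,5} \oplus \mathcal{G}(2)$ with $\Lambda_{1,5} = (1+i)\Lambda_{1,5}^\vee$, I would write $A_{\Lambda_{1,6}} = A_{\Lambda_{1,5}} \oplus A_{\mathcal{G}(2)}$ and put $V' := \tfrac{1}{1+i}\Lambda_{1,6}/\Lambda_{1,6}$. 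A short computation of the discriminant quadratic form on $A_{\mathcal{G}(2)} = \{0,\tfrac{1}{2},\tfrac{i}{2},\tfrac{1+i}{2}\}$ gives values $0,\tfrac{1}{2},\tfrac{1}{2},1$ in $\mathbb{Q}/2\mathbb{Z}$, while $q_{A}$ is integer-valued on $A_{\Lambda_{1,5}}$; therefore $V'$ is exactly the subgroup of $A_{\Lambda_{1,6}}$ where $q_A$ takes integer values, an intrinsic index-$2$ subgroup of rank $7$ over $\mathbb{F}_2$.

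Next I would identify $V'$ with the quadratic space $V$ of Proposition \ref{reduction}. Multiplication by $\tfrac{1}{1+i}$ is an $\mathbb{F}_2$-linear isomorphism $V = \Lambda_{1,6}/(1+i)\Lambda_{1,6} \xrightarrow{\sim} V'$, and since $h(\tfrac{1}{1+i}x,\tfrac{1}{1+i}x) = \tfrac{1}{2}h(x,x)$, it intertwines the quadratic forms. Because any $\gamma \in U(\Lambda_{1,6})$ commutes with scalar multiplication, this isomorphism is also $U(\Lambda_{1,6})$-equivariant. Since $V'$ is intrinsic, every element of $O(A_{\Lambda_{1,6}})$ preserves it, giving a restriction map $O(A_{\Lambda_{1,6}}) \to O(V',q_A|_{V'}) \cong O(V,q) \cong W(E_7)^+$. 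This is precisely the projection onto the first factor of the splitting $O(A_{\Lambda_{1,6}}) \cong W(E_7)^+ \times \mathbb{Z}/2\mathbb{Z}$, so the composition $U(\Lambda_{1,6}) \to O(A_{\Lambda_{1,6}}) \to W(E_7)^+$ coincides with the reduction-mod-$(1+i)$ homomorphism of Proposition \ref{reduction}.

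For the second factor, I would exploit the congruence $i \equiv 1 \pmod{(1+i)\mathcal{G}}$ (since $1-i = -i(1+i)$), which says $\rho$ reduces to the identity on $V$, equivalently on $V'$. Hence the image of $\rho$ in $O(A_{\Lambda_{1,6}})$ lies in the kernel of the restriction to $V'$, which is the $\mathbb{Z}/2\mathbb{Z}$ factor. It remains to verify this image is nontrivial; this follows at once from its action on $A_{\mathcal{G}(2)}$, where $i \cdot \tfrac{1}{2} = \tfrac{i}{2}$ and $i \cdot \tfrac{i}{2} = -\tfrac{1}{2} \equiv \tfrac{1}{2}$, so $\rho$ swaps the two cosets $\tfrac{1}{2}$ and $\tfrac{i}{2}$ while fixing $0$ and $\tfrac{1+i}{2}$. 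Hence the class of $\rho$ generates the $\mathbb{Z}/2\mathbb{Z}$ factor.

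The main obstacle I expect is justifying that the abstract splitting $O(A_{\Lambda_{1,6}}) \cong W(E_7)^+ \times \mathbb{Z}/2\mathbb{Z}$ coming from Kondo's description of $O(A_{L_+})$ via Nikulin's gluing (Proposition \ref{gluing}) and Proposition \ref{mapL+} really coincides with the intrinsic decomposition above, namely that the $W(E_7)^+$ factor is the image of the restriction $O(A_{\Lambda_{1,6}}) \to O(V')$. One way to dispatch this is to note that $V'$ is the unique index-$2$ subgroup of $A_{\Lambda_{1,6}}$ on which $q_A$ is integer-valued, so the restriction map is canonical; combined with the count $|O(A_{\Lambda_{1,6}})| = |W(E_7)|$ and surjectivity of $O(\Lambda_{1,6}) \to O(A_{\Lambda_{1,6}})$ (Theorem \ref{Nik363}), this forces the restriction to be the first projection in any splitting whose first factor is $W(E_7)^+$.
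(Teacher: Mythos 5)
Your proof is correct and takes essentially the same route as the paper: both identify the index-two subgroup $\frac{1}{1+i}\Lambda_{1,6}/\Lambda_{1,6}\subset A_{\Lambda_{1,6}}$ as the locus where the discriminant form is integral, identify it with the quadratic space $(V,q)$ of Proposition \ref{reduction} via $h\left(\tfrac{1}{1+i}x,\tfrac{1}{1+i}x\right)=\tfrac{1}{2}h(x,x)$, and observe that $\rho$ acts trivially there while swapping the cosets $\tfrac{1}{2}$ and $\tfrac{i}{2}$ of $\tfrac{1}{2}\mathcal{G}/\mathcal{G}$ (the paper's Remark \ref{rhoaction}), your final paragraph merely making explicit a compatibility the paper leaves implicit. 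The only blemish is the slip $\Lambda_{1,6}=\Lambda_{1,5}\oplus\mathcal{G}(2)$, which has the wrong signature --- the paper uses $\Lambda_{1,5}\oplus\mathcal{G}(-2)$ (equivalently $\Lambda_2^3\oplus(2)$) --- but this is harmless since your argument only uses that the first summand $\Lambda'$ satisfies $\Lambda'=(1+i)\Lambda'^{\vee}$ and the half-integrality of $q$ on the generators of $\tfrac{1}{2}\mathcal{G}/\mathcal{G}$.
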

\begin{proof}
Let $A_{\Lambda_{1,6}}^\prime$ be the subset of $A_{\Lambda_{1,6}}$ where the discriminant quadratic form takes values in $\mathbb{Z}/2\mathbb{Z}$. The Gaussian lattice $\Lambda_{1,6}$ satisfies $\Lambda_{1,6}\subset (1+i)\Lambda_{1,6}^\vee$ so that the following equalities hold:
\begin{align*}
A_{\Lambda_{1,6}}^\prime 
&= \{ x\in \Lambda_{1,6}^\vee /\Lambda_{1,6} \ ; \ h(x,x)\in \mathbb{Z} \} \\
 &= \frac{1}{1+i}{\Lambda_{1,6}} / {\Lambda_{1,6}}.
\end{align*}
By writing: $h(\frac{1}{1+i}x,\frac{1}{1+i}x)=\frac{1}{2}h(x,x)$ for $x\in \Lambda_{1,6}$ we see that the $\mathbb{F}_2$-vectorspace $A_{\Lambda_{1,6}}^\prime$ with its induced quadratic form $q_{\Lambda_{1,6}}$ is isomorphic to the quadratic space $(V,q)$ from Proposition \ref{reduction}. According to this proposition there is an isomorphism $O(A_{\Lambda_{1,6}}^\prime) \cong W(E_7)^+$ and the composition of natural maps: 
\[U(\Lambda_{1,6})\rightarrow O(A_{L_-})\rightarrow O(A_{L_-}^\prime) \cong W(E_7)^+\]
corresponds to mapping an element $g\in U(\Lambda_{1,6})$ to its reduction $\overline{g}$ modulo  $(1+i)$. The automorphism $\rho \in U(\Lambda_{1,6})$ corresponds to multiplication by $i$ and by definition commutes with every element in $U(\Lambda_{1,6})$. It maps to the identity in $O(A_{\Lambda_{1,6}}^\prime)$ but acts as a nontrivial involution in $O(A_{\Lambda_{1,6}})$ by Remark \ref{rhoaction}. This implies that $O(A_{\Lambda_{1,6}})$ is isomorphic to the direct product of $O(A_{\Lambda_{1,6}}^\prime)$ with the subgroup $\mathbb{Z}/2\mathbb{Z} \triangleleft  O(A_{\Lambda_{1,6}})$ generated by $\rho$. \end{proof}

\begin{lem}\label{extend}Let $\chi_- \in U(\Lambda_{1,6})^a$ be an antiunitary involution of $\Lambda_{1,6}$. There is a unique $\chi \in O(L)$ that restricts to $\chi_-$ on $L_-$ so that the fixed point lattice $L^\chi$ is of hyperbolic signature.\end{lem}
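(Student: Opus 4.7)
The plan is to construct $\chi$ by lifting $\chi_-$ through the orthogonal decomposition $L \supset L_+ \oplus L_-$ and invoking the gluing of Proposition \ref{gluing}. Since $L$ is unimodular with $L_- = L_+^\perp$, any $\chi \in O(L)$ restricting to $\chi_-$ on $L_-$ automatically preserves $L_+$; restricted to the finite-index sublattice $L_+ \oplus L_-$, it decomposes as $\chi_+ \oplus \chi_-$, and Proposition \ref{gluing} forces $q(\chi_+) = \gamma \circ q(\chi_-) \circ \gamma^{-1} =: \bar\chi_+$ in $O(A_{L_+})$.

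I would construct $\chi_+$ as follows. By Proposition \ref{mapL+}, the natural map $W(E_7) \to O(A_{L_+})$ is an isomorphism, so there is a unique $w \in W(E_7)$ with $q(w) = \bar\chi_+$; the relation $\bar\chi_+^2 = I$ combined with injectivity forces $w^2 = I$. However, $w$ itself fixes the canonical class $k$ pointwise (since $W(E_7)$ is exactly the stabilizer of $k$ in $O(L_+)$), which would give $L^\chi$ signature $(2,\cdot)$ rather than hyperbolic. The remedy is to set $\chi_+ := -w$. Because $L_+^\vee = \tfrac{1}{2}L_+$, multiplication by $-1$ acts trivially on $A_{L_+}$, so $-w$ still induces $\bar\chi_+$; clearly $\chi_+^2 = w^2 = I$. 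Proposition \ref{gluing} then produces an involution $\chi \in O(L)$.

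For the signature, $L^\chi$ agrees rationally with $L_+^{\chi_+} \oplus L_-^{\chi_-}$. The summand $L_-^{\chi_-} = \Lambda_{1,6}^{\chi_-}$ has signature $(1,6)$, while $\chi_+(k) = -w(k) = -k$ places $L_+^{\chi_+}$ inside the negative definite lattice $k^\perp \cong E_7(2)$. Hence $L^\chi$ is hyperbolic.

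For uniqueness, suppose $\chi'$ is another extension with hyperbolic $L^{\chi'}$, and set $h := \chi'_+ \chi_+^{-1}$. Then $h$ lies in the kernel of $O(L_+) \to O(A_{L_+})$ because $\chi_+$ and $\chi'_+$ induce the same element on the discriminant. The hyperbolic condition applied to both $\chi$ and $\chi'$ forces $\chi_+(k) = \chi'_+(k) = -k$, so $h(k) = k$. Since $-I$ reverses the positive ray of $L_+ \otimes \mathbb{R}$ whereas $O(L_+)^+$ preserves it, the stabilizer of $k$ in $O(L_+)$ is contained in $O(L_+)^+$ and coincides with $W(E_7)$ by the analysis preceding the lemma. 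Proposition \ref{mapL+} then gives $W(E_7) \cap \ker(O(L_+) \to O(A_{L_+})) = \{I\}$, whence $h = I$ and $\chi' = \chi$. The main obstacle is the uniqueness step: one must recognize that the hyperbolic signature condition is exactly equivalent to $\chi_+(k) = -k$, and then pin down the stabilizer of $k$ in $O(L_+)$ as precisely $W(E_7)$ in order to kill $h$ using Proposition \ref{mapL+}.
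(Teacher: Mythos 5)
Your existence argument is essentially the paper's own: you select the unique involution $w\in W(E_7)$ inducing the prescribed discriminant action (the paper reaches it via Proposition \ref{compos}, you directly via the isomorphism of Proposition \ref{mapL+}), set $\chi_+=-w$, glue by Proposition \ref{gluing}, and obtain negative definiteness of $L_+^{\chi_+}$ from $\chi_+(k)=-k$ together with $k^\perp\cong E_7(2)$. That half is correct and matches the paper step for step.

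The genuine gap is the pivot of your uniqueness argument: the claim that hyperbolicity of $L^{\chi'}$ is \emph{exactly equivalent} to $\chi'_+(k)=-k$. Only one implication holds. Since $L^{\chi'}$ agrees rationally with $L_+^{\chi'_+}\oplus L_-^{\chi_-}$ and the second summand has signature $(1,6)$, hyperbolicity is equivalent to $L_+^{\chi'_+}$ being negative definite --- and a negative definite fixed lattice does \emph{not} force $\chi'_+(k)=\pm k$. Concretely, in $L_+\cong(2)\oplus A_1^7$ with orthogonal basis $\{e_0,\ldots,e_7\}$ and $k=-3e_0+e_1+\cdots+e_7$, the norm $-4$ vectors $\beta=e_1-e_2$ and $\beta'=e_1+e_2$ define reflections in $O(L_+)$ with $t_{\beta'}=t_\beta s_{e_1}s_{e_2}$; the short-root reflections $s_{e_1},s_{e_2}$ lie in $\ker\bigl(O(L_+)\rightarrow O(A_{L_+})\bigr)$ (proof of Proposition \ref{mapL+}), so the involutions $-t_\beta$ and $-t_{\beta'}$ induce the same action on $A_{L_+}$ and both have negative definite rank-one fixed lattices, yet $\beta\perp k$ while $(\beta',k)=-4$, giving $-t_{\beta'}(k)=-k+2\beta'\neq -k$. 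Taking $\chi_-$ whose discriminant image matches $q(-t_\beta)$ (e.g.\ the class of $\chi_2$, of type $A_1$ in Table \ref{12conjclass}), Proposition \ref{gluing} glues \emph{both} $(-t_\beta,\chi_-)$ and $(-t_{\beta'},\chi_-)$ to involutions of $L$ restricting to $\chi_-$, each with fixed lattice of hyperbolic signature $(1,7)$; so the strong uniqueness, as you have formulated it, fails. Note that the paper's proof does not attempt your stronger statement: it constructs $\chi_+$ inside $W(E_7)\times\{\pm I\}$ and invokes only the uniqueness of the glued extension \emph{for that particular pair} $(\chi_+,\chi_-)$. What rescues a genuine uniqueness claim is the $\rho$-compatibility coming from the geometry (on $L_+$ the lifted involution commutes with $\rho$): that forces $\chi'_+$ to preserve the eigenspace decomposition $\mathbb{Q}k\oplus(k^\perp\otimes\mathbb{Q})$, hence $\chi'_+(k)=\pm k$, negative definiteness then rules out $+k$, and your remaining steps ($h(k)=k$, $h\in\ker$, $\Stab_{O(L_+)}(k)=W(E_7)$, injectivity from Proposition \ref{mapL+}) do close the argument.
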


\begin{proof}
Since complex conjugation on $\Lambda_{1,6}$ induces the identity on $O(A_{\Lambda_{1,6}})$ the statement of Proposition \ref{compos} is also true for the composition of homomorphisms:
\[ U(\Lambda_{1,6})^a \rightarrow O(A_{\Lambda_{1,6}})\cong W(E_7)^+\times \mathbb{Z}/2\mathbb{Z}.\]
Consider the image of the antiunitary involution $\chi_- \in U(\Lambda_{1,6})^a$ under this composition. This image is of the form $(\bar{u},\pm 1)$ where the involution $\bar{u}\in W(E_7)^+$ is obtained by reducing $\chi_- \in U(\Lambda_{1,6})^a$ modulo $(1+i)$. Observe that if the antiunitary involution $\chi_-$ maps to $(\bar{u},1)$ then $i\chi$ maps to $(\bar{u},-1)$. The involution 
\begin{equation} \chi_+=(\pm u,-1) \in W(E_7) \times \mathbb{Z}/2\mathbb{Z} < O(L_+) \end{equation}
maps to $(\bar{u},\pm1) \in O(A_{L_+})$ by Proposition \ref{mapL+}. Since $\chi_+$ maps $k \mapsto -k$ and $(k,k)=4$ the lattice of fixed points $L_+^{\chi_+}$ is negative definite. By Proposition \ref{gluing} there is a unique involution $\chi \in O(L)$ that restricts to $\chi_- \in U(\Lambda_{1,6})$ and $\chi_+ \in O(L_+)$ respectively. Since $\Lambda_{1,6}^{\chi_-}$ is of hyperbolic signature and $L_+^{\chi_+}$ is negative definite the fixed point lattice $L^\chi$ is of hyperbolic signature.
\end{proof}

\begin{prop}
Consider the $12$ antiunitary involutions $\chi_j$ and $i\chi_j$ for $j=1,\ldots,6$ from Theorem \ref{sixinvolutions}. For each of them the corresponding involution $\chi_+ \in O(L_+)$ is of the form $(u,-1)\in W(E_7)\times \mathbb{Z}/2\mathbb{Z}$. The conjugation classes of the involutions $u\in W(E_7)$ are shown in Table \ref{12conjclass}.
\begin{table}[H]
\centering
\[
\begin{array}{c|cccccc}
\toprule
j  & 1& 2& 3& 4& 5 & 6\\
\midrule
\chi_j & 1 & A_1 & A_1^2 & A_1^3 & D_4 & D_4\\
i\chi_j & E_7 & D_6 & D_4A_1 & A_1^4 & A_1^{3\prime} & A_1^{3\prime} \\
\bottomrule
\end{array}
\]
\caption{The conjugation classes in $W(E_7)$ of the $12$ antiunitary involutions $\chi_j,i\chi_j \in U(\Lambda_{1,6})^a$ for $j=1,\ldots,6$.}
\label{12conjclass}

\end{table}
\end{prop}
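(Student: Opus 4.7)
The plan is to establish the two claims of the proposition in turn. First, that $\chi_+=(u,-1)\in W(E_7)\times \mathbb{Z}/2\mathbb{Z}$, I would obtain directly from Lemma~\ref{extend}: $\chi_+$ maps $k\mapsto -k$, so composing with $-I\in O(L_+)$ yields an involution fixing $k$ and hence restricting to an automorphism of $k^\perp\cap L_+\cong E_7(2)$. Since the $E_7$ Dynkin diagram has no nontrivial automorphisms and $-I\in W(E_7)$, we have $O(E_7(2))=O(E_7)=W(E_7)$, so this restriction defines an element $u\in W(E_7)$ and $\chi_+=u\cdot(-I)=(u,-1)$ in the subgroup $W(E_7)\times \{\pm I\}\subset O(L_+)$.

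To identify the conjugation class of $u$ I would combine Propositions~\ref{mapL+}, \ref{gluing} and~\ref{compos}. The central involution $-I$ acts trivially on $A_{L_+}=\tfrac{1}{2}L_+/L_+$, so the image of $\chi_+$ in $O(A_{L_+})$ equals the image of $u$; by Proposition~\ref{mapL+} the map $W(E_7)\to O(A_{L_+})$ is an isomorphism, so this image pins down $u$. By Proposition~\ref{gluing} the image is conjugate, via the gluing isomorphism $\gamma:A_{L_+}\to A_{L_-}$, to the image of $\chi_-$ in $O(A_{L_-})$. Finally Proposition~\ref{compos} identifies $O(A_{L_-})\cong W(E_7)^+\times \mathbb{Z}/2\mathbb{Z}$ and sends $\chi_-$ to a pair $(\bar u_j,\epsilon)$ with $\bar u_j\in W(E_7)^+$ the reduction modulo $(1+i)$ (read off from Table~\ref{dimfix}) and $\epsilon\in\{\pm 1\}$ distinguishing $\chi_j$ from $i\chi_j=\rho\chi_j$, since the second factor is generated by $\rho$. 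Composing these three identifications recovers $u$ as the element of $W(E_7)\cong W(E_7)^+\times\{\pm 1\}$ corresponding to $(\bar u_j,\epsilon)$, and Wall's classification (recalled in the Appendix) then names the conjugation class.

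It then remains to compute $\epsilon$ for each of the twelve antiunitary involutions. For the five $\chi_j$ with $j=1,\dots,5$ the last summand is $\psi_1$, which fixes both generators $\tfrac{1}{2}$ and $\tfrac{i}{2}$ of $\tfrac{1}{2}\mathcal{G}/\mathcal{G}$, whereas $i\psi_1$ exchanges them (cf.\ Remark~\ref{rhoaction}); this yields $\epsilon=+1$ for $\chi_j$ and $\epsilon=-1$ for $i\chi_j$. For $\chi_6=\psi_4\oplus \psi_3$ I would apply Lemma~\ref{Gausiso} to rewrite $\psi_3$ on $\Lambda_2\oplus(-2)$ as $\psi_1\oplus \psi_2$ on $(-2)\oplus \Lambda_{1,1}$ and run the analogous computation. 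Matching each $(\bar u_j,\epsilon)$ with Wall's named classes then produces the entries of Table~\ref{12conjclass}.

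The hard part will be the sign bookkeeping: the pair $\{u_j,-u_j\}$ of conjugation classes in $W(E_7)$ reducing to the same $\bar u_j$ is not canonically ordered by the dimension of the fixed subspace. For instance $\chi_5$ is to correspond to the larger $D_4$-type while $\chi_4$ corresponds to the smaller $A_1^3$-type, so which of $u$ and $-u$ is actually realized has to be extracted from the discriminant calculation rather than guessed from fixed-rank considerations.
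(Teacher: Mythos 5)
Your first two paragraphs are a faithful (and more detailed) reconstruction of what the paper actually does: its proof of this proposition is the one\--line citation of Table~\ref{dimfix} together with the proof of Lemma~\ref{extend}, and that proof is exactly your chain --- $\chi_+$ has the form $(u,-1)$ because $k\mapsto -k$ is forced by hyperbolicity of $L^\chi$, the central $-I$ acts trivially on $A_{L_+}=\frac{1}{2}L_+/L_+$, Proposition~\ref{mapL+} pins down $u$ from its discriminant action, Proposition~\ref{gluing} transports this to the image of $\chi_-$ in $O(A_{L_-})$, and Proposition~\ref{compos} writes that image as a pair $(\bar u_j,\epsilon)$ with $\bar u_j$ read off from Table~\ref{dimfix}. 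Up to this point the proposal is correct and matches the paper.

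The gap is in your third paragraph, which is precisely the remaining content of the proposition: the computation of $\epsilon$. In the decomposition $O(A_{\Lambda_{1,6}})\cong O(A_{\Lambda_{1,6}}^\prime)\times\langle\overline{\rho}\rangle$, the complement of $\langle\overline{\rho}\rangle$ is the derived subgroup, i.e.\ the kernel of the \emph{unique} nontrivial character of $O(A_{\Lambda_{1,6}})\cong W(E_7)^+\times\mathbb{Z}/2\mathbb{Z}\cong W(E_7)$; transported through Propositions~\ref{gluing} and~\ref{mapL+} this character becomes the determinant on $W(E_7)$ (all reflections in $W(E_7)$ are conjugate, so there is only one nontrivial character), and hence $u(\chi_j)$ is the member of the pair $\{u,-u\}$ with $\det u=\epsilon(\chi_j)$. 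Membership in the derived subgroup is \emph{not} detected by the action on the summand $\frac{1}{2}\mathcal{G}/\mathcal{G}$: that summand is not canonically embedded in $A_{\Lambda_{1,6}}$, and elements of the $O(A^\prime)$-copy may carry $\frac{1}{2}e_7$ to $\frac{1}{2}e_7+a^\prime$ with $a^\prime\in A^\prime$, so ``$\psi_1$ fixes $\frac{1}{2}$ and $\frac{i}{2}$'' does not yield $\epsilon=+1$. If your recipe were valid it would give $\epsilon(\chi_j)=+1$ uniformly for $j=1,\dots,5$, hence $u(\chi_2)=D_6$ and $u(\chi_4)=A_1^4$ (the determinant\--$(+1)$ members of their pairs), contradicting the table you are proving, where $A_1$ and $A_1^3$ have determinant $-1$; note that the correct signs alternate with the number of $\psi_2^\prime$-blocks, all of which act trivially on $\frac{1}{2}\mathcal{G}/\mathcal{G}$, so summands other than the last one manifestly contribute to $\epsilon$. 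Your own closing example ($\chi_4\mapsto A_1^3$, the ``smaller'' type) is thus inconsistent with your middle computation. A workable repair is to determine the sign from the fixed\--lattice gluing instead: $L^\chi$ is even and $2$-elementary, while $\Lambda_{1,6}^{\chi_j}$ (Table~\ref{12lattices}) carries one $A_1(2)$-summand per $\psi_2^\prime$-block and a $D_4(2)$ for $\psi_4$, whose order\--four discriminant classes must be glued against matching classes of $L_+^{\chi_+}$, the $(-1)$-eigenlattice of $u$ in $E_7(2)$; this recovers the row $1,A_1,A_1^2,A_1^3,D_4,D_4$, and the $i\chi_j$ row then follows because $i\chi_j$ reverses $\epsilon$ and so picks out the other member of each pair. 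Alternatively one can compute the images of the explicit matrices in $O(A_{\Lambda_{1,6}})$ and test membership in the derived subgroup directly; what one cannot do is read $\epsilon$ off the last block alone.
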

\begin{proof}
This follows from Table \ref{dimfix} and the proof of Lemma \ref{extend}.
\end{proof}

\subsection{Periods of real quartic curves}\label{Periodsofreal}

Let $C=\{ f(x,y,z) = 0 \} \subset \mathbb{P}^2$ be a smooth real plane quartic curve. This means that $C$ is invariant under complex conjugation of $\mathbb{P}^2(\mathbb{C})$ or equivalently that the polynomial $f$ has real coefficients. The $K3$ surface $X$ that corresponds to $C$ is also defined by an equation with real coefficients. Complex conjugation on $\mathbb{P}^3(\mathbb{C})$ induces an antiholomorphic involution $\chi_X$ on $X$.

\begin{dfn}
A $K3$ surface $X$ is called real if it is equipped with an antiholomorphic involution $\chi_X$. We will also call such an involution a real form of $X$. The real points of $X$, which we denote by $X(\mathbb{R})$, are the fixed points of the real form.  
\end{dfn}

\begin{thm}\label{realK3}Let $\chi$ be an involution on the $K3$ lattice $L$. There exists a marked $K3$ surface $(X,\phi)$ such that $\chi_X=\phi^{-1}\circ \chi \circ \phi$ induces a real form on $X$ if and only if the lattice of fixed points $L^\chi$ has hyperbolic signature. 
\end{thm}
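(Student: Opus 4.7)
My plan is to prove both implications using Hodge theory, the surjectivity of the period map for $K3$ surfaces and the Torelli theorem, with a standard Baire category step for the genericity of periods.

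For the ``only if'' direction, I would start with a marked $K3$ surface $(X,\phi)$ carrying an antiholomorphic involution $\chi_X$ that induces $\chi$ on $L$, and inspect the positive $3$-plane in $L\otimes\mathbb{R}$ spanned by $\mathrm{Re}(\omega), \mathrm{Im}(\omega)$ and a Kähler class $\kappa$, where $\omega$ generates $H^{2,0}(X)$. A local computation in a real chart (e.g.\ $\chi^*(\tfrac{i}{2}\,dz\wedge d\bar z)=-\tfrac{i}{2}\,dz\wedge d\bar z$) shows that any antiholomorphic involution exchanges the two components of the positive cone in $H^{1,1}(X,\mathbb{R})$; averaging an arbitrary Kähler class $\kappa_0$ with $-\chi_X^*\kappa_0$ therefore produces a Kähler class satisfying $\chi_X^*\kappa=-\kappa$. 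Further, $\chi_X^*\omega=c\bar\omega$ with $|c|=1$, and after rescaling $\omega$ by a unit one may take $c=1$. Then $\mathrm{Re}(\omega)$ lies in $L^\chi$ while $\mathrm{Im}(\omega)$ and $\kappa$ both lie in $L^{-\chi}$; the fixed lattice $L^\chi$ thus contributes exactly one positive direction to the $3$-plane and is hyperbolic.

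For the ``if'' direction, assume $L^\chi$ has signature $(1,a)$, so that $L^{-\chi}$ has signature $(2,b)$. I would pick positive-norm vectors $x\in L^\chi\otimes\mathbb{R}$ and $y\in L^{-\chi}\otimes\mathbb{R}$ of equal norm; they are automatically orthogonal, and $\omega:=x+iy$ satisfies $(\omega,\omega)=0$, $(\omega,\bar\omega)>0$ and $\chi\omega=\bar\omega$, so $[\omega]$ is a point of the period domain for $K3$ surfaces. By perturbing $x$ and $y$ inside their respective positive cones I would arrange that every $(-2)$-vector of $L$ orthogonal to $\omega$ already lies in $L^\chi$ or in $L^{-\chi}$; this is a Baire-category argument applied separately in each eigenspace. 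The surjectivity of the period map then yields a marked $K3$ surface $(X,\phi)$ with period $[\omega]$, and the corresponding $\chi$ on $H^2(X,\mathbb{Z})$ is a lattice isometry that swaps $H^{2,0}$ and $H^{0,2}$.

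To conclude, I would invoke the strong Torelli theorem for antiholomorphic isometries: the genericity from the previous step guarantees that the $(-1)$-eigenspace of $\chi$ on $H^{1,1}(X,\mathbb{R})$, which has hyperbolic signature $(1,b-1)$, meets the Kähler cone of $X$, so a $\chi$-anti-invariant Kähler class exists. Since $\chi$ then sends the Kähler cone to its negative, Torelli supplies a unique antiholomorphic automorphism $\chi_X$ of $X$ inducing $\chi$, and the identity $\chi_X^2=\mathrm{id}$ is forced by $\chi^2=\mathrm{id}$ on $H^2(X,\mathbb{Z})$. The main obstacle is this last step: combining the Baire-genericity of the period with the real-structure refinement of the Torelli theorem, as developed by Nikulin and Kharlamov in their foundational work on real $K3$ surfaces, to guarantee that an abstract lattice involution with hyperbolic fixed part is geometrically induced.
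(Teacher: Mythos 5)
The paper does not prove this theorem at all: its ``proof'' is the citation \emph{see Silhol, Chapter VIII, Theorem 2.3}. What you have written is essentially a reconstruction of that standard Hodge-theoretic argument (Silhol, following Nikulin and Kharlamov): the ``only if'' direction via the averaged anti-invariant K\"ahler class and the rescaling $\chi_X^*\omega=\bar\omega$ is exactly right, as is the overall strategy for the converse --- build $\omega=x+iy$ with $x\in L^\chi\otimes\mathbb{R}$, $y\in L^{-\chi}\otimes\mathbb{R}$, invoke surjectivity of the period map, and then apply strong Torelli to the Hodge isometry $\chi\colon H^2(X)\to H^2(\bar X)$, with $\chi_X^2=\mathrm{id}$ forced by the fact that a holomorphic automorphism of a $K3$ acting trivially on $H^2$ is the identity. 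So relative to the paper you are supplying the actual content behind an outsourced step, which is valuable; the comparison is simply citation versus proof.

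One point in your sketch needs tightening, because as stated it does not deliver what you use it for. Your genericity condition --- every $(-2)$-vector orthogonal to $\omega$ lies in $L^\chi$ \emph{or} in $L^{-\chi}$ --- is too weak: if a $\chi$-\emph{invariant} root $\delta$ lies in $\Pic(X)=L\cap\omega^\perp$, then every anti-invariant class $\kappa$ satisfies $(\kappa,\delta)=(\chi\kappa,\chi\delta)=-(\kappa,\delta)=0$, while Riemann--Roch makes $\pm\delta$ effective; hence no anti-invariant class can be K\"ahler and your third step fails for that period. You must demand that all roots orthogonal to $\omega$ lie in $L^{-\chi}$, and your own Baire argument gives this: a generic $x$ in the positive cone of $L^\chi\otimes\mathbb{R}$ avoids the countably many hyperplanes $(\delta^+)^\perp$ for roots $\delta$ with nonzero invariant part $\delta^+$, which kills both the invariant and the mixed roots at once. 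With only anti-invariant roots left, each reflection $s_\delta$ commutes with $\chi$ (since $\chi\delta=-\delta$), so adjusting the marking by a Weyl element to place the chamber of your anti-invariant positive class inside the K\"ahler cone does not change the induced involution --- this commutation is the hidden reason the last step works and is worth making explicit. (A cosmetic remark: the $(-1)$-eigenspace of $\chi$ on $H^{1,1}(X,\mathbb{R})$ has signature $(1,b)$, not $(1,b-1)$, in your notation where $L^{-\chi}$ has signature $(2,b)$; this does not affect the argument, which only needs one positive direction there.)
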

\begin{proof}
See \cite{Silhol} Chapter VIII Theorem $2.3$.
\end{proof} 

Suppose $(X,\chi_X)$ is a real $K3$ surface. By choosing a marking we obtain an involution $\chi$ of the $K3$ lattice $L$. By Theorem \ref{realK3} the fixed point lattice of this involution $L^\chi$ is of hyperbolic signature. Since the $K3$ lattice is an even unimodular lattice, the lattice $L^\chi$ is even and $2$-elementary. According to Proposition \ref{2invariants} the isomorphism type of $L^\chi$ is determined by three invariants $(r,a,\delta)$ where $r=r_++r_-=1+r_-$. It is clear that these invariants do not depend on the marking of $X$. The following theorem originally due to Kharlamov \cite{KharTop} shows that they determine the topological type of the real point set $X(\mathbb{R})$. We will write $S_g$ for a real orientable surface of genus $g$ and $kS$ for the disjoint union of $k$ copies of a real surface $S$.

\begin{thm}[Nikulin \cite{NikulinSym} Thm. 3.10.6]\label{NikulinRealInv}
Let $(X,\chi_X)$ be a real $K3$ surface. Then:
\[ 
X(\mathbb{R}) = \begin{cases}\emptyset & \text{if } (r,a,\delta) = (10,10,0) \\ 2S_1 & \text{if } (r,a,\delta) = (10,8,0)  \\
S_g \sqcup kS_0 & \text{otherwise} \end{cases}
\]
where $g=\frac{1}{2}(22-r-a)$ and $k=\frac{1}{2}(r-a)$. 
\end{thm}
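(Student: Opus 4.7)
The plan is to exploit the fact that the deformation type of the real K3 surface $(X,\chi_X)$, and hence the diffeomorphism type of $X(\mathbb{R})$, is determined by the isomorphism class of the fixed lattice $L^\chi$, which in turn is determined by $(r,a,\delta)$ via Proposition \ref{2invariants}. So the task reduces to computing the topological invariants of $X(\mathbb{R})$ from $(r,a,\delta)$ and then verifying that a representative real K3 surface can be constructed for each admissible triple.

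The first step is to observe that $X(\mathbb{R})$ is automatically a smooth closed orientable real surface: at a real point the tangent space carries $\chi_X$ as an antiholomorphic involution, giving a real 2-dimensional fixed subspace, and the real part of the nowhere-zero holomorphic two-form $\omega$ (normalized so that $\chi_X^\ast\omega=\bar\omega$) restricts to a nowhere-zero real 2-form on $X(\mathbb{R})$. Hence $X(\mathbb{R})=\bigsqcup_j S_{g_j}$ and one only needs to compute the total Euler characteristic, the number of components, and the distribution of genera. The Euler characteristic follows from the holomorphic Lefschetz formula applied to $\chi_X$: since $\chi_X$ preserves the real orientation of $X$, one has
\[
\chi(X(\mathbb{R}))\;=\;2+\operatorname{tr}\!\left(\chi^{\ast}\mid H^2(X,\mathbb{R})\right)\;=\;2+\bigl(r-(22-r)\bigr)\;=\;2r-20,
\]
which already matches $(2-2g)+2k$ with $g=(22-r-a)/2$ and $k=(r-a)/2$.

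The second step is to pin down the total Betti number $\dim_{\mathbb{F}_2}H_\ast(X(\mathbb{R});\mathbb{F}_2)$. Smith theory applied to the $\mathbb{Z}/2$-action $\chi_X$ on $X$ together with the identification $H^\ast(X;\mathbb{F}_2)^{\chi}=H^\ast(X;\mathbb{F}_2)/(1+\chi)$ gives, after a short argument with the Smith exact sequence, the equality $\dim_{\mathbb{F}_2}H_\ast(X(\mathbb{R});\mathbb{F}_2)=22-2\,\dim_{\mathbb{F}_2}\!\bigl((L/(1+\chi)L)\otimes\mathbb{F}_2\bigr)+\ldots$; one reinterprets the right-hand side using the $2$-elementary structure of $L^{\chi}$ and its orthogonal complement in $L$ to obtain $22-r-a+2k$ in the generic case. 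Combined with the Euler characteristic and the fact that all components are orientable, this yields the genus/component split $(g,k)=\bigl((22-r-a)/2,(r-a)/2\bigr)$.

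The third step is to identify the two exceptional rows. These arise exactly when $\delta=0$, because $\delta=0$ forces the discriminant form of $L^\chi$ to be integer valued, which by the Guillou–Marin (or Rokhlin-type) congruence imposes $\chi(X(\mathbb{R}))\equiv \sigma(X)\equiv 0\pmod{16}$. For $(r,a,\delta)=(10,10,0)$ one has $\chi=0$ with $g=1, k=0$ formally, but Smith theory forces $X(\mathbb{R})=\emptyset$; for $(10,8,0)$ one similarly forces the maximal case $X(\mathbb{R})=2S_1$. Finally one checks that each admissible $(r,a,\delta)$ is realized by an explicit real K3 surface (for example via Kondo's construction, or by Nikulin's surjectivity theorem for periods of real K3 surfaces), completing the classification.

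The main obstacle is the middle step: translating the lattice datum $(a,\delta)$ into the number of connected components and the orientability structure. This requires the Guillou–Marin congruence and the analysis of the quadratic form $q_{L^\chi}$ on $A_{L^\chi}$, which is the technical heart of Nikulin's proof and is where $\delta$ genuinely enters; the Euler characteristic alone cannot distinguish $(10,10,0)$ and $(10,8,0)$ from neighboring triples with the same $\chi$.
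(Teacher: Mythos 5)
The paper offers no proof of this statement at all: it is imported verbatim from Nikulin (Thm.\ 3.10.6), so your proposal has to be measured against the known Nikulin--Kharlamov argument, whose general shape (orientability via the holomorphic $2$-form, Lefschetz for the Euler characteristic, Smith theory, congruences, explicit models) you correctly recall. But two of your steps fail as written. In the middle step, the Euler characteristic $\chi(X(\mathbb{R}))=2r-20$ together with the total $\mathbb{F}_2$-Betti number (which for a nonempty real $K3$ is $2+2g+2k=24-2a$, not the $22-r-a+2k$ you wrote, and whose equality case requires Krasnov's Galois-maximality theorem, not just the Smith inequality) determine only the number of components $c=k+1$ and the \emph{total} genus $\sum_j g_j=g$; they cannot distinguish $S_g\sqcup kS_0$ from a configuration with two or more components of positive genus, e.g.\ $S_2\sqcup S_0$ from $2S_1$. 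So the claimed conclusion ``one component of genus $g$, all others spheres'' does not follow from these two invariants, and that is exactly the nontrivial part. Your third step has the same defect in a sharper form: in both exceptional rows the Euler characteristic is $0\equiv\sigma(X)=-16\pmod{16}$ \emph{for the wrong candidates as well} ($S_1$ for $(10,10,0)$, $S_2\sqcup S_0$ for $(10,8,0)$), so a Rokhlin/Guillou--Marin congruence on $\chi(X(\mathbb{R}))$ alone excludes nothing; the genuine argument needs the Brown-invariant refinement for the characteristic surface $X(\mathbb{R})$ (this is where $\delta=0$, i.e.\ $[X(\mathbb{R})]=0$ in $H_2(X;\mathbb{F}_2)$, enters). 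Likewise ``Smith theory forces $X(\mathbb{R})=\emptyset$'' is false: Smith theory is an inequality, and the Galois-maximal equality $24-2a=4$ for $(10,10,0)$ would, if anything, predict a torus rather than the empty set.

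The one route in your sketch that does work is the one you assume in the first sentence and then treat as a closing consistency check: if the real deformation class, hence the diffeomorphism type of $X(\mathbb{R})$, is determined by $(r,a,\delta)$, then exhibiting one model per admissible triple finishes everything --- e.g.\ the quartic $x^4+y^4+z^4+w^4=0$ has empty real locus and fixed lattice $U(2)\oplus E_8(2)$ with invariants $(10,10,0)$, which is precisely the computation this paper performs when identifying $\chi_6$ with $\mathcal{Q}_6^{\mathbb{R}}$. But that determinacy is itself the hard content: Proposition \ref{2invariants} only classifies the lattice, and one still needs the global Torelli theorem, surjectivity of the period map, and connectedness of the period domain attached to each involution. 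As it stands your proposal is caught between two strategies: the purely arithmetic one is under-determined at exactly the points listed above, and the deformation-theoretic one is asserted rather than proved, which makes steps two and three redundant whenever step one is available.
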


\begin{rmk}There are two antiholomorphic involutions on the $K3$ surface $X = \{w^4 = f(x,y,z)\}$. Since we chose the sign of $f(x,y,z)$ to be positive on the interior of the curve $C(\mathbb{R})$ the  antiholomorphic involution $\chi_X$ is determined without ambiguity.
\end{rmk}
By fixing a marking $\phi:H^2(X,\mathbb{Z}) \rightarrow L$ of the $K3$ surface $X$ we associate to $\chi_X$ the involution:
\[ \chi=\phi \circ \chi_X^\ast \circ \phi^{-1}\] 
of the $K3$ lattice $L$. Since the involution $\chi$ commutes with $\tau$ it preserves the $\pm 1$-eigenlattices of the involution $\tau$. We denote by $\chi_-$ (resp. $\chi_+$) the induced involution on $L_-$ (resp. $L_+$). It is clear that $\chi$ and $\rho$ satisfy the relation:
\[ \rho \circ \chi = \tau \circ \chi \circ \rho \]
so that on the eigenlattice $L_-$ where $\tau$ acts as $-1$ they anticommute and on $L_+$ they commute. This implies that $\chi_-$ is an antiunitary involution of the Gaussian lattice $\Lambda_{1,6}$.

By the results of Section \ref{Kondomain} on Kondo's period map we can associate to a smooth real plane quartic curve $C$ a period point $[x]\in \mathbb{B}^\circ$ and the real form $[\chi_-]$ of $\mathbb{B}$ we just defined fixes $[x]$. The following lemma shows that the $P\Gamma$-conjugation class of $[\chi_-]$ does not change if we vary $C$ in its connected component of $\mathcal{Q}^\mathbb{R}$. 
\begin{lem}\label{realisom}
If two smooth real plane quartic curves $C$ and $C^\prime$ are real isomorphic then the projective classes $[\chi_-]$ and $[\chi_-^\prime]$ of their corresponding antiunitary involutions in $\Lambda_{1,6}$ are conjugate in $P\Gamma$.
\end{lem}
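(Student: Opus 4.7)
The plan is to realize the real isomorphism $C \simeq_{\mathbb{R}} C'$ at the level of the associated K3 surfaces, and then pull back to the lattice level via markings.

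\textbf{Step 1 (lifting).} A real isomorphism $C \to C'$ is realized by some $g \in PGL_3(\mathbb{R})$ with $g \cdot C = C'$. Writing $X = \{w^4 = f\}$ and $X' = \{w^4 = f'\}$, the relation $f' \circ g = c \cdot f$ for some $c \in \mathbb{R}^\ast$ allows us to lift $g$ to an isomorphism $\tilde g : X \to X'$ of the form $(w, p) \mapsto (\lambda w, g(p))$ with $\lambda^4 = c$. By construction $\tilde g$ commutes with the covering transformation: $\tilde g \circ \rho_X = \rho_{X'} \circ \tilde g$. Since $g$ is defined over $\mathbb{R}$, comparing $\chi_{X'} \circ \tilde g$ with $\tilde g \circ \chi_X$ produces at worst a scalar discrepancy $\bar\lambda / \lambda \in \mathcal{G}^\ast$ on the first coordinate, so $\chi_{X'} \circ \tilde g = \tilde g \circ \rho_X^k \circ \chi_X$ for some $k \in \{0,1,2,3\}$.

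\textbf{Step 2 (lattice translation).} Choose markings $\phi : H^2(X, \mathbb{Z}) \to L$ and $\phi' : H^2(X', \mathbb{Z}) \to L$, both compatible with the covering action in the sense that $\rho_X^\ast$ and $\rho_{X'}^\ast$ both correspond to the same prescribed $\rho \in O(L)$ defining the Gaussian structure on $L_- = \Lambda_{1,6}$. Set $\psi = \phi \circ \tilde g^\ast \circ (\phi')^{-1} \in O(L)$. The commutation relations of Step 1 translate to $\psi \circ \rho = \rho \circ \psi$ and $\chi \circ \rho^k \circ \psi = \psi \circ \chi'$. The first equality says that $\psi$ preserves the eigenlattice decomposition $L = L_+ \oplus L_-$ and that its restriction $\psi_-$ to $\Lambda_{1,6}$ lies in $U(\Lambda_{1,6})$ by Lemma \ref{rhocommutes}. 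Restricting the second equality to $\Lambda_{1,6}$, where $\rho$ acts as multiplication by $i$, gives $\psi_- \, \chi'_- \, \psi_-^{-1} = i^{-k} \chi_-$, i.e.\ the projective classes $[\chi_-]$ and $[\chi'_-]$ are conjugate under $[\psi_-] \in P\Gamma$.

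\textbf{Main obstacle.} The delicate part is coordinating three separate ambiguities: the choice of fourth root $\lambda$ (four possibilities), the choice of markings (unique only up to elements of $O(L)$ commuting with $\rho$), and the extent to which $\tilde g$ intertwines the real structures. Each of these contributes at most a scalar in $\mathcal{G}^\ast = \langle i \rangle$, coming from $\rho$, or an element of $U(\Lambda_{1,6})$ itself, and hence disappears upon passage to the projective group $P\Gamma$. Once one verifies that $\rho$-compatible markings of both $X$ and $X'$ can always be chosen -- which is standard for K3 surfaces equipped with an order-four symplectic automorphism -- the argument above supplies the required conjugation and completes the proof.
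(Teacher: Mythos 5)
Your proposal is correct and follows essentially the same route as the paper: lift the real projective transformation $g\in PGL_3(\mathbb{R})$ to an isomorphism of the quartic $K3$ covers commuting with the covering automorphisms, transfer it to $O(L)$ via $\rho$-compatible markings, and restrict to $L_-$ to obtain the conjugating element of $P\Gamma$ --- the paper merely avoids your $\rho^k$-twist by taking the lift in $PGL_4(\mathbb{R})$ (possible because the sign convention on $f$ forces the scalar $c$ to be positive), whereas you absorb the twist projectively, which is equally valid. One terminological slip in your final paragraph: $\rho_X$ is a \emph{non}-symplectic automorphism of order four (it acts by a primitive fourth root of unity on $H^{2,0}$), not symplectic, though the existence of compatible markings you invoke is still the standard fact underlying Kondo's period map and is implicitly assumed in the paper's proof as well.
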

\begin{proof}
Since $C$ and $C^\prime$ are real plane curves a real isomorphism $C\rightarrow C^\prime$ is induced from an element in $PGL(3,\mathbb{R})$. We can lift this element to $PGL(4,\mathbb{R})$ so that it induces an isomorphism $\alpha_C:X\rightarrow X^\prime$ that commutes with the covering transformations $\rho_X$ and $\rho_{X^\prime}$ of $X$ and $X^\prime$. Since the real forms $\chi_X$ and $\chi_X^\prime$ of $X$ and $X^\prime$ are both induced by complex conjugation on $\mathbb{P}^3$ they satisfy $\chi_X^\prime = \alpha_C \circ \chi_X \circ \alpha_C^{-1}$. By fixing markings of the $K3$ surfaces $X$ and $X^\prime$ we obtain induced orthogonal transformations $\chi,\chi^\prime$ and $\alpha$ of the $K3$-lattice $L$ such that $\chi^\prime = \alpha \circ \chi \circ \alpha^{-1}$. Since $\alpha$ commutes with $\rho$ the restriction $\alpha_-$ of $\alpha$ to $L_-$ is contained in $\Gamma$. This proves the lemma.\end{proof}

Let $\mathbb{B}^{\chi_-}$ be the fixed point set in $\mathbb{B}$ of the real form $[\chi_-]$. The fixed point lattice $\Lambda_{1,6}^{\chi_-}$ has hyperbolic signature $(1,6)$ so that $\mathbb{B}^{\chi_-}$ is the real hyperbolic ball
 \[ \mathbb{B}^{\chi_-} = \mathbb{P}\{ x\in \Lambda_{1,6}^{\chi_-} \otimes_\mathbb{Z} \mathbb{R} \ ; \ h(x,x) >0 \} .\]
As before we denote by $P\Gamma^{\chi_-}$ the stabilizer of $\mathbb{B}^{\chi_-}$ in the ball $\mathbb{B}$. Since the period point of a smooth real quartic curve $C$ is fixed by $[\chi_-]$ it lands in the real ball quotient: $P\Gamma^{\chi_-}\backslash (\mathbb{B}^{\chi_-})^\circ$. This gives rise to a real period map. More precisely we have the following real analogue of Theorem \ref{Kondomain}.

\begin{thm}\label{realK3period}The real period map $\Per^\mathbb{R}$ that maps a smooth real plane quartic to its period point in $P\Gamma \backslash \mathbb{B}^\circ$ defines an isomorphism of real analytic orbifolds:
\begin{equation} \Per^\mathbb{R}: \mathcal{Q}^\mathbb{R} \rightarrow \coprod_{[\chi_-]} P \Gamma^{\chi_-} \big\backslash  \left( \mathbb{B}^{\chi_-} \right)^\circ  
\end{equation}
where $[\chi_-]$ varies over the $P\Gamma$-conjugacy classes of projective classes of antiunitary involutions of $\Lambda_{1,6}$. 
\end{thm}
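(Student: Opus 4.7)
The plan is to deduce this real statement from the complex Torelli-type Theorem \ref{Kondomain} together with the lattice-theoretic lifting provided by Lemma \ref{extend}, showing in turn well-definedness, injectivity, surjectivity, and analyticity of $\Per^\mathbb{R}$.

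First I would check well-definedness. Starting from a smooth real quartic $C$, form the associated $K3$ surface $X$ with covering involution $\tau_X$ and antiholomorphic involution $\chi_X$ induced by complex conjugation, and pick a marking $\phi$. This produces simultaneously the period point $[x]\in\mathbb{B}^\circ$ and an antiunitary involution $\chi_-$ of $\Lambda_{1,6}$ fixing $[x]$. A change of marking replaces $([x],\chi_-)$ by a simultaneous $\Gamma$-translate, so this pair descends to a well-defined element of $\coprod_{[\chi_-]}P\Gamma^{\chi_-}\backslash(\mathbb{B}^{\chi_-})^\circ$, and Lemma \ref{realisom} shows the class is unchanged under a real isomorphism $C\to C'$.

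For injectivity, assume $\Per^\mathbb{R}(C)=\Per^\mathbb{R}(C')$. Then in particular the underlying complex periods in $P\Gamma\backslash\mathbb{B}^\circ$ coincide, so Theorem \ref{Kondomain} yields a complex isomorphism $\alpha\colon C\to C'$. To upgrade $\alpha$ to a real isomorphism, choose markings $\phi,\phi'$ so that the two antiunitary involutions are literally equal and the period points lie in the same $P\Gamma^{\chi_-}$-orbit. The induced Hodge isometry of $L$ then intertwines the two lifts $\chi$ extending $\chi_-$, and by the strong Torelli theorem for $K3$ surfaces the realization of $\alpha$ at the level of $X$ and $X'$ can be chosen to commute with the antiholomorphic involutions $\chi_X,\chi_{X'}$. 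Passing back down to the del Pezzo quotients and then to $\mathbb{P}^2$ yields a real isomorphism $C\to C'$.

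Surjectivity is the heart of the argument. Given $[\chi_-]$ and a point $[x]\in(\mathbb{B}^{\chi_-})^\circ$, Lemma \ref{extend} produces a unique involution $\chi\in O(L)$ restricting to $\chi_-$ on $\Lambda_{1,6}$ with $L^\chi$ of hyperbolic signature. By Theorem \ref{realK3} there is a marked $K3$ surface $(X,\phi)$ whose period equals $[x]$ and such that $\chi_X=\phi^{-1}\circ\chi\circ\phi$ is an antiholomorphic involution. Crucially, $\chi$ preserves both eigenlattices $L_\pm$, hence commutes with $\tau$, so $\chi_X$ commutes with $\tau_X$ and descends to a real structure on the del Pezzo surface $Y=X/\tau_X$. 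The anticanonical morphism $|K_Y|\colon Y\to\mathbb{P}^2$ is then defined over $\mathbb{R}$ and exhibits $Y$ as a double cover branched along a real plane quartic $C$; the condition $[x]\in\mathbb{B}^\circ$, i.e.\ avoidance of all root mirrors $\mathcal{H}_n\cup\mathcal{H}_h$, guarantees that $C$ is smooth and nonhyperelliptic. By construction $\Per^\mathbb{R}(C)$ is the prescribed class.

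The main obstacle is not the existence of a complex quartic with the prescribed period, which is already supplied by Theorem \ref{Kondomain}, but the verification that the lift $\chi$ of $\chi_-$ from Lemma \ref{extend} is compatible with $\tau$ in the correct way to produce a real structure descending all the way to $\mathbb{P}^2$, together with upgrading complex isomorphisms to real isomorphisms in the injectivity step. Once bijectivity is established, the real analytic orbifold structures on both sides are the ones inherited from $\mathcal{Q}$ and $P\Gamma\backslash\mathbb{B}^\circ$ via Kondo's isomorphism; the map $\Per^\mathbb{R}$ is then the restriction of $\Per$ to the real locus and is automatically a morphism of real analytic orbifolds, with its inverse constructed by the surjectivity step varying real analytically in $[x]$.
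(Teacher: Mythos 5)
Your proposal is correct and takes essentially the same approach as the paper: the paper's proof consists exactly of your surjectivity step, constructing the inverse map by lifting $\chi_-$ to $\chi \in O(L)$ via Lemma \ref{extend}, realizing $\chi$ as a real form of the marked $K3$ surface via Theorem \ref{realK3}, and descending through $\tau_X$ to the del Pezzo surface $Y$ and thence to a smooth real quartic $C$. Your additional paragraphs on well-definedness (via Lemma \ref{realisom}) and on upgrading complex to real isomorphisms by the strong Torelli theorem merely spell out what the paper leaves implicit in declaring this construction an inverse to $\Per^\mathbb{R}$.
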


\begin{proof}
We construct an inverse to the real period map. Let $z \in \mathbb{B}^\circ$ be such that $\chi_-(z)=z$ for a certain antiunitary involution of $\Lambda_{1,6}$. From the proof of Theorem \ref{Kondomain} we see that there is a marked $K3$ surface $X$ that corresponds to $z$. According to Lemma \ref{extend} the involution $\chi_-$ lifts to an involution $\chi \in O(L)$ such that for its restriction $\chi_+$ to $L_+$ the fixed point lattice $L_+^{\chi_+}$ is negative definite. Since $\Lambda_{1,6}^{\chi_-}$ is of hyperbolic signature the lattice $L^\chi$ is also of hyperbolic signature. According to Theorem \ref{realK3} this implies that the marked $K3$ surface $X$ is real. Its real form $\chi_X$ commutes with $\tau_X$ so that it induces a real form on $\chi_Y$ on the del Pezzo surface $Y=X/\left< \tau_X \right>$. The anticanonical system $|-K_Y|:Y\rightarrow \mathbb{P}^2$ is the double cover of $\mathbb{P}^2$ ramified over a smooth real plane quartic curve $C$. The inverse of the real period map associates to the $P\Gamma^{\chi_-}$ orbit of $z\in (\mathbb{B}^{\chi_-})^\circ$ the real isomorphism class of the real quartic curve $C$.   \end{proof}

\subsection{The six components of $\mathcal{Q}^\mathbb{R}$}
\label{sixcomponents}
In this section we complete our description of the real period map $\Per^\mathbb{R}$ by connecting the six connected components of the moduli space $\mathcal{Q}^\mathbb{R}$ of smooth real plane quartic curves to the six projective classes of antiunitary involutions of the Gaussian lattice $\Lambda_{1,6}$ from Theorem \ref{sixinvolutions}. We first prove that these six antiunitary involutions are in fact all of them.

\begin{prop}\label{allsixinvolutions}There are six projective classes of antiunitary involutions of the Gaussian lattice $\Lambda_{1,6}$ up to conjugation by $P\Gamma$. \end{prop}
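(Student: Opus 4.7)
The plan is to combine Theorem \ref{sixinvolutions}, which supplies six pairwise non-$P\Gamma$-conjugate projective classes $[\chi_1],\ldots,[\chi_6]$, with the real period map of Theorem \ref{realK3period}. This already gives the lower bound of six classes, so the proposition amounts to ruling out a seventh class, and the strategy will be to extract an upper bound of six from the classical fact that $\mathcal{Q}^\mathbb{R}$ has six connected components.

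First I would recall the orbifold isomorphism from Theorem \ref{realK3period},
\[ \Per^\mathbb{R}: \mathcal{Q}^\mathbb{R} \longrightarrow \coprod_{[\chi_-]} P\Gamma^{\chi_-} \big\backslash \left( \mathbb{B}^{\chi_-} \right)^\circ, \]
indexed by all $P\Gamma$-conjugacy classes of antiunitary involutions of $\Lambda_{1,6}$. Next I would verify that each summand on the right is nonempty. The real ball $\mathbb{B}^{\chi_-}$ has real dimension six, whereas every intersection $\mathbb{B}^{\chi_-}\cap H_r$ with a root mirror has positive real codimension by the codimension analysis at the end of Section \ref{Gaussian}. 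Since the collection of root mirrors is locally finite, $(\mathbb{B}^{\chi_-})^\circ$ is open and dense in $\mathbb{B}^{\chi_-}$, and its image in the orbifold quotient is automatically nonempty.

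The conclusion now follows by counting connected components. By the classical classification recalled in Figure \ref{6concomp}, $\mathcal{Q}^\mathbb{R}$ has exactly six connected components, and the isomorphism $\Per^\mathbb{R}$ transports this count to the right-hand side. Since each summand is a nonempty real analytic orbifold, it contributes at least one component to the disjoint union, so the number of indexing conjugacy classes is at most six. Combined with Theorem \ref{sixinvolutions}, we conclude that there are exactly six $P\Gamma$-conjugacy classes and that they are represented by $[\chi_1],\ldots,[\chi_6]$.

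The main conceptual point to check carefully is that the disjoint union in Theorem \ref{realK3period} really is indexed by \emph{every} $P\Gamma$-conjugacy class rather than only those realised by a real quartic: without this, the argument only controls classes that actually occur. This surjectivity is built into the inverse of the real period map, which for any antiunitary involution $\chi_-$ uses Lemma \ref{extend} to lift to an involution $\chi\in O(L)$ with hyperbolic fixed-point lattice, invokes Theorem \ref{realK3} to produce a real $K3$ surface, and descends via the quotient by $\tau_X$ to a real del Pezzo double cover branched along a smooth real quartic. Once this is in place, the component count converts existence of six classes into the matching upper bound.
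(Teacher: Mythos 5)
Your proposal is correct and follows essentially the same route as the paper: the paper's proof likewise combines the six classes exhibited in Theorem \ref{sixinvolutions} with the surjectivity of the real period map of Theorem \ref{realK3period} onto a disjoint union indexed by all $P\Gamma$-conjugacy classes, using the six connected components of $\mathcal{Q}^\mathbb{R}$ to cap the count at six. Your additional verifications --- that each summand is nonempty because the mirrors have positive codimension, and that the indexing really runs over all classes via Lemma \ref{extend} in the inverse period map --- are exactly the points the paper's terser proof leaves implicit, so they strengthen rather than change the argument.
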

\begin{proof}
Since $\mathcal{Q}^\mathbb{R}$ consists of six connected components and the real period map $\Per^\mathbb{R}$ is surjective the number of projective classes is at most six. In Theorem \ref{sixinvolutions} we found six projective classes of antiunitary involutions up to conjugation by $P\Gamma$ so these six are all of them.

\end{proof}

The following corollary follows from the proof of Theorem \ref{realK3period}.
\begin{cor}Suppose $z\in \mathbb{B}^\circ$ is a real period point so that it is fixed by an antiunitary involution $\chi \in U(\Lambda_{1,6})^a$. By the real period map we associate to $z \in \mathbb{B}^\circ$ a real del Pezzo surface $Y$ of degree two together with a marking
\[ H^2(Y,\mathbb{Z})\rightarrow L_+\left(\tfrac{1}{2}\right) \]
such that the induced involution of the real form of $Y$ on $L_+(\frac{1}{2})$ is given by $\chi_+$.
\end{cor}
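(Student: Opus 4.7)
The plan is to observe that the statement is essentially a refinement of the construction of the inverse of the real period map in the proof of Theorem~\ref{realK3period}, making the induced involution on the del Pezzo marking explicit. Nearly all of the work has already been done, so the proof will be organizational: assemble the pieces and invoke naturality of pullback.

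First, by Theorem~\ref{Kondomain} we associate to $z$ a marked $K3$ surface $(X,\phi)$ whose period point is $z$. Writing $\chi_-$ for the restriction of $\chi$ to $L_-\cong \Lambda_{1,6}$, Lemma~\ref{extend} provides a unique involution $\tilde{\chi}\in O(L)$ extending $\chi_-$ so that $L^{\tilde{\chi}}$ is of hyperbolic signature, and its restriction to $L_+$ is precisely the $\chi_+$ of the statement. Theorem~\ref{realK3} then guarantees that $\chi_X=\phi^{-1}\circ\tilde{\chi}\circ\phi$ is a real form of $X$.

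Second, since $\tilde{\chi}$ commutes with $\tau$ on $L$ (both preserve $L_+$ and $L_-$), the antiholomorphic involution $\chi_X$ commutes with $\tau_X$, hence descends to an antiholomorphic involution $\chi_Y$ on $Y=X/\langle\tau_X\rangle$. The anticanonical map $|{-K_Y}|\colon Y\to \mathbb{P}^2$ is natural, so it is $\chi_Y$-equivariant, exhibiting $Y$ as a real del Pezzo surface of degree two (the real structure on $\mathbb{P}^2$ being complex conjugation by the construction of $X$).

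Third, I would construct the marking. The pullback $\pi_1^{\ast}\colon H^2(Y,\mathbb{Z})\to H^2(X,\mathbb{Z})$ is a primitive embedding whose image is $\phi^{-1}(L_+)$, and because $\pi_1$ is a degree-two covering one has $(\pi_1^{\ast}\alpha,\pi_1^{\ast}\beta)_X=2(\alpha,\beta)_Y$. Consequently $\phi\circ\pi_1^{\ast}$ defines an isometry $H^2(Y,\mathbb{Z})\to L_+\bigl(\tfrac{1}{2}\bigr)$, which is the marking of $Y$ required by the statement. Functoriality of pullback gives $\chi_Y^{\ast}\circ\pi_1^{\ast}=\pi_1^{\ast}\circ\chi_X^{\ast}$, so under this marking the action induced by $\chi_Y$ corresponds to $\tilde{\chi}|_{L_+}=\chi_+$.

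The only genuinely substantive point is the scaling factor in the third step: verifying that $\pi_1^{\ast}$ scales the intersection form by $2$, which is standard for degree-two covers. Everything else is unpacking the commuting diagram
\[
\begin{tikzcd}
H^2(Y,\mathbb{Z}) \arrow{r}{\pi_1^{\ast}} \arrow{d}{\chi_Y^{\ast}} & H^2(X,\mathbb{Z}) \arrow{r}{\phi} \arrow{d}{\chi_X^{\ast}} & L \arrow{d}{\tilde{\chi}} \\
H^2(Y,\mathbb{Z}) \arrow{r}{\pi_1^{\ast}} & H^2(X,\mathbb{Z}) \arrow{r}{\phi} & L
\end{tikzcd}
\]
and noting that the composition along the top row lands in $L_+$ and matches the $\chi_+$ constructed in Lemma~\ref{extend}.
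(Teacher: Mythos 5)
Your proposal is correct and follows essentially the same route as the paper, which derives this corollary directly from the proof of Theorem~\ref{realK3period} (lift $\chi_-$ to $O(L)$ by Lemma~\ref{extend}, realize it as a real form via Theorem~\ref{realK3}, descend to $Y=X/\langle\tau_X\rangle$), with the marking and the factor-two scaling of $\pi_1^{\ast}$ already recorded in the paper right after Kondo's construction, where $\pi_1^{\ast}:\Pic Y\rightarrow \Pic X$ is identified with $\phi^{-1}(L_+)$. One trivial slip: the inline equivariance should read $\chi_X^{\ast}\circ\pi_1^{\ast}=\pi_1^{\ast}\circ\chi_Y^{\ast}$ (which is exactly what your commuting diagram encodes), not $\chi_Y^{\ast}\circ\pi_1^{\ast}=\pi_1^{\ast}\circ\chi_X^{\ast}$, which does not typecheck since $\pi_1^{\ast}$ lands in $H^2(X,\mathbb{Z})$.
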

 
We review some results of \cite{Wall} on real del Pezzo surfaces of degree two. Other references on this subject are Koll\'ar \cite{Kollar} and Russo \cite{Russo}. A real del Pezzo surface $Y$ of degree two is the double cover of the projective plane $\mathbb{P}^2$ ramified over a smooth real plane quartic curve $C\subset \mathbb{P}^2$ so that:
\[ Y = \{ w^2 = f(x,y,z) \}. \]
We choose the sign of $f$ so that $f>0$ on the orientable interior part of $C(\mathbb{R})$. By using the deck transformation $\rho_Y$ of the cover we see that
\begin{equation}
\begin{aligned}
\chi_Y^+: \left[ w:x:y:z \right] & \mapsto [\bar{w}:\bar{x}:\bar{y}:\bar{z}] \\
\chi_Y^-: [w:x:y:z] & \mapsto [-\bar{w}:\bar{x}:\bar{y}:\bar{z}] \\
\end{aligned}
\end{equation}\label{realstructuredelpezzo}
are the two real forms of $Y$. These real forms satisfy $\chi_Y^-=\rho_Y \circ \chi_Y^+$ and we denote the real point sets of $\chi_Y^+$ and $\chi_Y^-$ by $Y^+(\mathbb{R})$ and $Y^-(\mathbb{R})$ respectively. Note that $Y^+(\mathbb{R})$ is an orientable surface while $Y^-(\mathbb{R})$ is nonorientable. Suppose that $H^2(Y,\mathbb{Z})\rightarrow L_+(\frac{1}{2})$ is a marking of $Y$. The deck transformation $\rho_Y$ induces the involution:
\[ \rho = (-1,1) \in W(E_7)\times \mathbb{Z}/2\mathbb{Z}. \]
in $O(L_+(\frac{1}{2}))$. This implies that the two real forms $\chi_Y^\pm$ form a pair
\[ (\chi_Y^+,\chi_Y^-) \longleftrightarrow (\pm u , -1) \in W(E_7)\times \mathbb{Z}/2\mathbb{Z}. \]
In \cite{Wall} Wall determines the correspondence between the conjugation classes of the $u\in W(E_7)$ and the topological type of $Y(\mathbb{R})$. The results are shown in Table \ref{realdelpezzotable}. We use the notation $kX$ for the disjoint union and $\# kX$ for the connected sum of $k$ copies of a real surface $X$. From this table we see that except for the classes of $D_4$ and $A_1^{3\prime}$ the conjugation class of $u\in W(E_7)$ determines the topological type of the real plane quartic curve $C(\mathbb{R})$.

\begin{table}

\[
\begin{array}{ccll}
\toprule
j& C(\mathbb{R}) & u\in W(E_7) & Y(\mathbb{R}) \\
\midrule
\multirow{2}{*}{1} & \multirow{2}{*}{\begin{tikzpicture}\draw (45:10pt) circle [radius=5pt];\draw (135:10pt) circle [radius=5pt];\draw (-135:10pt) circle [radius=5pt];\draw (-45:10pt) circle [radius=5pt];\end{tikzpicture}} & 1 & \# 8\mathbb{P}^2(\mathbb{R}) \\
& & E_7 & 4S_0\\
\\
\multirow{2}{*}{2} & \multirow{2}{*}{\begin{tikzpicture}\draw (0:8pt) circle [radius=5pt];\draw (120:8pt) circle [radius=5pt];\draw (240:8pt) circle [radius=5pt];\end{tikzpicture}} & A_1 & \# 6\mathbb{P}^2(\mathbb{R}) \\
& & D_6 & 3S_0\\
\\
\multirow{2}{*}{3} & \multirow{2}{*}{\begin{tikzpicture}\draw (0:7pt) circle [radius=5pt]; \draw (180:7pt) circle [radius=5pt];\end{tikzpicture}} & A_1^2 & \# 4\mathbb{P}^2(\mathbb{R}) \\
& & D_4A_1 & 2S_0\\
\\
\multirow{2}{*}{4} & \multirow{2}{*}{\begin{tikzpicture}\draw (0:0) circle [radius=5pt];\end{tikzpicture}} & A_1^3 & \# 2\mathbb{P}^2(\mathbb{R}) \\
& & A_1^4 & S_0\\
\\
\multirow{2}{*}{5} & \multirow{2}{*}{ \begin{tikzpicture}[baseline = -3pt]\draw (0,0) circle [radius=8pt];\draw (0,0) circle [radius=4pt];\end{tikzpicture}} & D_4 & S_0 \sqcup \# 2\mathbb{P}^2(\mathbb{R}) \\
& & A_1^{3\prime} & S_1 \\
\\
 \multirow{2}{*}{6} & \multirow{2}{*}{ $\emptyset$ } & D_4 &  2\mathbb{P}^2(\mathbb{R}) \\
&  & A_1^{3\prime} & \emptyset \\
\bottomrule
\end{array}
\]
\caption{The real topological types of real del Pezzo surfaces of degree two and their corresponding involutions in the Weyl group $W(E_7)$.}
\label{realdelpezzotable}

\end{table}

\begin{thm}
The correspondence between the six projective classes of antiunitary involutions of the lattice $\Lambda_{1,6}$ up to conjugation by $P\Gamma$ and the real components of $\mathcal{Q}^\mathbb{R}$ is given by
\[ \mathcal{Q}_j^\mathbb{R} \longleftrightarrow \chi_j  \qquad j=1,\ldots,6. \]
The index $j$ on the left is given by Table \ref{realdelpezzotable} and the index $j$ on the right by Table \ref{12conjclass}.
\end{thm}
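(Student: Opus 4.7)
The plan is to combine Theorem \ref{realK3period} (the real period map is an isomorphism onto the disjoint union over $P\Gamma$-conjugacy classes of antiunitary involutions) with Proposition \ref{allsixinvolutions} (there are precisely six such classes, namely $[\chi_1],\dots,[\chi_6]$) to obtain a canonical bijection between $\{\mathcal{Q}_j^\mathbb{R}\}_{j=1}^6$ and $\{[\chi_j]\}_{j=1}^6$; the task then reduces to identifying which component maps to which projective class. For each $j$ I would pick a representative smooth real quartic $C_j \in \mathcal{Q}_j^\mathbb{R}$, attach to it the $K3$ surface $X_j$ with the real form $\chi_{X_j}$ coming from complex conjugation on $\mathbb{P}^3$, and determine the $P\Gamma$-class of the associated antiunitary involution $[\chi_-]$ of $\Lambda_{1,6}$ using the invariants built up in the preceding sections.

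The main invariant for the generic cases is the conjugacy class of $u \in W(E_7)$ controlling the del Pezzo involution $\chi_+ = (u,-1) \in W(E_7)\times \mathbb{Z}/2\mathbb{Z}$ on $L_+$, as in Subsection \ref{Periodsofreal}. By Wall's classification (Table \ref{realdelpezzotable}), the $W(E_7)$-conjugacy class of $u$ is determined by the topological type of $Y_j^+(\mathbb{R})$, which in turn is pinned down by the topological type of $C_j(\mathbb{R})$ together with the sign convention on $f$ fixed at the start of Subsection \ref{sixcomponents}. Simultaneously, by Lemma \ref{extend} and Proposition \ref{compos}, the reduction $\bar u$ of $\chi_-$ modulo $1+i$ agrees with the image of $\chi_+$ in $W(E_7)^+ \subset O(A_{L_+})$, so the unordered pair $\{u,-u\} \subset W(E_7)$ is an invariant of the $P\Gamma$-class $[\chi_-]$.

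For $j=1,2,3,4$ the unordered pairs $\{u,-u\}$ extracted from Table \ref{realdelpezzotable} are $\{1,E_7\}$, $\{A_1,D_6\}$, $\{A_1^2,D_4A_1\}$, $\{A_1^3,A_1^4\}$ respectively. Comparing with Table \ref{12conjclass}, each of these pairs occurs for exactly one projective class among $[\chi_1],\dots,[\chi_6]$, giving the matching $\mathcal{Q}_j^\mathbb{R}\leftrightarrow [\chi_j]$ for $j=1,\dots,4$.

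The hard part is $j=5$ and $j=6$: both give the unordered pair $\{D_4,A_1^{3\prime}\}$, so the $W(E_7)$-invariant no longer separates them. To split these two cases I would invoke Lemma \ref{ClassCriterion}: the isomorphism class of the unordered pair $(\Lambda_{1,6}^{\chi_-},\Lambda_{1,6}^{i\chi_-})$ is a $P\Gamma$-invariant, and Table \ref{12lattices} shows that $\Lambda_{1,6}^{i\chi_5}$ and $\Lambda_{1,6}^{i\chi_6}$ become respectively odd and even after scaling by $\tfrac12$, so the pair invariant really does distinguish $[\chi_5]$ from $[\chi_6]$. To compute this invariant for an explicit representative, I would lift $\chi_-$ to the $K3$-involution $\chi\in O(L)$ via Lemma \ref{extend} and extract the triple $(r,a,\delta)$ of $L^\chi$ from the topology of $X_j(\mathbb{R})$ via Nikulin's Theorem \ref{NikulinRealInv}. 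For instance the Fermat-type quartic $\{x^4+y^4+z^4=0\}\in\mathcal{Q}_6^\mathbb{R}$ yields a real $K3$ with connected real locus whose lattice invariants match those of $\chi_6$, so that $\mathcal{Q}_5^\mathbb{R}\leftrightarrow[\chi_5]$ is forced by elimination.
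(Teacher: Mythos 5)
For $j=1,\dots,4$ your argument coincides with the paper's: both reduce, via Theorem \ref{realK3period} and Proposition \ref{allsixinvolutions}, to matching the unordered pairs $\{u,-u\}\subset W(E_7)$ of Table \ref{realdelpezzotable} against Table \ref{12conjclass}. The gap is in your $j=5,6$ step, in three respects. First, you apply Nikulin's Theorem \ref{NikulinRealInv} backwards: it computes the topology of $X(\mathbb{R})$ from $(r,a,\delta)$, and this assignment is not invertible in general (in particular $\delta$ is invisible to the topology); the inversion is legitimate essentially only for the exceptional types, of which $X(\mathbb{R})=\emptyset \Leftrightarrow (r,a,\delta)=(10,10,0)$ is the relevant one. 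Second, and more seriously, the real structure you chose --- the one on the Fermat-type $K3$ with \emph{connected} real locus --- is precisely the one that cannot separate the two classes: the two antiholomorphic lifts of conjugation to $X$ induce $\chi_-$ and $i\chi_-$ on $\Lambda_{1,6}$, and by Table \ref{12lattices} the fixed lattices $\Lambda_{1,6}^{\chi_5}$ and $\Lambda_{1,6}^{\chi_6}$ are \emph{both} isomorphic to $(2)\oplus A_1^2\oplus D_4(2)$; the nonempty real form realizes this shared lattice, so its invariants ``matching those of $\chi_6$'' would equally match $\chi_5$. By Lemma \ref{ClassCriterion} the separating datum is the \emph{other} real form, the one with $X(\mathbb{R})=\emptyset$, since $\Lambda_{1,6}^{i\chi_5}$ and $\Lambda_{1,6}^{i\chi_6}$ differ (odd versus even after scaling by $\tfrac{1}{2}$). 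Third, even once $X(\mathbb{R})=\emptyset$ forces $L^\chi\cong U(2)\oplus E_8(2)$, you still owe an argument passing from the abstract isomorphism type of $L^\chi$ to the sublattice $\Lambda_{1,6}^{\chi_-}=L^\chi\cap L_-$, which the isomorphism type alone does not determine. A quick repair: $\Lambda_{1,6}^{\chi_-}$ is a sublattice of $L^\chi$, and $L^\chi(\tfrac{1}{2})\cong U\oplus E_8$ is even, so $\Lambda_{1,6}^{\chi_-}(\tfrac{1}{2})$ is even; among the four fixed lattices in Table \ref{12lattices} for $\chi_5,i\chi_5,\chi_6,i\chi_6$ only $U(2)\oplus A_1(2)\oplus D_4(2)=\Lambda_{1,6}^{i\chi_6}$ survives this parity test, forcing $\chi_-\sim i\chi_6$ and hence $\mathcal{Q}_6^\mathbb{R}\leftrightarrow[\chi_6]$, with $\mathcal{Q}_5^\mathbb{R}\leftrightarrow[\chi_5]$ by elimination.

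It is worth noting that the paper runs this step in the opposite direction and thereby avoids all three issues: it never invokes a geometric representative, but writes down the explicit lattice involution $\chi$ of Equation \ref{emptyinv} on $L=U^3\oplus E_8^2$, reads off $L^\chi\cong U(2)\oplus E_8(2)$ with invariants $(10,10,0)$, applies Theorem \ref{NikulinRealInv} in the forward (valid) direction to conclude $X(\mathbb{R})=\emptyset$, and computes the restrictions $L_-^{\chi_-}\cong U(2)\oplus D_4(2)\oplus A_1(2)$ and $L_+^{\chi_+}\cong A_1(2)^3$ concretely via the explicit embedding of Lemma \ref{explicitembedding}, matching Table \ref{12lattices} to identify $\chi_-\sim i\chi_6$. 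Since $C(\mathbb{R})$ lies in the ramification locus it embeds in $X(\mathbb{R})$, so emptiness of $X(\mathbb{R})$ forces $C(\mathbb{R})=\emptyset$. Your geometry-to-lattice route can be salvaged, but only after replacing the connected real form by the empty one and supplying the parity (or explicit-restriction) bridge described above.
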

\begin{proof}
For $j=1,2,3,4$ the statement follows by comparing Table \ref{realdelpezzotable} and Table \ref{12conjclass}. Unfortunately this does not work for the projective classes antiunitary involutions $\chi_5$ and $\chi_6$ since both correspond to the involutions $D_4\in W(E_7)$. To distinguish these two we will prove that the antiunitary involution $i\chi_6$ extends to an involution of the $K3$ lattice whose real $K3$ surface $X$ has no real points. This implies that the projective class of $\chi_6$ corresponding to the component $\mathcal{Q}^\mathbb{R}_6$ of smooth real quartic curves with no real points. 

For this let $L=U^3 \oplus E_8^2$ be the $K3$ lattice and consider the involution:
\begin{equation}\label{emptyinv}
\chi = -I_2 \oplus \begin{pmatrix}0&I_2\\I_2&0\end{pmatrix} \oplus \begin{pmatrix}0&I_8\\I_8&0\end{pmatrix} \in O(L).
\end{equation}
It is clear from the expression for $\chi$ that the fixed point lattice $L^\chi$ is isomorphic to $U(2)\oplus E_8(2)$. The invariants $(r,a,\delta)$ of this lattice are given by $(10,10,0)$ so that $X(\mathbb{R}) = \emptyset$ according to Theorem \ref{NikulinRealInv}. Using the explicit embedding of $L_+$ and $L_-$ into the $K3$ lattice $L$ from Lemma \ref{explicitembedding} it is easily seen that: 
\begin{equation}L_-^{\chi_-} \cong U(2)\oplus D(4) \oplus A_1(2) \quad , \quad L_+^{\chi_+} \cong A_1(2)^3. \end{equation}
By consulting Table \ref{12lattices} we now deduce that $\chi$ is conjugate to $i\chi_6$ in $P\Gamma$.

\end{proof}

\subsection{The geometry of maximal quartics}

We now study the component $\mathcal{Q}_1^\mathbb{R} \cong P\Gamma^{\chi_1} \backslash \mathbb{B}_6^{\chi_1}$ that corresponds to $M$-quartics in more detail. An $M$-quartic is a smooth real plane quartic curve $C$ such that its set of real points $C(\mathbb{R})$ consists of four ovals. Much of the geometry of such quartics is encoded by a hyperbolic polytope $C_6 \subset \mathbb{B}_6^{\chi_1}$. 
\begin{thm}The group $P\Gamma^{\chi_1}$ is isomorphic to the semidirect product
\[ W(C_6) \rtimes \Aut(C_6) \]
where $C_6\subset \mathbb{B}_6^{\chi_1}$ is the hyperbolic Coxeter polytope whose Coxeter diagram is shown in Figure \ref{Coxeter6}. Its automorphism group $\Aut(C_6)$ is isomorphic to the symmetric group $S_4$.
\end{thm}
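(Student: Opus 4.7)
The plan is to determine $P\Gamma^{\chi_1}$ by running (a modified form of) Vinberg's algorithm on the hyperbolic lattice $\Lambda_{1,6}^{\chi_1} \cong (2) \oplus A_1^6$. First I would observe that by Lemma \ref{ClassCriterion} there cannot exist an element of type II in $P\Gamma^{\chi_1}$: the fixed-point lattice $\Lambda_{1,6}^{\chi_1} \cong (2) \oplus A_1^6$ has discriminant $2^7$ while $\Lambda_{1,6}^{i\chi_1} \cong (2) \oplus A_1^5 \oplus A_1(2)$ has discriminant $2^8$ (see Table \ref{12lattices}), so these lattices are non-isomorphic and no $g \in \Gamma$ can realize $g \chi_1 g^{-1} = i\chi_1$. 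Consequently $P\Gamma^{\chi_1} = P\Gamma^{\chi_1}_I$ embeds into $PO(\Lambda_{1,6}^{\chi_1})$ as the subgroup of elements that lift to $U(\Lambda_{1,6})$, cut out by the conditions of Equation \ref{realcondition}.

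Next I would apply Vinberg's algorithm to the lattice $(2) \oplus A_1^6$ with the reality constraint of Equation \ref{realrootcondition}: only roots $r$ satisfying $2r/h(r,r) \in \Lambda_{1,6}^\vee$ (when viewed back in $\Lambda_{1,6}$) are admitted. Choosing the controlling vector $p = e_0$ with $(e_0,e_0) = 2$, the height-0 roots form a root subsystem in the negative definite $A_1^6$ summand; selecting a set of simple roots here and then successively adding accepted roots of increasing height (checking non-negative inner product against all previously accepted simple roots) produces the list of simple roots of $W(C_6)$. The expected termination is detected via Vinberg's finite-volume criterion: every elliptic subdiagram of rank $5$ extends in exactly two ways to an elliptic subdiagram of rank $6$ or to a parabolic subdiagram of rank $5$. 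Matching the resulting Gram matrix against Figure \ref{Coxeter6} (thirteen nodes, with the decorated edges given by the conventions of Figure \ref{conv}) identifies the fundamental chamber with $C_6$.

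Having obtained $C_6$, the general decomposition $O(L)^+ = W(L) \rtimes S(C)$ recalled in Section \ref{Vinberg} yields
\[ P\Gamma^{\chi_1} \;=\; W(C_6) \rtimes S(C_6), \]
where $S(C_6)$ is the group of isometries of $\Lambda_{1,6}^{\chi_1}$ that stabilize $C_6$ and lift to $U(\Lambda_{1,6})$. I would then identify $S(C_6)$ with $\Aut(C_6)$ by showing two things: (i) every combinatorial automorphism of the Coxeter diagram of $C_6$ is realized by a lattice isometry (this is automatic since the simple roots all have the same norm $-2$ or $-4$, so permutations preserving the diagram decorations preserve the Gram matrix); and (ii) each such isometry satisfies the reality condition of Equation \ref{realcondition}, so it extends to an element of $P\Gamma$. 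A direct inspection of Figure \ref{Coxeter6} exhibits a visible $S_4$-symmetry (permuting the three inner nodes $x_1,x_2,x_3$ together with a compatible action on the outer decagon of nodes yields the full $S_4$); I would then verify it is the \emph{full} diagram automorphism group by checking that no additional symmetry preserves the edge decorations.

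The main obstacle is the Vinberg enumeration itself: the reality constraint prunes the list of roots nontrivially, and one must carefully check that the thirteen simple roots suffice, i.e.\ that Vinberg's finite-volume test is met. A secondary obstacle is confirming that $\Aut(C_6) = S_4$ rather than a larger group — this requires inspecting the norms of the simple roots (split into the three decorated classes of Figure \ref{conv}) to ensure no symmetry of the underlying graph that violates these decorations is overlooked. Once both are in hand, the semidirect product decomposition and the isomorphism $\Aut(C_6) \cong S_4$ follow immediately.
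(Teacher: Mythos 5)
Your proposal follows essentially the same route as the paper: rule out type~II elements by comparing $\Lambda_{1,6}^{\chi_1}$ with $\Lambda_{1,6}^{i\chi_1}$ via Table~\ref{12lattices}, run Vinberg's algorithm on $(2)\oplus A_1^6$ with controlling vector $e_0$ subject to the reality condition of Equation~\ref{realrootcondition} (which the paper simplifies to the criterion that norm $-2$ roots are automatic and norm $-4$ roots need $(1+i)\mid z_i$), and then invoke the decomposition into $W(C_6)\rtimes \Aut(C_6)$. The only cosmetic difference is in identifying $\Aut(C_6)\cong S_4$: the paper observes that the four norm $-4$ nodes $r_1,r_3,r_5,r_{13}$ form a tetrahedron whose symmetries all extend to the diagram, and verifies realizability in $P\Gamma^{\chi_1}$ by exhibiting explicit generators $s,t$ as products of lattice reflections (Equation~\ref{S4gens}) checked against Equation~\ref{realcondition}, whereas you argue abstractly that decoration-preserving diagram automorphisms lift to lattice isometries --- both are sound, though note that the $S_4$ acts on the three grey nodes only through the quotient $S_4\to S_3$, so your parenthetical description of the symmetry should be anchored on the four-node tetrahedron as in the paper.
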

\begin{proof}
Recall that an element $ [g]\in P\Gamma^\chi$ is of type $II$ if and only if there is a $g\in [g]$ such that $g\Lambda_{1,6}^\chi = \Lambda_{1,6}^{i\chi}$. We see from Table \ref{12lattices} that the lattice $\Lambda_{1,6}^{\chi}$ is not isomorphic to the lattice $\Lambda_{1,6}^{i\chi}$ so that the group $P\Gamma^\chi$ does not contain elements of type $II$. Therefore the group $P\Gamma^{\chi_1}$ consists of all element of $PO(\Lambda^{
\chi_1}_{6,1})$ that are induced from $U(\Lambda_{1,6})$. The lattice $\Lambda_{1,6}^{\chi_1}$ is isomorphic to $(2) \oplus A_1^6$. A basis $\{e_0,\ldots,e_6\}$ in $\Lambda_{1,6}$ is given by the columns of the matrix:
\[
B_1 = \begin{pmatrix}0&1+i&0&0&0&0&0\\0&1&1&0&0&0&0\\0&0&0&1+i&0&0&0\\0&0&0&1&1&0&0\\0&0&0&0&0&1+i&0\\0&0&0&0&0&1&1\\1&0&0&0&0&0&0\end{pmatrix}.
\] 
It is a reflective lattice and the group $PO(\Lambda_{1,6}^{\chi_1})$ is a Coxeter group whose Coxeter diagram can be found in Figure \ref{E7diagrams}. A reflection $s_{r}\in PO(\Lambda_{1,6}^{\chi_1})$ is induced from $U(\Lambda_{1,6})$ if and only if the root $r$ satisfies Equation \ref{realrootcondition}. Note that a vector $r=(z_1,\ldots,z_7)\in \Lambda_{1,6}\otimes_\mathcal{G} \mathbb{Q}$ is contained in $\Lambda_{1,6}^\vee$ if and only if $z_i \in \frac{1}{1+i}\mathcal{G}$ for $i=1,\ldots,6$ and $z_7 \in \frac{1}{2}\mathcal{G}$ so that we can rewrite this equation as:
\begin{align*}
  \frac{2(1+i)z_i}{h(r,r)}\in \mathcal{G} \quad \text{for } i=1,\ldots,6 \quad , \quad \frac{4z_7}{h(r,r)}\in \mathcal{G}.
\end{align*}
These equations are automatically satisfied if $h(r,r)=-2$ and if $h(r,r)=-4$ they are equivalent to: $(1+i)$ divides $z_i$ for $i=1,\ldots,6$. This can be checked from the matrix $B_1$. Now we run Vinberg's algorithm with this condition and the result is the hyperbolic Coxeter polytope $C_6$ shown in Figure \ref{Coxeter6}. The vertices $r_1,r_3,r_5$ and $r_{13}$ of norm $-4$ roots form a tetrahedron. Every symmetry of this tetrahedron extends to the whole Coxeter diagram. Consequently the symmetry group of the Coxeter diagram is the symmetry group of a tetrahedron which is isomorphic to $S_4$. Consider the two elements $s,t\in PO(\Lambda_{1,6}^{\chi_1})$ defined by
\begin{equation}\label{S4gens}
\begin{aligned}
s&=s_{e_4-e_6}\cdot s_{e_3-e_5} \cdot s_{e_1-e_3} \cdot s_{e_2-e_4} \\
t&=s_{e_0-e_1-e_3-e_4} \cdot s_{e_0-e_1-e_5-e_6}.
\end{aligned}
\end{equation}
The element $s$ has order three and corresponds to the rotation of the tetrahedron that fixes $r_{13}$ and cyclically permutes $(r_1r_5r_3)$. The element $t$ has order two and corresponds to the reflection of the tetrahedron that interchanges $r_1$ and $r_{13}$ and fixes $r_3$ and $r_5$. Together these transformations generate $S_4$. We can check that both are contained in $P\Gamma^{\chi_1}$ by using Equation \ref{realcondition}. 
\end{proof}
\begin{figure}
\centering

$\begin{array}{cccccccc}
\toprule
 & e_0 & e_1 & e_2 & e_3 & e_4 & e_5 & e_6 \\
 \midrule
 p & 1 & 0 & 0 & 0 & 0 & 0 & 0\\
 \text{height $0$} &&&&&&&\\
 r_1 & 0& 1& -1 & 0&0&0&0\\
 r_2 & 0&0&1&0&0&0&0\\
 r_3 & 0&0&0&1&-1&0&0\\
 r_4 & 0&0&0&0&1&0&0\\
 r_5 & 0&0&0&0&0&1&-1\\
 r_6 & 0&0&0&0&0&0&1\\
\text{height $2$} &&&&&&&\\
 r_7 & 1&-1&0&-1&0&0&0\\
 r_8 & 1&-1&0&0&0&-1&0\\
 r_9 & 1&0&0&-1&0&-1&0\\
 r_{10} & 1&-1&-1&0&0&0&0\\
 r_{11} & 1&0&0&-1&-1&0&0\\
 r_{12} & 1&0&0&0&0&-1&-1\\
\text{height $4$} &&&&&&&\\
r_{13} & 2&-1&-1&-1&-1&-1&-1\\
\bottomrule
\end{array}$

\begin{tikzpicture}[scale=1.2]
    \node[c,vert,label=left:$r_5$] (1) at ( 0,0) {};
    \node[c,label=above:$r_8$]             (2) at ( 6,0) {};    
    \node[c,vert,label=right:$r_1$] (3) at ( 8,0) {};
    \node[c,label=below right:$r_7$]            (4) at ( 5.5,-1.25) {};
    \node[c,vert,label=below:$r_3$] (5) at ( 4,-2) {};
    \node[c,label=below left:$r_9$]             (6) at ( 2,-1) {};    
    \node[c,vert,label=above:$r_{13}$] (7) at ( 4,4) {};
    \node[c,label=above left:$r_4$]             (8) at ( 4,2) {};    
    \node[c,label=above right:$r_2$]             (9) at ( 6.25,1.75) {};
    \node[c,label=above left:$r_6$]             (10) at (1.5,1.5) {};
    
    \node[c,fill=LightGray,label=above right:$r_{11}$] (x1) at (4.5,1.5) {};
    \node[c,fill=LightGray,label=below right:$r_{10}$] (x2) at (4.5,0.5) {};
    \node[c,fill=LightGray,label=below left:$r_{12}$] (x3) at (3,0.5) {};
    
    \draw [-,double distance=2pt] (1) -- (2) -- (3);
    
    \draw [-,ultra thick] (x1) -- (x2) -- (x3) -- (x1);
    \draw [-,ultra thick] (2) -- (x1) -- (8);
    \draw [-,ultra thick] (4)  --(x3) -- (10);
    \draw [-,draw=white,line width=4pt] (6) -- (x2) -- (9);
    
    \draw [-,ultra thick] (6) -- (x2) -- (9);

    \draw [-,draw=white,line width=5pt] (5) -- (8) -- (7);
    \draw [-,double distance=2pt] (5) -- (8) -- (7)  ;
    
    \draw [-,double distance=2pt] (5) -- (4) -- (3) ;
    \draw [-,double distance=2pt] (5) -- (6) -- (1);
    \draw [-,double distance=2pt] (1) -- (10) -- (7);
    \draw [-,double distance=2pt] (3) -- (9) -- (7);
    
\end{tikzpicture}
\caption{The Coxeter diagram of the reflection part of the group $P\Gamma^1$}
\label{Coxeter6}
\end{figure}
We see from the Coxeter diagram of the polytope $C_6$ that there are three orbits of roots under the automorphism group $\Aut(C_6)\cong S_4$. The orbit of a root $r$ corresponding to a grey node of norm $-2$ satisfies $r^\perp \cong \Lambda_2^2 \oplus \Lambda_{1,1}$. According to Equation \ref{mirrortypes} the mirror of such a root is of hyperelliptic type. This means that the smooth points of such a mirror correspond to a smooth hyperelliptic genus three curves. The Coxeter diagram of the wall that corresponds to the hyperelliptic root $r_{11}$ is the subdiagram consisting of the nodes belonging to the roots
\[ \{r_1,r_2,r_3,r_5,r_6,r_7,r_9,r_{13}\}. \]
It is isomorphic to the Coxeter diagram on the right hand side of Figure \ref{CoxDiag}. This is also the case for the other two hyperelliptic roots so they correspond to the maximal real component of  real hyperelliptic genus three curves.  

The other two orbits of roots satisfy $r^\perp \cong \Lambda_2^2 \oplus \left( \begin{smallmatrix}-2&0\\0&2\end{smallmatrix} \right)$ so that their mirrors are of nodal type. For a white root of norm $-2$ the orthogonal complement $r^\perp$ in the lattice $\Lambda_{1,6}^{\chi_1}$ is isomorphic to $(2)\oplus A_1^5$. The smooth points of such a mirror correspond to quartic curves with a nodal singularity such that the tangents at the node are real. Locally such a node is described by the equation $x^2-y^2=0$. This happens when two ovals touch each other. Since there are four ovals this can happen in $\binom{4}{2}=6$ ways; hence there are six mirrors of this type.

For a nodal root of norm $-4$ the orthogonal complement is given by $r^\perp \cong (2)\oplus A_1^4 \oplus A_1(2)$ in $\Lambda_{1,6}^{\chi_1}$. The smooth points of such a mirror correspond to quartic curves with a nodal singularity such the tangents at the node are complex conjugate. Locally this is described by $x^2+y^2=0$. It happens when an oval shrinks to a point which can occur for each of the four ovals, and so there are four mirrors of this type. 


A point $[x] \in C_6$ that is invariant under the action of $\Aut(C_6)\cong S_4$ corresponds to an $M$-quartic whose automorphism group is isomorphic to $S_4$. These points are described by the following lemma.
\begin{lem}
A point $[x]\in C_6$  with $x=(x_0,\ldots,x_6) \in \Lambda_{1,6}^{\chi_1}\otimes_{\mathbb{Z}}\mathbb{R}$ is invariant under $\Aut(C_6)\cong S_4$ if and only if it lies on the hyperbolic line segment
\[ L= \{(-2b-a,b,a,b,a,b,a)  \ ; \ a,b,\in \mathbb{R}, \ b \leq a \leq 0 \}/\mathbb{R}_+ \subset C_6. \]
The line segment $L$ has fixed distances $d_1,d_2$ and $d_3$ to mirrors of type \begin{tikzpicture}\node[c] (1) at (0,0) {};\end{tikzpicture},\begin{tikzpicture}\node[c,fill=LightGray] (1) at (0,0) {};\end{tikzpicture} and \begin{tikzpicture}\node[c,vert] (1) at (0,0) {};\end{tikzpicture} respectively, and these distances satisfy 
\[ [\sinh^2 d_1:\sinh^2 d_2:\sinh^2 d_3]=[a^2:b^2:(a-b)^2/2].\]
\end{lem}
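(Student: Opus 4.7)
The plan is to compute the common fixed locus of the generators $s$ and $t$ of $\Aut(C_6) \cong S_4$ given in Equation \ref{S4gens}, using their explicit description as products of reflections, and then verify the wall inequalities and compute the distances via the hyperbolic distance formula of Equation \ref{hypdistance}.

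First I would analyze $s$. Each of its four factors is the reflection in a norm $-4$ root of the form $e_i - e_j$ with $i, j \geq 1$, and a direct calculation shows that such a reflection simply swaps $e_i \leftrightarrow e_j$ and fixes the remaining basis vectors. Composing the four transpositions reveals that $s$ cyclically permutes the three pairs $(e_1, e_2), (e_3, e_4), (e_5, e_6)$ and fixes $e_0$. Hence any real $s$-fixed vector has the form $(x_0, b, a, b, a, b, a)$ for some $a, b \in \mathbb{R}$, cutting the $s$-fixed locus down to a three-dimensional subspace.

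Next I would impose $t$-invariance on these vectors. Both factors of $t$ are reflections in norm $-4$ roots of the form $e_0 - e_1 - e_i - e_j$, and $e_2$ is visibly fixed by both factors; since an invariant vector must be scaled by a common factor $\mu$ and the $e_2$-coordinate is then $\mu a = a$, we get $\mu = 1$ whenever $a \neq 0$. Applying $t$ to a general $s$-fixed vector and demanding $t(x) = x$ then collapses to the single constraint $x_0 = -a - 2b$, which is precisely the parametrization of $L$. To verify $L \subset C_6$, I would check $(r_i, x) \geq 0$ for each of the thirteen simple roots from the Vinberg table in Figure \ref{Coxeter6}; under the $S_4$-action these roots fall into the three orbits indicated by the three node types, and $(r_i, x)$ on a point of $L$ depends only on the orbit. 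A short computation yields $-2a$ on the six white (real-tangent nodal) roots, $-2b$ on the three grey (hyperelliptic) roots, and $2(a - b)$ on the four vert (complex-tangent nodal) roots, so the non-negativity conditions precisely cut out $b \leq a \leq 0$.

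For the distances, I would substitute the three orbit values of $(r, x)$ into the formula $\sinh^2 d = -(r, x)^2 / ((r, r)(x, x))$, using $(r, r) = -2$ for white and grey roots and $(r, r) = -4$ for vert roots. This yields $\sinh^2 d_1 = 2a^2/(x,x)$, $\sinh^2 d_2 = 2b^2/(x,x)$, $\sinh^2 d_3 = (a - b)^2/(x, x)$, from which the ratio $[a^2 : b^2 : (a - b)^2 / 2]$ follows after clearing the common factor. Nowhere in this plan is a real conceptual obstacle; the only care required is in the bookkeeping of the composition $t = s_{e_0 - e_1 - e_3 - e_4} \circ s_{e_0 - e_1 - e_5 - e_6}$, which one can cross-check against the geometric description that $t$ swaps $r_1 \leftrightarrow r_{13}$ and fixes $r_3$ and $r_5$.
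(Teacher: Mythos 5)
Your proposal is correct and takes essentially the same route as the paper: compute the common fixed locus of the generators $s$ and $t$ of Equation \ref{S4gens} (the paper compresses this to ``a small computation''), identify the orbit-wise inner products of a point of $L$ with the thirteen simple roots of Figure \ref{Coxeter6}, and substitute into the distance formula of Equation \ref{hypdistance}. The only divergence is cosmetic: your values $(r_i,x)=-2a,\,-2b,\,2(a-b)$ are the literal inner products in $(2)\oplus A_1^6$, whereas the paper lists $-a,\,-b,\,a-b$ (a common factor of $2$ that cancels in the projective ratio), so both computations yield $[\sinh^2 d_1:\sinh^2 d_2:\sinh^2 d_3]=[a^2:b^2:(a-b)^2/2]$; your explicit verification of the wall inequalities and your handling of the projective scalar $\mu$ are in fact slightly more careful than the paper's own argument.
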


\begin{proof}The group $\Aut(C_6)$ is generated by the two elements $s$ and $t$ from Equation \ref{S4gens}. A small computation shows that a point $x \in \Lambda_{1,6}^{\chi_1} \otimes_\mathbb{Z} \mathbb{R}$ is invariant under these two generators if and only if it is of the form 
\[ x = (-2b-a,b,a,b,a,b,a).\]
The second statement of the Lemma follows from the formula for hyperbolic distance (Equation \ref{hypdistance}) and the equalities
\begin{align}
(x,r_i) = \begin{cases} -a & i=2,4,6,7,8,9 \\
-b & i=10,11,12 \\
a-b & i=1,3,5,13 .
\end{cases}
\end{align}
\end{proof}  
The line segment $L$ connects the vertex $L_0=(-2,1,0,1,0,1,0)\in C_6$ of type $A_1^6$  to the point $L_1=(-3,1,1,1,1,1,1)$. A consequence of the real period map of Theorem \ref{realK3period} is that there is a unique one-parameter family of smooth plane quartics with automorphism group $S_4$ that corresponds to the line segment $L\subset C_6$. This pencil was previously studied by W.L Edge \cite{Edge}. It is described by the following proposition.

\begin{prop}The one-parameter family of quartic curves $C_t$ by:
\[ C_t = \prod(\pm x \pm y +z) + t(x^4+y^4+z^4) \quad, \quad 0\leq t\leq 1. \]
corresponds to the line segment $L$ under the real period map.
\end{prop}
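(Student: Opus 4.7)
The plan is to use the real period map to match the $S_4$-invariant family $\{C_t\}_{0\le t\le 1}$ with the $S_4$-fixed line segment $L\subset C_6$, by first verifying $S_4$-invariance of the family, then analysing the two endpoint degenerations, and finally invoking the uniqueness asserted by Theorem \ref{realK3period} together with the previous lemma.

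First I would expand the product to obtain
\[
C_t = (1+t)(x^4+y^4+z^4) - 2(x^2y^2+y^2z^2+z^2x^2),
\]
which makes invariance under the standard $S_4$-action on $\mathbb{P}^2$ (coordinate permutations together with even sign changes) manifest. Next I would analyse the endpoints. At $t=0$ the quartic is a product of four real lines in general position, meeting pairwise in $\binom{4}{2}=6$ real nodes with real tangents; this is the limit in which all six pairs of ovals touch, which is precisely the geometry encoded by the vertex $L_0$ of type $A_1^6$ (the meeting point of the six white nodal walls of $C_6$). At $t=1$, a direct gradient computation shows that $C_1$ acquires isolated nodes with complex-conjugate tangents at the four $S_4$-orbit points $(\pm 1,\pm 1,1)$, the limit in which each of the four ovals shrinks to a point; this matches the vertex $L_1$ where the four norm-$(-4)$ mirrors of $C_6$ meet. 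For $0<t<1$ the curve $C_t$ is smooth and its real locus is a small deformation of the $t=0$ quadrilateral, hence consists of four ovals, so $C_t\in \mathcal{Q}_1^{\mathbb{R}}$.

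By Theorem \ref{realK3period} the real period map identifies $\mathcal{Q}_1^{\mathbb{R}}$ with $P\Gamma^{\chi_1}\backslash (\mathbb{B}_6^{\chi_1})^\circ$, and under this identification the locus of $S_4$-invariant smooth $M$-quartics corresponds to the fixed locus of $\Aut(C_6)\cong S_4$, which by the previous lemma is the interior of $L$. The image of the one-parameter family $t\mapsto C_t$ is a connected real-analytic $S_4$-fixed arc whose two ends approach $L_0$ and $L_1$; since $L$ is the unique closed arc in the ball quotient joining these two boundary points, the two must coincide. The main obstacle is the verification that no interior value $t\in(0,1)$ yields a singular curve: one can either compute the discriminant of $C_t$ as an explicit one-variable polynomial in $t$ and check that it has no real zero in $(0,1)$, or argue more conceptually that any further degeneration of $C_t$ would have to take place along an $\Aut(C_6)$-stable wall of $C_6$ crossing the interior of $L$, and the lemma's parametrisation of $L$ leaves no room for such a wall.
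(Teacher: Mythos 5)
Your proposal is correct and takes essentially the same route as the paper: verify the $S_4$-invariance of the family, identify the endpoint degenerations at $t=0$ (four real lines with six real nodes, matching the $A_1^6$ vertex $L_0$) and at $t=1$ (four isolated nodes with complex conjugate tangents, matching $L_1$), and conclude via the uniqueness of the $S_4$-fixed segment $L$ provided by the real period map. The paper's own proof is in fact terser --- it simply asserts that $C_t$ is an $M$-quartic for $0<t<1$ --- so your explicit smoothness verification (the discriminant computation, or equivalently a direct gradient calculation showing singular points force $t=0$ or $t=1$) only supplies detail the paper leaves implicit; note also that the paper's subsequent remark on the $28$ real bitangents gives yet another way to rule out complex singular points once the four ovals are in place.
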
 
\begin{proof}This family is invariant under permutations of the coordinates $(x,y,z)$ and the transformations: $(x,y,z)\mapsto (\pm x,\pm y,z)$. Together these generate a group $S_3\rtimes V_4 \cong S_4$. The curve $C_0$ is a degenerate quartic that consists of four lines and has six real nodes corresponding to the intersection points of the lines. For $0<t<1$ the curve $C_t$ is an $M$-quartic. The quartic $C_1$ has no real points except for four isolated nodes.  
\end{proof}

\begin{rmk}
A plane quartic curve with a single node has $22$ bitangents, because the $6$ bitangents through the node all have multiplicity $2$. If a real plane quartic has $4$ smooth ovals in the real projective plane, then this curve has $24$ real bitangents intersecting the quartic in $2$ real points on $2$ distinct ovals (indeed, one has $4$ such bitangents for each pair of ovals). Each nonconvex oval gives a real bitangent intersecting that oval in $2$ real points. Each convex oval gives a real bitangent intersecting the quartic in $2$ complex conjugate points. The conclusion is that a real plane quartic with $4$ smooth ovals in the real projective plane has $28$ bitangents, and therefore this plane quartic curve can have no complex singular points. In turn this implies that the moduli space of maximal real quartics is a contractible orbifold, and is in fact an open convex polytope modulo an action of $S_4$. 
The same phenomenon holds for the moduli space of maximal real octics, which is again a contractible orbifold. However nonmaximal real octics with a smooth real locus might have complex singular points, and a similar phenomenon is to be expected for nonmaximal real quartic curves. 
\end{rmk}

\begin{figure}
\centering
\subfigure[$t=0$]{
\includegraphics[width=5cm,trim= 6 6 6 6,clip]{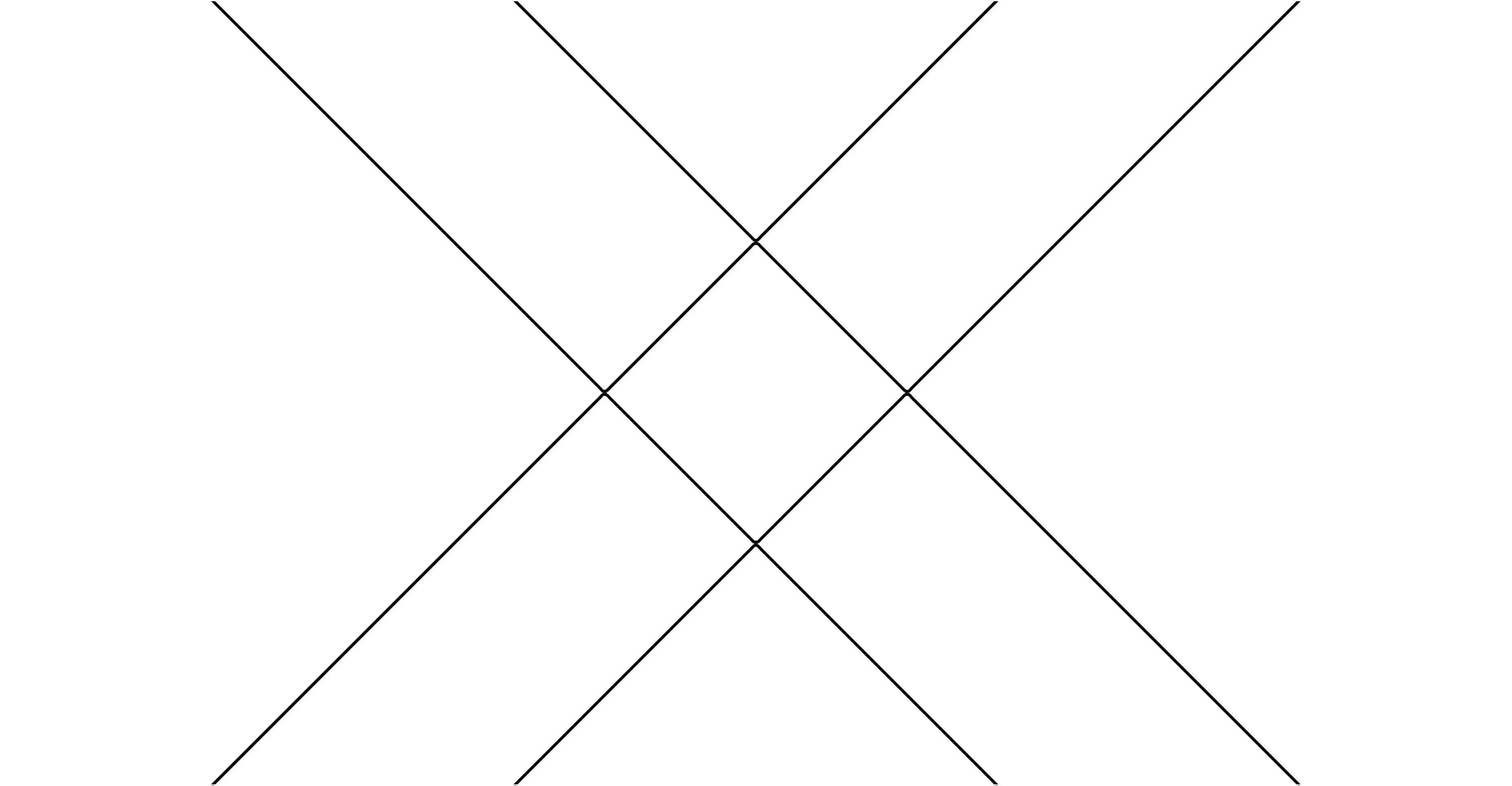} 
}
\subfigure[$t=\frac{1}{2}$]{
\includegraphics[width=5cm,trim= 6 6 6 6,clip]{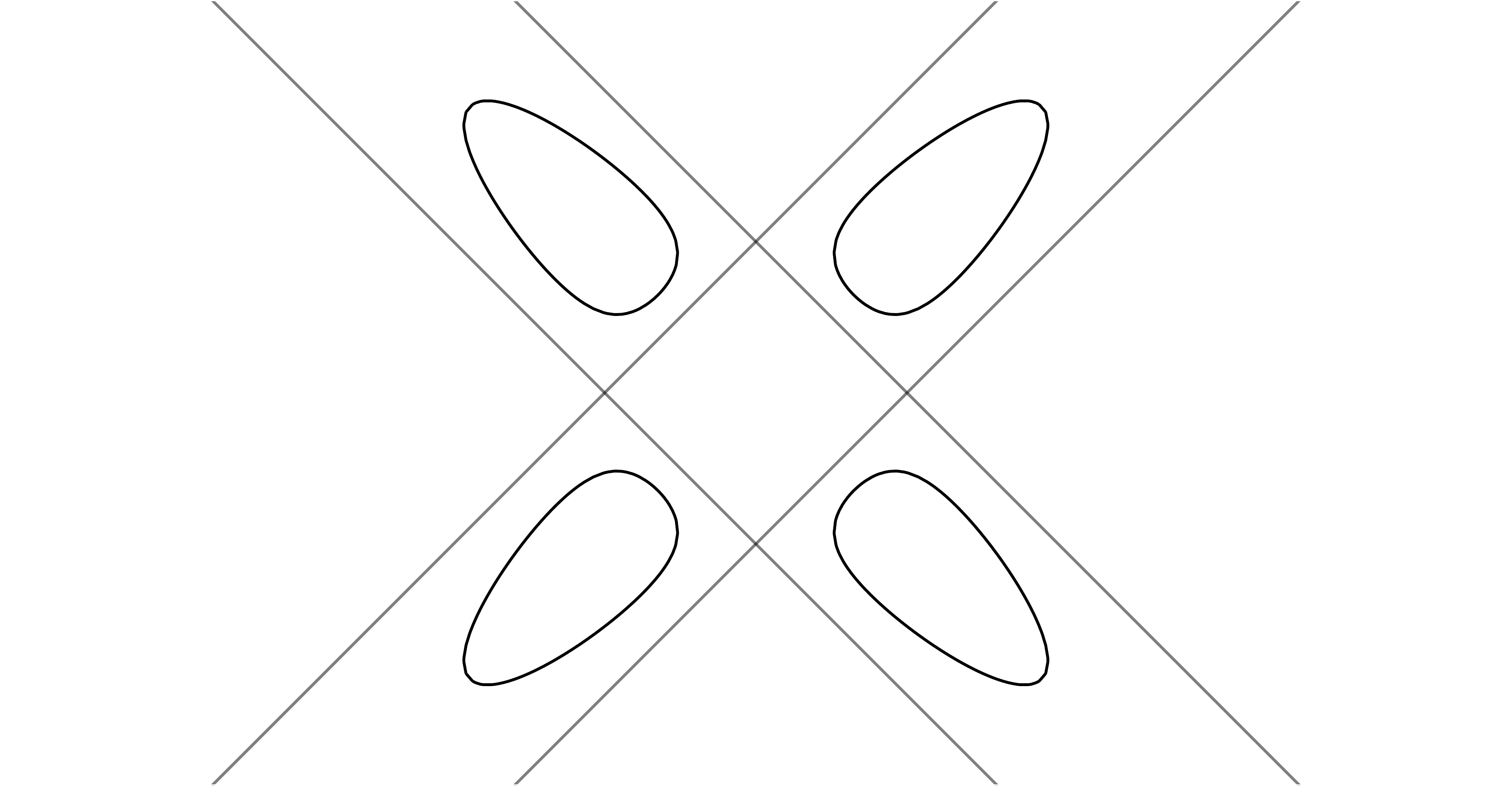} 
}
\subfigure[$t=1$]{
 \includegraphics[width=5cm,trim= 6 6 6 6,clip]{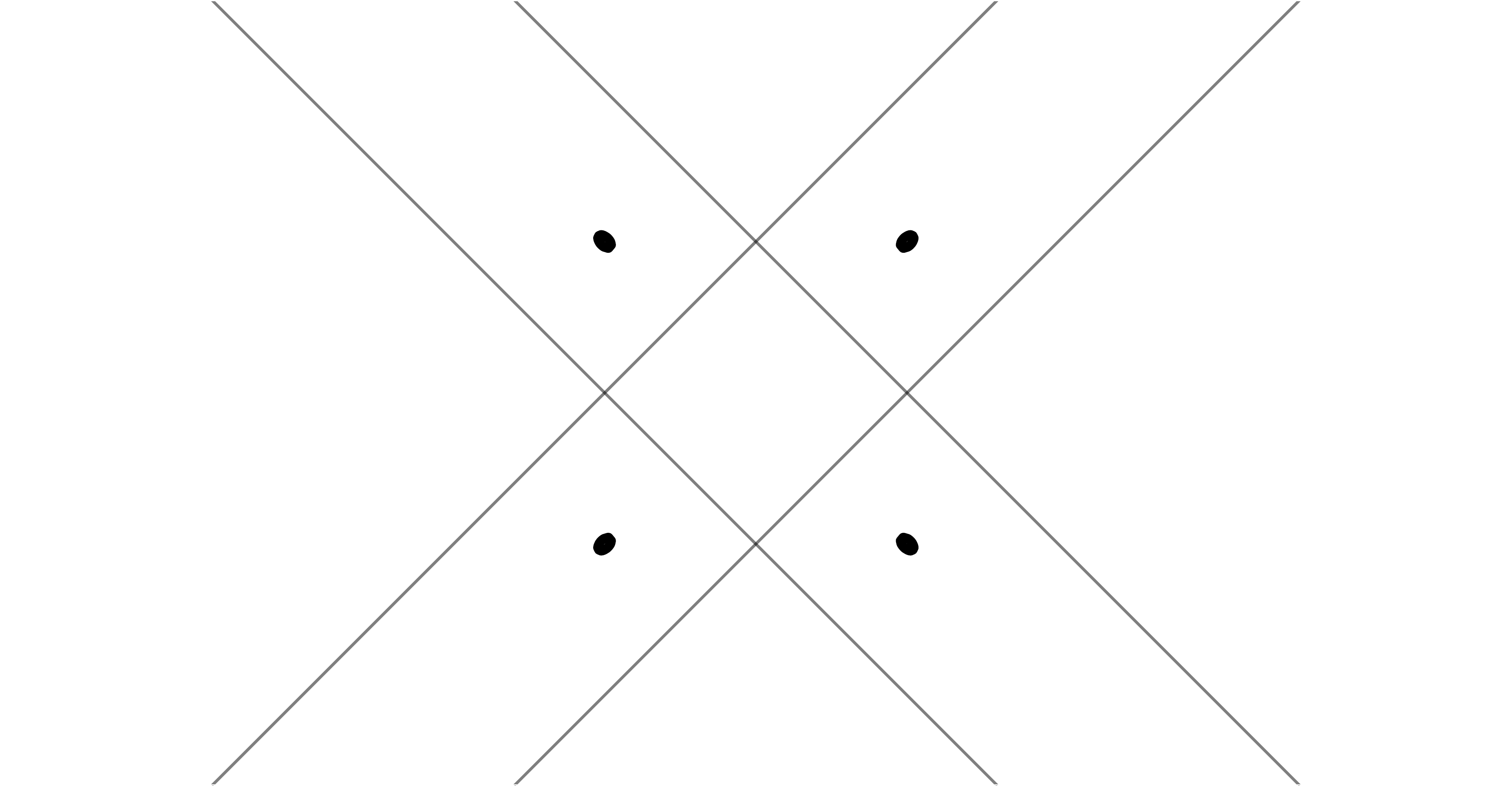} 
 }
\caption{The one-parameter family of quartic curves $C_t$}
\end{figure}

\begin{rmk}
It would be interesting to also describe the Weyl chambers of the other five components of the moduli space of smooth real plane quartic curves. A similar question can be asked for the other components of the moduli space of smooth real binary octics. For the component that corresponds to binary octics with six points real and one pair of complex conjugate points we managed to compute by hand the Coxeter diagram of this chamber. The result was already much more complicated then the diagram of the polytope $C_5$ of of Figure \ref{CoxDiag}. This leads us to believe that the Coxeter diagrams of the remaining five components of the moduli space of smooth real plane quartics will be even more complicated. Computing them would require implementing our version of Vinberg's algorithm in a computer. We expect that this will produce complicated Coxeter diagrams that do not provide much insight. 
\end{rmk}

\section*{Appendix: Involutions in Coxeter groups}\label{SectInvCox}

In this Appendix we will determine the conjugation classes of involutions in the Weyl group of type $E_7$. Weyl groups can be realized as finite Coxeter groups. The classification of conjugacy classes of involutions in a Coxeter group was done by Richardson \cite{Richardson} and Springer \cite{Springer}. Before this the classification of conjugacy classes of elements of finite Coxeter groups was obtained by Carter \cite{Carter}. We will give a brief overview of these results. 

\begin{dfn}\label{Coxeter}A Coxeter system is a pair $(W,S)$ with $W$ a group presented by a finite set of generators $S=\{s_1,\ldots,s_r\}$ subject to relations
\begin{align*}
(s_is_j)^{m_{ij}}=1 \quad \text{with} \quad 1\leq i,j \leq r
\end{align*}
where $m_{ii}=1$ and $m_{ij}=m_{ji}$ are integers $\geq 2$. We also allow $m_{ij}=\infty$ in which case there is no relation between $s_i$ and $s_j$. These relations are encoded by the Coxeter graph of $(W,S)$. This is a graph with $r$ nodes labeled by the generators. Nodes $i$ and $j$ are not connected if $m_{ij}=2$ and are connected by an edige if $m_{ij}\geq 3$ with mark $m_{ij}$ if $m_{ij} \geq 4$.
\end{dfn} 

For a Coxeter system $(W,S)$ we define an action of the group $W$ on the real vector space $V$ with basis $\{e_s\}_{s\in S}$. First we define a symmetric bilinear form $B$ on $V$ by the expression
\[ B(e_i,e_j) = -2 \cos \left(\frac{\pi}{m_{ij}}\right). \]
Then for each $s_i\in S$ the reflection: $s_i(x)=x-B(e_i,x)e_i$ preserves this form $B$. In this way we obtain a homomorphism $W\rightarrow GL(V)$ called the geometric realization of $W$. For each subset $I \subseteq S$ we can form the standard parabolic subgroup $W_I<W$ generated by the elements $\{s_i ; i \in I \}$ acting on the subspace $V_I$ generated by $\{e_i\}_{i\in I}$. We say that $W_I$ (or also $I$) satisfies the $(-1)$-condition if there is a $w_I\in W_I$ such that $w_I \cdot x = -x$ for all $x\in V_I$. The element $w_I$ necessarily equals the longest element of $(W_I,S_I)$. This implies in particular that $W_I$ is finite. Let $I,J\subseteq S$, we say that $I$ and $J$ are $W$-equivalent if there is a $w\in W$ that maps $\{e_i \}_{i \in I}$ to $\{ e_j\}_{j \in J}$. Now we can formulate the main theorem of \cite{Richardson}.

\begin{thm}[Richardson]Let $(W,S)$ be a Coxeter system and let $\mathcal{J}$ be the set of subsets of $S$ that satisfy the $(-1)$-condition. Then: 
\begin{enumerate}
\item If $c \in W$ is an involution, then $c$ is conjugate in $W$ to $w_I$ for some $I \in \mathcal{J}$.
\item Let $I,J \in \mathcal{J}$. The involutions $w_I$ and $w_J$ are conjugate in $W$ if and only if $I$ and $J$ are $W$-equivalent.
\end{enumerate}
\end{thm}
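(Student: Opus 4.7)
The plan is to prove the two parts separately, using induction on Coxeter length for part~(1) and a geometric argument via $(-1)$-eigenspaces for part~(2).

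For part~(1), I would induct on the Coxeter length $\ell(c)$ of the involution $c \in W$. The base case $c=1$ is trivial with $I=\emptyset$. For the inductive step, pick some $s \in S$ with $\ell(sc)<\ell(c)$; such an $s$ exists because $c\neq 1$. Split into two cases based on whether $s$ commutes with $c$. If $sc=cs$, then $c':=sc$ is an involution of strictly smaller length commuting with $s$. One then applies the induction hypothesis, taking care — either by working inside the centraliser $Z_W(s)$ or by a deletion-condition argument — to arrange that the conjugating element $u$ satisfies $us=su$, so that $c'=u w_{I'} u^{-1}$ with $s\notin I'$ and $s$ commuting with every simple reflection indexed by $I'$. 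Then $c=sc'$ is conjugate by the same $u$ to $s w_{I'}=w_{I' \cup \{s\}}$, and the $(-1)$-condition for $I'\cup\{s\}$ follows from that for $I'$ together with the commutation with $s$. If instead $sc\neq cs$, then because $c=c^{-1}$ also satisfies $\ell(cs)<\ell(c)$, the strong exchange condition forces $\ell(scs)=\ell(c)-2$, so the conjugate involution $scs$ has strictly smaller length and induction applies.

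For part~(2), the forward direction is immediate: if $w\in W$ sends $\{e_i\}_{i\in I}$ bijectively to $\{e_j\}_{j\in J}$, then by Definition~\ref{Coxeter} the defining Coxeter relations are preserved, $w W_I w^{-1}=W_J$, and since $w_I$ and $w_J$ are characterised as the unique longest elements of their respective parabolic subgroups, conjugation by $w$ carries $w_I$ to $w_J$. For the converse, suppose $u w_I u^{-1}=w_J$ for some $u \in W$. The hypothesis $I\in\mathcal{J}$ means that $w_I$ acts as $-1$ on $V_I$; moreover $w_I$ is the longest element of the finite Coxeter system $(W_I,\{s_i\}_{i\in I})$, hence acts trivially on the orthogonal complement $V_I^{\perp}$ with respect to $B$. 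Thus the $(-1)$-eigenspace of $w_I$ in $V$ equals $V_I$ exactly, and similarly for $w_J$ and $V_J$. Conjugation therefore forces $u(V_I)=V_J$. The roots of $W$ lying in $V_J$ form the root system of the parabolic $W_J$, so $u$ sends the simple system $\{e_i\}_{i\in I}$ to some set of roots inside $V_J$; composing with a suitable element of $W_J$, we may assume this set is exactly $\{e_j\}_{j\in J}$, giving the required $W$-equivalence.

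The main obstacle, as I see it, is the commuting case of part~(1): one must arrange the conjugator $u$ so that it genuinely commutes with $s$, so that the appended generator $s$ yields an honest subset of $S$ rather than a $W$-conjugate of one. The naive induction only gives conjugacy of $c'$ to some $w_{I'}$, and a priori the conjugator could move $s$ to another reflection. Richardson's fix is a careful deletion-style argument inside $Z_W(s)$, and this is really the technical heart of the theorem. A secondary, milder obstacle in the converse of part~(2) is to verify that the set of roots of $W$ contained in $V_J$ is exactly the root system of $W_J$ and not of some larger reflection subgroup; this is standard in the crystallographic case but requires invoking the general theory of reflection subgroups of Coxeter groups (as developed by Deodhar and Dyer) in the present generality.
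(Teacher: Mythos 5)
First, a remark on the comparison itself: the paper does not prove this theorem at all --- it is quoted verbatim from Richardson's article \cite{Richardson} as background for the Appendix --- so your proposal must be measured against Richardson's published argument. Your part (2) is essentially that argument, and it is correct: since $I\in\mathcal{J}$ forces $W_I$ to be finite, the form $B$ restricted to $V_I$ is positive definite, so $V=V_I\oplus V_I^{\perp}$, and $w_I$ acts as $-1$ on $V_I$ and trivially on $V_I^{\perp}$; hence the $(-1)$-eigenspace of $w_I$ is exactly $V_I$, conjugation gives $u(V_I)=V_J$, and adjusting by an element of $W_J$ carries $u(\{e_i\}_{i\in I})$ to $\{e_j\}_{j\in J}$. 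Your secondary worry is unfounded: the fact that the roots of $W$ lying in $V_J$ are precisely the roots of $W_J$ is elementary for \emph{standard} parabolics, because every root is written uniquely in the basis $\{e_s\}_{s\in S}$ with coefficients all of one sign, so lying in $V_J$ forces support in $J$; no Deodhar--Dyer theory is needed.

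The genuine gap is in the commuting case of part (1), exactly where you located it --- but the repair you gesture at does not exist, and it is not what Richardson does. His proof of (1) is not a length induction with a centraliser/deletion argument: he uses the action on the Tits cone, placing the involution $c$ in a \emph{minimal} finite parabolic subgroup fixing a point and showing that minimality forces $c$ to act as $-1$ there (equivalently, one may take $c$ of minimal length in its conjugacy class and show $c(\alpha_s)=-\alpha_s$ for every descent $s$). Your inductive step, as designed, can only ever append a generator $s$ that is \emph{orthogonal} to $I'$, so the subsets it constructs are orthogonal unions of $A_1$'s; it can never reach the irreducible types $B_n$, $D_{2n}$, $E_7$, $E_8$, $F_4$, $H_3$, $H_4$, $I_2(2p)$ in the list (\ref{list}), where $w_I=-1$ on $V_I$ although the generators do not commute. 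A concrete failure: in $W(B_2)=\langle s,t \mid s^2=t^2=(st)^4=1\rangle$ take $c=w_0=(st)^2$, which is central, so only your commuting case is available. Then $c'=sc=tst=tst^{-1}$ is conjugate to $s$ and to nothing else in $\mathcal{J}$ (for even bond, $s$ and $t$ are not conjugate in $B_2$), so the only possible $I'$ is $\{s\}$: your requirement $s\notin I'$, let alone that $s$ commute with the reflections of $I'$, is unachievable, while the correct answer is $I=\{s,t\}$ of irreducible type $B_2$ with $m(s,t)=4$. Thus the induction stalls precisely on the central involutions that are the heart of the theorem, and closing the gap requires Richardson's different idea, not a more careful choice of conjugator.
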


This theorem reduces the problem of finding all conjugacy classes of involutions in $W$ to finding all $W$-equivalent subsets in $S$ satisfying the $(-1)$-condition. First we determine which subsets $I\subseteq S$ satisfy the $(-1)$-condition, then we present an algorithm that determines when two subsets $I,J\subseteq S$ are $W$-equivalent. If $(W_I,S_I)$ is irreducible and satisfies the $(-1)$-condition then it is of one of the following types
\begin{align}\label{list}
A_1,B_n,D_{2n},E_7,E_8,F_4,G_2,H_3,H_4,I_2(2p)
\end{align}
with $n,p\in \mathbb{N}$ and $p\geq 4$. If $(W_I,S_I)$ is reducible and satisfies the $(-1)$-condition then $W_I$ is the direct product of irreducible, finite standard parabolic  subgroups $(W_i,S_i)$ from (\ref{list}). The Coxeter diagrams of the $(W_i,S_i)$ occur as disjoint subdiagrams of the types in the list of the diagram of $(W,S)$. The element $w_I$ is the product of the $w_{I_i}$ which act as $-1$ on the $V_{I_i}$. Now let $K \subseteq S$ be of finite type and let $w_K$ be the longest element of $(W_K,S_K)$. The element $\tau_K=-w_K$ defines a diagram involution of the Coxeter diagram of $(W_K,S_K)$ which is nontrivial if and only if $w_K \neq -1$. If $I,J\subseteq K$ are such that $\tau_K I = J$ then $I$ and $J$ are $W$-equivalent. To see this, observe that $w_Kw_I \cdot I = w_K \cdot (-I) = \tau_K I=J$. Now we define the notion of elementary equivalence.

\begin{dfn}We say that two subsets $I,J\subseteq S$ are elementary equivalent, denoted by $I \vdash J$, if $\tau_K I = J$ with $K=I\cup \{\alpha \} = J\cup \{ \beta \}$ for some $\alpha,\beta \in S$.
\end{dfn}

It is proved in \cite{Richardson} that $I$ and $J$ are $W$-equivalent if and only if they are related by a chain of elementary equivalences: $I = I_1 \vdash I_2 \vdash \ldots \vdash I_n = J$. This provides a practical algorithm to determine all the conjugation classes of involutions in a given Coxeter group $(W,S)$ using its Coxeter diagram:
\begin{enumerate}
\item Make a list of all the subdiagrams of the Coxeter diagram of $(W,S)$ that satisfy the $(-1)$-condition. These are exactly the disjoint unions of diagrams in the list (\ref{list}). Every involution in $W$ is conjugate to $w_K$ with  $K$ a subdiagram in this list. 
\item Find out which subdiagrams of a given type are $W$-equivalent by using chains of elementary equivalences.
\end{enumerate}

\begin{ex}[$E_7$]\label{invE7}We use the procedure described above to determine all conjugation classes of involutions in the Weyl group of type $E_7$. This result will be used many times later on. Since $W_7$ contains the element $-1$ the conjugation classes of involutions come in pairs $\{u,-u\}$. We label the vertices of the Coxeter diagram as in Figure \ref{labele7}

\begin{figure}
\begin{center}
\begin{tikzpicture}
  \tikzstyle{every node}=[circle,draw]  
    \node[label=60:$1$] (1) at ( 0,0) {};
    \node[label=60:$2$] (2) at ( 1,0) {};    
    \node[label=60:$3$] (3) at ( 2,0) {};
    \node[label=60:$4$] (4) at ( 3,0) {};
    \node[label=60:$7$] (7) at ( 2,1) {};
    \node[label=60:$5$] (5) at ( 4,0) {};
    \node[label=60:$6$] (6) at ( 5,0) {}; 
    \draw [-] (1) -- (2) -- (3) -- (4) -- (5) -- (6);
    \draw [-] (3) -- (7);   
\end{tikzpicture}
\caption{The labelling of the nodes of the $E_7$ diagram}
\label{labele7}
\end{center}
\end{figure}
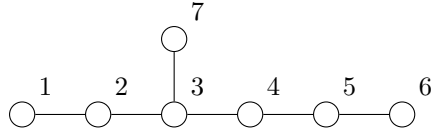

It turns out that all involutions of a given type are equivalent with the exception of type $A_1^3$. In that case there are two nonequivalent involutions as seen in Figure \ref{A13}. The types of involutions that occur are:
\begin{equation}\label{pairs}
\{ 1,E_7\} \ ,\ \{A_1,D_6\} \ ,\ \{A_1^2,D_4A_1\} \ ,\ \{A_1^3,A_1^4\} \ ,\ \{D_4,A_1^{3\prime}\}.
\end{equation}

\begin{figure}
\centering
\subfigure{
\begin{tikzpicture}
	\node[circ] (1) at (0,0) {};
	\node[blackcirc] (2) at (1,0) {};
	\node[circ] (3) at (2,0) {};
	\node[blackcirc] (4) at (3,0) {};
	\node[circ] (5) at (4,0) {};
	\node[circ] (6) at (5,0) {};
	\node[blackcirc] (7) at (2,1) {};	
	\draw [-] (1) -- (2) -- (3) -- (4) -- (5) -- (6);
	\draw [-] (3) -- (7);	
\end{tikzpicture} 
}
\subfigure{
\begin{tikzpicture}
	\node[circ] (1) at (0,0) {};
	\node[circ] (2) at (1,0) {};
	\node[circ] (3) at (2,0) {};
	\node[blackcirc] (4) at (3,0) {};
	\node[circ] (5) at (4,0) {};
	\node[blackcirc] (6) at (5,0) {};
	\node[blackcirc] (7) at (2,1) {};	
	\draw [-] (1) -- (2) -- (3) -- (4) -- (5) -- (6);
	\draw [-] (3) -- (7);	
\end{tikzpicture} 
}
\caption[test]{The involutions $A_1^3$ (left) and $A_1^{3\prime}$ (right).}
\label{A13}
\end{figure}
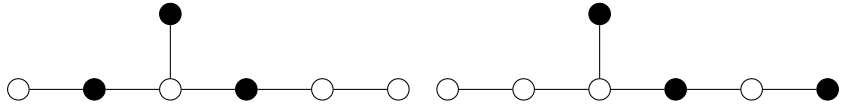

For example, consider the two subdiagrams of type $A_1$ with vertices $\{1\}$ and $\{2\}$. The diagram automorphism $\tau_{\{1,2\}}$ which is of type $A_2$ exchanges the vertices $\{1\}$ and $\{2\}$, so they are elementary equivalent. One shows in a similar way that all diagrams of type $A_1$ are equivalent. 
\end{ex}

\bibliographystyle{plain}
\bibliography{master}

\end{document}